\newcommand{\GamHat}{\ensuremath{\widehat{\Gamma}}}
\newcommand{\gamhat}{\ensuremath{\widehat{\gamma}}}
\newcommand{\SigHat}{\ensuremath{\widehat{\Sigma}}}
\newcommand{\ThetaHat}{\ensuremath{\widehat{\Theta}}}
\newcommand{\betahat}{\ensuremath{\widehat{\beta}}}
\newcommand{\betastar}{\ensuremath{\beta^*}}
\newcommand{\Loss}{\ensuremath{\mathcal{L}}}
\newcommand{\nutil}{\ensuremath{\widetilde{\nu}}}
\newcommand{\ball}{\ensuremath{\mathbb{B}}}
\newcommand{\betatil}{\ensuremath{\widetilde{\beta}}}
\newcommand{\Lossbar}{\ensuremath{\widebar{\Loss}}}
\newcommand{\xhat}{\ensuremath{\widehat{x}}}
\newcommand{\epsstat}{\ensuremath{\epsilon_{\mbox{\tiny{stat}}}}}
\newcommand{\epsbar}{\ensuremath{\widebar{\epsilon}}}
\newcommand{\rhotil}{\ensuremath{\widetilde{\rho}}}
\newcommand{\scriptE}{\ensuremath{\mathcal{E}_\numobs}}
\newcommand{\scriptF}{\ensuremath{\mathcal{F}}}
\newcommand{\Deltatil}{\ensuremath{\widetilde{\Delta}}}
\newcommand{\Thetastar}{\ensuremath{\Theta^*}}
\newcommand{\Thetatil}{\ensuremath{\widetilde{\Theta}}}
\newcommand{\scriptT}{\ensuremath{{\mathcal{T}}}}
\newcommand{\gtil}{\ensuremath{\widetilde{g}}}
\newcommand{\ghat}{\ensuremath{\widehat{g}}}
\newcommand{\opnorm}[1]{\left|\!\left|\!\left|{#1}\right|\!\right|\!\right|}
\newcommand{\Cov}{\ensuremath{\operatorname{Cov}}}
\newcommand{\E}{\ensuremath{\mathbb{E}}}
\newcommand{\mprob}{\ensuremath{\mathbb{P}}}
\newcommand{\numobs}{\ensuremath{n}}
\newcommand{\pdim}{\ensuremath{p}}
\newcommand{\SIDE}{\ensuremath{g}}
\newcommand{\Zdata}{\ensuremath{Z_1^\numobs}}
\newcommand{\PopLoss}{\ensuremath{\Loss}}
\newcommand{\EmpLoss}{\ensuremath{\Loss_\numobs}}
\newcommand{\myrho}{\ensuremath{\rho_\lambda}}
\newcommand{\mupar}{\ensuremath{\mu}}
\newcommand{\doubrho}{\ensuremath{\rho_{\lambda, \mupar}}}
\newcommand{\scriptTBar}{\ensuremath{\widebar{\scriptT}}}
\newcommand{\Zspace}{\ensuremath{\mathcal{Z}}}
\newcommand{\betait}[1]{\ensuremath{\beta^{#1}}}
\newcommand{\iter}{\ensuremath{t}}
\newcommand{\kdim}{\ensuremath{k}}
\newcommand{\Aevent}{\ensuremath{\mathcal{A}}}
\newcommand{\frobnorm}[1]{\ensuremath{\matsnorm{#1}{F}}}
\newcommand{\myvec}{\ensuremath{\mbox{vec}}}
\newcommand{\Aregion}{\ensuremath{\mathbb{A}}}
\newcommand{\HACK}{\ensuremath{\varphi(\numobs, \pdim, \kdim)}}
\newcommand{\SIDESPEC}{\ensuremath{\SIDE_{\lambda, \mupar}}}
\newcommand{\MartinPingPong}{\ensuremath{\mathbb{V}}}
\newcommand{\Annoying}{\mathcal{E}}
\newcommand{\EmpLossBar}{\ensuremath{\widebar{\mathcal{L}}_\numobs}}
\newcommand{\EmpLossBarSub}{\ensuremath{\bar{\mathcal{L}}_\numobs}}
\newcommand{\CovX}{\ensuremath{\Sigma}}
\newcommand{\SQUIRREL}{\psi(\numobs, \pdim, \epsilon)}
\newcommand{\CHICKA}{\ensuremath{\upsilon(k, p, n)}} 
 \newtheorem{mytheorem}{Theorem}
 \newtheorem{mycorollary}{Corollary}
\newtheorem{assumption}{Assumption}
\newtheorem{mylemma}{Lemma}
 \newtheorem{myproposition}{Proposition}
\newlength{\widebarargwidth}
\newlength{\widebarargheight}
\newlength{\widebarargdepth}
\DeclareRobustCommand{\widebar}[1]{%
  \settowidth{\widebarargwidth}{\ensuremath{#1}}%
  \settoheight{\widebarargheight}{\ensuremath{#1}}%
  \settodepth{\widebarargdepth}{\ensuremath{#1}}%
  \addtolength{\widebarargwidth}{-0.3\widebarargheight}%
  \addtolength{\widebarargwidth}{-0.3\widebarargdepth}%
  \makebox[0pt][l]{\hspace{0.3\widebarargheight}%
    \hspace{0.3\widebarargdepth}%
    \addtolength{\widebarargheight}{0.3ex}%
    \rule[\widebarargheight]{0.95\widebarargwidth}{0.1ex}}%
  {#1}}
\newcommand{\matsnorm}[2]{|\!|\!| #1 | \! | \!|_{{#2}}}
\newenvironment{carlist}
 {\begin{list}{$\bullet$}
 {\setlength{\topsep}{0in} \setlength{\partopsep}{0in}
  \setlength{\parsep}{0in} \setlength{\itemsep}{\parskip}
  \setlength{\leftmargin}{0.07in} \setlength{\rightmargin}{0.08in}
  \setlength{\listparindent}{0in} \setlength{\labelwidth}{0.08in}
  \setlength{\labelsep}{0.1in} \setlength{\itemindent}{0in}}}
 {\end{list}}
\newcommand{\bcar}{\begin{carlist}}
\newcommand{\ecar}{\end{carlist}}
\long\def\comment#1{}
\def\@cite#1#2{[\if@tempswa #2 \fi #1]}
\long\def\barenote#1{
    \insert\footins{\footnotesize
    \interlinepenalty\interfootnotelinepenalty 
    \splittopskip\footnotesep
    \splitmaxdepth \dp\strutbox \floatingpenalty \@MM
    \hsize\columnwidth \@parboxrestore
    {\rule{\z@}{\footnotesep}\ignorespaces
      #1\strut}}}
\newcommand{\Ind}{\ensuremath{\mathbb{I\,}}}
\newcommand{\bit}{\begin{itemize}}
\newcommand{\eit}{\end{itemize}}
\newcommand{\ben}{\begin{enumerate}}
\newcommand{\een}{\end{enumerate}}
\newcommand{\bear}{\begin{eqnarray}}
\newcommand{\eear}{\end{eqnarray}}
\newcommand{\etabar}{\ensuremath{\bar{\eta}}}
\newcommand{\supp}{\ensuremath{\operatorname{supp}}}
\newcommand{\order}{{\mathcal{O}}}
\newcommand{\inprod}[2]{\ensuremath{\langle #1 , \, #2 \rangle}}
\newcommand{\Exs}{\ensuremath{{\mathbb{E}}}}
\newcommand{\beq}{\begin{quotation}}
\newcommand{\enq}{\end{quotation}}
\newcommand{\estart}{\begin{equation}}
\newcommand{\eend}{\end{equation}}
\newcommand{\widgraph}[2]{\includegraphics[keepaspectratio,width=#1]{#2}}
\newcommand{\defn}{\ensuremath{:  =}}
\newcommand{\bec}{\begin{center}}
\newcommand{\enc}{\end{center}}
\newcommand{\beit}{\begin{itemize}}
\newcommand{\enit}{\end{itemize}}
\newcommand{\been}{\begin{enumerate}}
\newcommand{\enen}{\end{enumerate}}
\newcommand{\comsl}{\begin{slide}}
\newcommand{\comspor}{\begin{slide*}}
\newcommand{\comsld}[2]{\begin{slide}[#1,#2]}
\newcommand{\comspord}[2]{\begin{slide*}[#1,#2]}
\newcommand{\mendsl}{\end{slide}}
\newcommand{\mendspo}{\end{slide*}}
\newcommand{\real}{\ensuremath{{\mathbb{R}}}}
\DeclareMathOperator{\var}{var}
\DeclareMathOperator{\cov}{cov}
\DeclareMathOperator{\trace}{trace}
\DeclareMathOperator{\sign}{sign}
\begin{document}

\title{Regularized $M$-estimators with Nonconvexity: Statistical and Algorithmic Theory for Local Optima}

\author{%
\name Po-Ling Loh \email loh@wharton.upenn.edu \\
\addr Department of Statistics \\
The Wharton School\\
466 Jon M.\ Huntsman Hall \\
3730 Walnut Street \\
Philadelphia, PA 19104, USA
\AND
\name Martin J.\ Wainwright \email wainwrig@berkeley.edu\\
\addr Departments of EECS and Statistics\\
263 Cory Hall \\
University of California\\
Berkeley, CA 94720, USA
}

\editor{Nicolai Meinshausen}

\maketitle

\begin{abstract}%
We provide novel theoretical results regarding local optima of
regularized $M$-estimators, allowing for nonconvexity in both loss and
penalty functions. Under restricted strong convexity on the loss and
suitable regularity conditions on the penalty, we prove that \emph{any
  stationary point} of the composite objective function will lie
within statistical precision of the underlying parameter vector. Our
theory covers many nonconvex objective functions of interest,
including the corrected Lasso for errors-in-variables linear models;
regression for generalized linear models with nonconvex penalties such
as SCAD, MCP, and capped-$\ell_1$; and high-dimensional graphical
model estimation. We quantify statistical accuracy
by providing bounds on the $\ell_1$-, $\ell_2$-, and prediction
error between stationary points and the population-level optimum. We also propose a simple modification of composite gradient
descent that may be used to obtain a near-global optimum within
statistical precision $\epsstat$ in $\log(1/\epsstat)$ steps, which is
the fastest possible rate of any first-order method.  We provide
simulation studies illustrating the sharpness of our theoretical
results.

\end{abstract}

\begin{keywords}
high-dimensional statistics, $M$-estimation, model selection,
nonconvex optimization, nonconvex regularization
\end{keywords}


\section{Introduction}

Although recent years have brought about a flurry of work on
optimization of convex functions, optimizing nonconvex functions is in
general computationally intractable~\citep{NesNem87,Vav95}. Nonconvex
functions may possess local optima that are not global optima, and
iterative methods such as gradient or coordinate descent may terminate
undesirably in local optima. Unfortunately, standard statistical
results for nonconvex $M$-estimators often only provide guarantees for
\emph{global} optima. This leads to a significant gap between theory
and practice, since computing global optima---or even near-global
optima---in an efficient manner may be extremely difficult in
practice. Nonetheless, empirical studies have shown that local optima
of various nonconvex $M$-estimators arising in statistical problems
appear to be well-behaved (e.g.,~\citealp{BreHua11}). This type of
observation is the starting point of our work.

A key insight is that nonconvex functions occurring in statistics
are not constructed adversarially, so that ``good behavior'' might be
expected in practice. Our recent work~\citep{LohWai11a} confirmed this
intuition for one specific case: a modified version of the Lasso
applicable to errors-in-variables regression.  Although the Hessian of
the modified objective has many negative eigenvalues in the
high-dimensional setting, the objective function resembles a strongly
convex function when restricted to a cone set that includes the stationary points of the objective. This allows us to establish bounds on
the statistical and optimization error.

Our current paper is framed in a more general setting, and we focus on
various $M$-estimators coupled with (nonconvex) regularizers of
interest. On the statistical side, we establish bounds on the distance
between \emph{any local optimum} of the empirical objective and the
unique minimizer of the population risk. Although the nonconvex
functions may possess multiple local optima (as demonstrated in
simulations), our theoretical results show that all local optima are
essentially as good as a global optima from a statistical
perspective. The results presented here subsume our previous
work~\citep{LohWai11a}, and our present proof techniques are much more
direct.

Our theory also sheds new light on a recent line of work involving the
nonconvex SCAD and MCP regularizers~\citep{FanLi01, BreHua11, Zha12,
  ZhaZha12}. Various methods previously proposed for nonconvex
optimization include local quadratic approximation
(LQA)~\citep{FanLi01}, minorization-maximization (MM)~\citep{HunLi05},
local linear approximation (LLA)~\citep{ZouLi08}, and coordinate
descent~\citep{BreHua11, MazEtal11}.  However, these methods may
terminate in local optima, which were not previously known to be
well-behaved. In a recent paper, \cite{ZhaZha12} provided statistical
guarantees for global optima of least-squares linear regression with
nonconvex penalties and showed that gradient descent starting from a
Lasso solution would terminate in specific local
minima. \cite{FanEtal13} also showed that if the LLA algorithm is
initialized at a Lasso optimum satisfying certain properties, the
two-stage procedure produces an oracle solution for various nonconvex
penalties. Finally, \cite{CheGu13} showed that specific local optima
of nonconvex regularized least-squares problems are stable, so
optimization algorithms initialized sufficiently close by will converge
to the same optima. See the survey paper~\citep{ZhaZha12} for a more
complete overview of related work.

In contrast, our paper is the first to establish
appropriate regularity conditions under which \emph{all stationary points} (including both local and global optima) lie within a small ball of the population-level
minimum. Thus, standard first-order methods such as projected and
composite gradient descent~\citep{Nes07} will converge to stationary points that
lie within statistical error of the truth, eliminating the need for
specially designed optimization algorithms that converge to specific
local optima. Our work provides an important contribution to the growing literature on the tradeoff between statistical accuracy and optimization efficiency in high-dimensional problems, establishing that certain types of nonconvex $M$-estimators arising in statistical problems possess stationary points that both enjoy strong statistical guarantees and may be located efficiently. For a higher-level description of contemporary problems involving statistical and optimization tradeoffs, see~\cite{Wai14} and the references cited therein.

Figure~\ref{FigTeaser} provides an illustration of the
type of behavior explained by the theory in this paper. Panel (a)
shows the behavior of composite gradient descent for a form of logistic
regression with the nonconvex SCAD~\citep{FanLi01} as a regularizer:
the red curve shows the \emph{statistical error}, namely the
$\ell_2$-norm of the difference between a stationary point and the
underlying true regression vector, and the blue curve shows the
\emph{optimization error}, meaning the difference between the iterates
and a given global optimum.  As shown by the blue
curves, this problem possesses multiple local optima, since the algorithm converges
to different final points depending on the initialization.  However,
as shown by the red curves, the statistical error of each
local optimum is very low, so they are all essentially comparable
from a statistical point of view.
\begin{figure}[h]
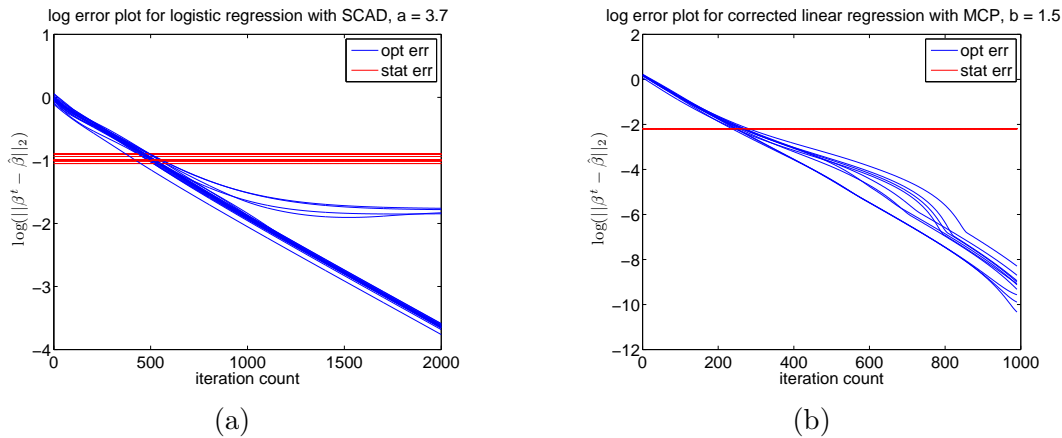

\begin{center}
\begin{tabular}{ccc}
      \widgraph{.45\textwidth}{logerrs_logistic_scad_3} & &
      \widgraph{.45\textwidth}{logerrs_lin_mcp_3} \\
(a) & & (b)
\end{tabular}
\caption{Plots of the optimization error (blue curves) and statistical
  error (red curves) for a modified form of composite gradient
  descent, applicable to problems that may involve nonconvex cost
  functions and regularizers. (a) Plots for logistic regression with
  the nonconvex SCAD regularizer. (b) Plots for a corrected form of
  least squares (a nonconvex quadratic program) with the nonconvex MCP
  regularizer.}
\label{FigTeaser}
\end{center}
\end{figure}
Panel (b) exhibits the same behavior for a problem in which both the
cost function (a corrected form of least-squares suitable for missing
data, described in~\citealp{LohWai12}) and the regularizer (the MCP
function, described in~\citealp{Zha12}) are nonconvex.  Nonetheless, as guaranteed by
our theory, we still see the same qualitative behavior of the
statistical and optimization error.  Moreover, our theory also
predicts the geometric convergence rates that are apparent in these
plots. More precisely, under the same sufficient conditions for
statistical consistency, we show that a modified form of composite
gradient descent only requires $\log(1/\epsstat)$ steps to achieve a
solution that is accurate up to the statistical precision $\epsstat$,
which is the rate expected for \emph{strongly convex}
functions. Furthermore, our techniques are more generally applicable
than the methods proposed by previous authors and are not restricted
to least-squares or even convex loss functions.

While our paper was under review after its initial arXiv
posting~\citep{LohWai13}, we became aware of an independent line of
related work by~\cite{WanEtal13}. Our contributions are substantially
different, in that we provide sufficient conditions guaranteeing
statistical consistency for \emph{all} local optima, whereas their
work is only concerned with establishing good behavior of successive
iterates along a certain path-following algorithm. In addition, our
techniques are applicable even to regularizers that do not satisfy
smoothness constraints on the entire positive axis (such as
capped-$\ell_1$). Finally, we provide rigorous proofs showing the
applicability of our sufficient condition on the loss function to a
broad class of generalized linear models, whereas the applicability of
their sparse eigenvalue condition to such objectives was not
established.

The remainder of the paper is organized as follows. In Section 2, we
set up basic notation and provide background on nonconvex regularizers
and loss functions of interest. In Section 3, we provide our main
theoretical results, including bounds on $\ell_1$-, $\ell_2$-, and
prediction error, and also state corollaries for special
cases. Section 4 contains a modification of composite gradient descent
that may be used to obtain near-global optima and includes
theoretical results establishing the linear convergence of our
optimization algorithm. Section 5 supplies the results of various
simulations. Proofs are contained in the Appendix. We note that a
preliminary form of the results given here, without any proofs or
algorithmic details, was presented at the NIPS
conference~\citep{LohWai13NIPS}. \\

\textbf{Notation:} For functions $f(n)$ and $g(n)$, we write $f(n) \precsim g(n)$ to mean
that \mbox{$f(n) \le c g(n)$} for some universal constant $c \in (0,
\infty)$, and similarly, $f(n) \succsim g(n)$ when \mbox{$f(n) \ge c'
  g(n)$} for some universal constant $c' \in (0, \infty)$. We write
$f(n) \asymp g(n)$ when $f(n) \precsim g(n)$ and $f(n) \succsim g(n)$
hold simultaneously. For a vector $v \in \real^p$ and a subset $S
\subseteq \{1, \dots, p\}$, we write $v_S \in \real^S$ to denote the
vector $v$ restricted to $S$. For a matrix $M$, we write
$\opnorm{M}_2$ and $\opnorm{M}_F$ to denote the spectral and Frobenius
norms, respectively, and write $\opnorm{M}_{\max} \defn \max_{i,j}
|m_{ij}|$ to denote the elementwise $\ell_\infty$-norm of $M$. For a
function $h: \real^p \rightarrow \real$, we write $\nabla h$ to denote
a gradient or subgradient, if it exists. Finally, for $q, r > 0$, let
$\ball_q(r)$ denote the $\ell_q$-ball of radius $r$ centered around
0. We use the term ``with high probability" (w.h.p.) to refer to
events that occur with probability tending to 1 as $n, p, k
\rightarrow \infty$.  This is a loose requirement, but we will always
take care to write out the expression for the probability explicitly
(up to constant factors) in the formal statements of our theorems and
corollaries below.


\section{Problem Formulation}

In this section, we develop some general theory for regularized
$M$-estimators. We begin by establishing our notation and basic
assumptions, before turning to the class of nonconvex regularizers and
nonconvex loss functions to be covered in this paper.

\subsection{Background}
Given a collection of $\numobs$ samples $\Zdata = \{Z_1, \ldots,
Z_\numobs\}$, drawn from a marginal distribution $\mprob$ over a space
$\Zspace$, consider a loss function $\EmpLoss: \real^\pdim \times
(\Zspace)^\numobs \rightarrow \real$.  The value $\EmpLoss(\beta;
\Zdata)$ serves as a measure of the ``fit'' between a parameter vector
$\beta \in \real^\pdim$ and the observed data.  This empirical loss
function should be viewed as a surrogate to the \emph{population risk
  function} $\Loss:\real^\pdim \rightarrow \real$, given by
\begin{align*}
\PopLoss(\beta) & \defn \Exs_{Z} \big[ \EmpLoss(\beta; \Zdata) \big].
\end{align*}
Our goal is to estimate the parameter vector $\betastar \defn \arg
\min \limits_{\beta \in \real^\pdim} \PopLoss(\beta)$ that minimizes
the population risk, assumed to be unique.

To this end, we consider a regularized $M$-estimator of the form
\begin{align}
\label{EqnNonconvexReg}
\betahat & \in \arg \min_{\SIDE(\beta) \leq R, \; \beta \in \Omega} \left \{
\EmpLoss(\beta; \Zdata) + \myrho(\beta) \right \},
\end{align}
where $\myrho: \real^\pdim \rightarrow \real$ is a \emph{regularizer},
depending on a tuning parameter $\lambda > 0$, which serves to enforce
a certain type of structure on the solution. Here, $R > 0$ is another
tuning parameter that much be chosen carefully to make $\betastar$ a
feasible point. In all cases, we consider regularizers that are
separable across coordinates, and with a slight abuse of notation, we
write
\begin{equation*}
\myrho(\beta) = \sum_{j=1}^\pdim \myrho(\beta_j).
\end{equation*}
Our theory allows for possible nonconvexity in \emph{both} the loss
function $\EmpLoss$ and the regularizer $\myrho$.  Due to this
potential nonconvexity, our $M$-estimator also includes a side
constraint \mbox{$\SIDE: \real^\pdim \rightarrow \real_+$,} which we
require to be a convex function satisfying the lower bound
\mbox{$\SIDE(\beta) \geq \|\beta\|_1$} for all $\beta \in
\real^\pdim$.  Consequently, any feasible point for the optimization
problem~\eqref{EqnNonconvexReg} satisfies the constraint $\|\beta\|_1
\leq R$, and as long as the empirical loss and regularizer are
continuous, the Weierstrass extreme value theorem guarantees that a
global minimum $\betahat$ exists. Finally, our theory also allows for
an additional side constraint of the form $\beta \in \Omega$, where
$\Omega$ is some convex set containing $\betastar$. For the graphical
Lasso considered in Section~\ref{SecGLasso}, we take $\Omega =
\mathcal{S}_+$ to be the set of positive semidefinite matrices; in
settings where such an additional condition is extraneous, we simply
set $\Omega = \real^p$.

\subsection{Nonconvex Regularizers}
\label{SecNonconvexRegExas}

We now state and discuss conditions on the regularizer,
defined in terms of a univariate function $\myrho: \real \rightarrow
\real$.
\begin{assumption}
\label{AsRho}
\mbox{}
\begin{enumerate}
\item[(i)] The function $\myrho$ satisfies $\myrho(0) = 0$ and is
  symmetric around zero (i.e., $\myrho(t) = \myrho(-t)$ for all $t \in
  \real$).
\item[(ii)] On the nonnegative real line, the function $\myrho$ is
  nondecreasing.
\item[(iii)] For $t > 0$, the function $t \mapsto \frac{\myrho(t)}{t}$
  is nonincreasing in $t$.
\item[(iv)] The function $\myrho$ is differentiable for all $t \neq
  0$ and subdifferentiable at $t = 0$, with $\lim_{t \rightarrow 0^+} \rho_\lambda'(t) = \lambda L$.
\item[(v)] There exists $\mupar > 0$ such that
  $\doubrho(t) \defn \myrho(t) + \frac{\mu}{2} t^2$ is convex.
\end{enumerate}
\end{assumption}

It is instructive to compare the conditions of Assumption~\ref{AsRho}
to similar conditions previously proposed in literature. Conditions
(i)--(iii) are the same as those proposed in~\cite{ZhaZha12}, except we omit the extraneous condition of
subadditivity (cf.\ Lemma 1 of~\citealp{CheGu13}). Such
conditions are relatively mild and are satisfied for a wide variety of
regularizers. Condition (iv) restricts the class of penalties by
excluding regularizers such as the bridge ($\ell_q$-) penalty, which
has infinite derivative at 0; and the capped-$\ell_1$ penalty, which
has points of non-differentiability on the positive real
line. However, one may check that if $\rho_\lambda$ has an unbounded
derivative at zero, then $\betatil = 0$ is \emph{always} a local
optimum of the composite objective~\eqref{EqnNonconvexReg}, so there
is no hope for $\|\betatil - \betastar\|_2$ to be vanishingly
small. Condition (v), known as \emph{weak convexity}~\citep{Via82},
also appears in~\cite{CheGu13} and is a type of curvature
constraint that controls the level of nonconvexity of
$\myrho$. Although this condition is satisfied by many regularizers of
interest, it is again not satisfied by capped-$\ell_1$ for any $\mu >
0$. For details on how our arguments may be modified to handle the
more tricky capped-$\ell_1$ penalty, see Appendix~\ref{AppCapped}.

Nonetheless, many regularizers that are commonly used in practice
satisfy all the conditions in Assumption~\ref{AsRho}.  It is easy
to see that the standard $\ell_1$-norm \mbox{$\myrho(\beta) = \lambda
  \|\beta\|_1$} satisfies these conditions.  More exotic functions
have been studied in a line of past work on nonconvex regularization,
and we provide a few examples here: \\

\textbf{SCAD penalty:}  This penalty,
due to~\cite{FanLi01}, takes the form
\begin{align}
 \label{EqnSCADdefn}
\myrho(t) & \defn
\begin{cases}
\lambda |t|, & \mbox{for $|t| \le \lambda$,} \\
-(t^2 - 2a\lambda |t| + \lambda^2)/(2(a-1)), & \mbox{for $\lambda <
  |t| \le a \lambda$,} \\
(a+1)\lambda^2/2, & \mbox{ for $|t| > a\lambda$},
\end{cases}
\end{align}
where $a > 2$ is a fixed parameter.  As verified in
Lemma~\ref{LemSCAD} of Appendix~\ref{AppVerify}, the SCAD penalty
satisfies the conditions of Assumption~\ref{AsRho} with $L = 1$ and
$\mu = \frac{1}{a-1}$. \\

\textbf{MCP regularizer:} This penalty, due to~\cite{Zha12},
takes the form
\begin{align}
 \label{EqnMCPdefn}
\myrho(t) & \defn \sign(t) \, \lambda \cdot \int_0^{|t|} \left(1 -
\frac{z}{\lambda b}\right)_+ dz,
\end{align}
where $b > 0$ is a fixed parameter.  As verified in Lemma~\ref{LemMCP}
in Appendix~\ref{AppVerify}, the MCP regularizer satisfies the
conditions of Assumption~\ref{AsRho} with $L = 1$ and $\mu =
\frac{1}{b}$.


\subsection{Nonconvex Loss Functions and Restricted Strong Convexity}

Throughout this paper, we require the loss function $\EmpLoss$ to be
differentiable, but we do not require it to be convex. Instead, we
impose a weaker condition known as restricted strong convexity (RSC).
Such conditions have been discussed in previous
literature~\citep{NegRavWaiYu12, AgaEtal12}, and involve a lower bound
on the remainder in the first-order Taylor expansion of $\EmpLoss$. In
particular, our main statistical result is based on the following RSC
condition:
\begin{subnumcases}{
\label{EqnRSC}
 \inprod{\nabla \EmpLoss(\betastar + \Delta) - \nabla
   \EmpLoss(\betastar)}{\Delta} \geq}
\label{EqnLocalRSC}
\alpha_1 \|\Delta\|_2^2 - \tau_1 \frac{\log p}{n} \|\Delta\|_1^2, \quad \quad
\forall \|\Delta\|_2 \leq 1, \\
\label{EqnL2Bd}
\alpha_2 \|\Delta\|_2 - \tau_2 \sqrt{\frac{\log p}{n}} \|\Delta\|_1, \quad
\forall \|\Delta\|_2 \geq 1,
\end{subnumcases}
where the $\alpha_j$'s are strictly positive constants and the
$\tau_j$'s are nonnegative constants.

To understand this condition, note that if $\EmpLoss$ were actually
strongly convex, then both these RSC inequalities would hold with
$\alpha_1 = \alpha_2 > 0$ and $\tau_1 = \tau_2 = 0$.  However, in the
high-dimensional setting ($\pdim \gg \numobs)$, the empirical loss
$\EmpLoss$ will not in general be strongly convex or even convex, but
the RSC condition may still hold with strictly positive $(\alpha_j,
\tau_j)$.  In fact, if $\EmpLoss$ is convex (but not strongly convex),
the left-hand expression in~\eqref{EqnRSC} is always
nonnegative, so~\eqref{EqnLocalRSC} and~\eqref{EqnL2Bd}
hold trivially for $\frac{\|\Delta\|_1}{\|\Delta\|_2} \ge
\sqrt{\frac{\alpha_1 n}{\tau_1 \log p}}$ and
$\frac{\|\Delta\|_1}{\|\Delta\|_2} \ge \frac{\alpha_2}{\tau_2}
\sqrt{\frac{n}{\log p}}$, respectively. Hence, the RSC inequalities
only enforce a type of strong convexity condition over a cone of the
form $\left\{\frac{\|\Delta\|_1}{\|\Delta\|_2} \le c
\sqrt{\frac{n}{\log p}}\right\}$.

It is important to note that the class of functions satisfying RSC
conditions of this type is much larger than the class of convex
functions; for instance, our own past work~\citep{LohWai11a} exhibits
a large family of nonconvex quadratic functions that satisfy the
condition (see Section~\ref{SecCorrLinear} below for further
discussion). Furthermore, note that we have stated two separate RSC
inequalities~\eqref{EqnRSC} for different ranges of $\|\Delta\|_2$,
unlike in past work~\citep{NegRavWaiYu12, AgaEtal12, LohWai11a}. As
illustrated in the corollaries of Sections~\ref{SecGLM}
and~\ref{SecGLasso} below, an equality of the first
type~\eqref{EqnLocalRSC} will only hold locally over $\Delta$ when we
have more complicated types of loss functions that are only quadratic
around a neighborhood of the origin. As proved in
Appendix~\ref{AppGenRSC}, however, \eqref{EqnL2Bd} is
implied by~\eqref{EqnLocalRSC} in cases when $\EmpLoss$ is
convex, which sustains our theoretical conclusions even under the
weaker RSC conditions~\eqref{EqnRSC}. Further note that by the
inequality
\begin{equation*}
	\Loss_n(\betastar + \Delta) - \Loss_n(\betastar) \le \inprod{\nabla \Loss_n(\betastar + \Delta)}{\Delta},
\end{equation*}
which holds whenever $\Loss_n$ is convex, the RSC condition appearing in past work (e.g., \citealp{AgaEtal12}) implies that~\eqref{EqnLocalRSC} holds, so~\eqref{EqnL2Bd} also holds by Lemma~\ref{LemLocalRSC} in Appendix~\ref{AppGenRSC}. In cases where $\Loss_n$ is quadratic but not necessarily convex (cf.\ Section~\ref{SecCorrLinear}), our RSC condition~\eqref{EqnRSC} is again no stronger than the conditions appearing in past work, since those RSC conditions enforce~\eqref{EqnLocalRSC} globally over $\Delta \in \real^p$, which by Lemma~\ref{LemGlobalRSC} in Appendix~\ref{AppGenRSC} implies that~\eqref{EqnL2Bd} holds, as well. To allow for more general situations where $\Loss_n$ may be non-quadratic and/or nonconvex, we prefer to use the RSC formulation~\eqref{EqnRSC} in this paper.

Finally, we clarify that whereas~\cite{NegRavWaiYu12} define an RSC condition with respect to a fixed subset $S \subseteq \{1, \dots, p\}$, we follow the setup of~\cite{AgaEtal12} and~\cite{LohWai11a} and essentially require an RSC condition of the type defined in~\cite{NegRavWaiYu12} to hold uniformly over \emph{all} subsets $S$ of size $k$. Although the results on statistical consistency may be established under the weaker RSC assumption with $S \defn\supp(\betastar)$, a uniform RSC condition is preferred because the true support set is not known a priori. The uniform RSC condition may be shown to hold w.h.p.\ in the sub-Gaussian settings we consider here (cf.\ Sections~\ref{SecCorrLinear}---\ref{SecGLasso} below); in fact, the proofs contained in~\cite{NegRavWaiYu12} establish a uniform RSC condition, as well.


\section{Statistical Guarantees and Consequences}
\label{SecMain}

With this setup, we now turn to the statements and proofs of our main
statistical guarantees, as well as some consequences for various
statistical models.  Our theory applies to any vector $\betatil \in
\real^\pdim$ that satisfies the \emph{first-order necessary
  conditions} to be a local minimum of the
program~\eqref{EqnNonconvexReg}:
\begin{align}
\label{EqnFirstOrder}
\inprod{ \nabla \EmpLoss(\betatil) + \nabla \myrho(\betatil)}{\beta -
  \betatil} & \geq 0, \qquad \mbox{for all feasible $\beta \in
  \real^\pdim$.}
\end{align}
When $\betatil$ lies in the interior of the constraint set, this
condition reduces to the usual zero-subgradient condition:
\begin{equation*}
	\nabla \EmpLoss(\betatil) + \nabla \myrho(\betatil) = 0.
\end{equation*}
Such vectors $\betatil$ satisfying the condition~\eqref{EqnFirstOrder} are also known as \emph{stationary points}~\citep{Ber99}; note that the set of stationary points also includes interior local maxima. Hence, although some of the discussion below is stated in terms of ``local minima," the results hold for interior local maxima, as well.


\subsection{Main Statistical Results}

Our main theorems are deterministic in nature and specify
conditions on the regularizer, loss function, and parameters that
guarantee that any local optimum $\betatil$ lies close to the target
vector \mbox{$\betastar = \arg \min \limits_{\beta \in \real^\pdim}
  \PopLoss(\beta)$.} Corresponding probabilistic results will be
derived in subsequent sections, where we establish that for
appropriate choices of parameters $(\lambda, R)$, the required
conditions hold with high probability. Applying the theorems to
particular models requires bounding the random quantity
$\|\nabla \EmpLoss(\betastar)\|_\infty$ and verifying the RSC
conditions~\eqref{EqnRSC}.  We begin with a theorem that provides
guarantees on the error $\betatil - \betastar$ as measured in the
$\ell_1$- and $\ell_2$-norms:

\begin{mytheorem}
\label{TheoEll12Meta}
Suppose the regularizer $\myrho$ satisfies Assumption~\ref{AsRho}, the
empirical loss $\EmpLoss$ satisfies the RSC conditions~\eqref{EqnRSC}
with $\frac{3}{4} \mu < \alpha_1$, and $\betastar$ is feasible for the objective.
Consider any choice of $\lambda$ such that
\begin{align}
\label{EqnLambdaChoice}
\frac{4}{L} \cdot \max \left \{\|\nabla \EmpLoss(\betastar)\|_\infty,
\; \alpha_2 \sqrt{\frac{\log \pdim}{\numobs}}\right\} \; \leq \;
\lambda \; \leq \; \frac{\alpha_2}{6RL},
\end{align}
and suppose $\numobs \geq \frac{16R^2 \max(\tau_1^2, \tau_2^2)}{\alpha_2^2} \log
\pdim$.  Then any vector $\betatil$ satisfying the first-order
necessary conditions~\eqref{EqnFirstOrder} satisfies the error bounds
\begin{align}
\label{EqnStatBounds}
\|\betatil - \betastar\|_2 \le \frac{6 \lambda L \sqrt{k}}{4\alpha_1 -
  3\mu}, \qquad \mbox{and} \qquad \|\betatil - \betastar\|_1 \le
\frac{24 \lambda L k}{4\alpha_1 - 3\mu},
\end{align}
where $k = \|\betastar\|_0$.
\end{mytheorem}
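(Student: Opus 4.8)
The plan is to work directly with the error vector $\Delta \defn \betatil - \betastar$ and turn the first-order stationarity condition into a quadratic inequality in $\Delta$. Since both $\betatil$ and $\betastar$ are feasible, at the outset I have $\|\Delta\|_1 \le 2R$. I would first dispatch the regime $\|\Delta\|_2 \ge 1$ using the second RSC bound~\eqref{EqnL2Bd} together with feasibility and the window for $\lambda$, showing that no stationary point can have $\|\Delta\|_2 \ge 1$; this reduces matters to $\|\Delta\|_2 \le 1$, where the local inequality~\eqref{EqnLocalRSC} is in force.

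The core of the argument combines three ingredients. Testing the first-order condition~\eqref{EqnFirstOrder} at the feasible point $\beta = \betastar$ gives $\inprod{\nabla\EmpLoss(\betatil) + \nabla\myrho(\betatil)}{\Delta} \le 0$. For the loss term, I write $\nabla\EmpLoss(\betatil) = [\nabla\EmpLoss(\betatil) - \nabla\EmpLoss(\betastar)] + \nabla\EmpLoss(\betastar)$, apply~\eqref{EqnLocalRSC}, and bound $|\inprod{\nabla\EmpLoss(\betastar)}{\Delta}| \le \|\nabla\EmpLoss(\betastar)\|_\infty \|\Delta\|_1 \le \tfrac{\lambda L}{4}\|\Delta\|_1$ using the lower bound on $\lambda$, obtaining $\inprod{\nabla\EmpLoss(\betatil)}{\Delta} \ge \alpha_1\|\Delta\|_2^2 - \tau_1 \tfrac{\log p}{n}\|\Delta\|_1^2 - \tfrac{\lambda L}{4}\|\Delta\|_1$. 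For the penalty term I exploit weak convexity (Assumption~\ref{AsRho}(v)): since $\doubrho$ is convex, its subgradient inequality at $\betatil$ gives $\inprod{\nabla\myrho(\betatil)}{\Delta} \ge \myrho(\betatil) - \myrho(\betastar) - \tfrac{\mu}{2}\|\Delta\|_2^2$. Combining produces the master inequality
\[
\Bigl(\alpha_1 - \tfrac{\mu}{2}\Bigr)\|\Delta\|_2^2 + \myrho(\betatil) - \myrho(\betastar) \;\le\; \tau_1 \tfrac{\log p}{n}\|\Delta\|_1^2 + \tfrac{\lambda L}{4}\|\Delta\|_1 .
\]

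It remains to control $\myrho(\betatil) - \myrho(\betastar)$ and the noise term. Conditions (i)--(iv) force $\myrho$ to be $\lambda L$-Lipschitz with $\myrho(t) \le \lambda L|t|$ and, crucially, \emph{subadditive} (a consequence of $t \mapsto \myrho(t)/t$ being nonincreasing with $\myrho(0)=0$), so a support decomposition with $S = \supp(\betastar)$, $|S| = k$, yields $\myrho(\betatil) - \myrho(\betastar) \ge \myrho(\Delta_{S^c}) - \lambda L\|\Delta_S\|_1$. The sample-size hypothesis, the lower bound on $\lambda$, and $\|\Delta\|_1 \le 2R$ let me absorb $\tau_1 \tfrac{\log p}{n}\|\Delta\|_1^2 \le \tfrac{\lambda L}{4}\|\Delta\|_1$, so the two linear terms total $\tfrac{\lambda L}{2}\|\Delta\|_1$. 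Weak convexity also supplies the matching lower envelope $\myrho(t) \ge \lambda L|t| - \tfrac{\mu}{2}t^2$, hence $\tfrac{\lambda L}{2}\|\Delta_{S^c}\|_1 \le \tfrac12\myrho(\Delta_{S^c}) + \tfrac{\mu}{4}\|\Delta_{S^c}\|_2^2$; using this to cancel the $\|\Delta_{S^c}\|_1$ contribution against $\myrho(\Delta_{S^c})$ on the left, dropping the leftover $\tfrac12\myrho(\Delta_{S^c}) \ge 0$, and bounding $\|\Delta_{S^c}\|_2^2 \le \|\Delta\|_2^2$, the estimate collapses to $\bigl(\alpha_1 - \tfrac{3\mu}{4}\bigr)\|\Delta\|_2^2 \le \tfrac{3}{2}\lambda L\|\Delta_S\|_1 \le \tfrac{3}{2}\lambda L\sqrt{k}\,\|\Delta\|_2$. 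Since $\alpha_1 > \tfrac34\mu$, this rearranges to the stated $\ell_2$ bound; a companion manipulation of the same inequality gives a cone relation of the form $\|\Delta_{S^c}\|_1 \le 3\|\Delta_S\|_1$, whence $\|\Delta\|_1 \le 4\sqrt{k}\,\|\Delta\|_2$, converting the $\ell_2$ bound into the $\ell_1$ bound (the factor $24 = 4 \cdot 6$).

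The main obstacle is the nonconvexity of $\myrho$: I cannot invoke convexity of the penalty, and must substitute weak convexity plus subadditivity for it. The delicate bookkeeping concerns the two $\mu$-corrections---$\tfrac{\mu}{2}\|\Delta\|_2^2$ from the weak-convexity subgradient inequality and $\tfrac{\mu}{4}\|\Delta\|_2^2$ from absorbing the $S^c$ penalty mass via the lower envelope---which must recombine into exactly $\tfrac{3\mu}{4}\|\Delta\|_2^2$; handled naively (e.g.\ bounding each $\|\Delta_{S^c}\|_2^2$ crudely) the argument would spuriously demand $\alpha_1 > \mu$ rather than the sharp $\alpha_1 > \tfrac34\mu$. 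Tracking the constants in the $\lambda$-window and the sample-size bound precisely, so that the linear-in-$\|\Delta\|_1$ terms sum to $\tfrac{\lambda L}{2}\|\Delta\|_1$, is what produces the exact factors $6$ and $24$. A secondary technical point is ruling out the large-error regime $\|\Delta\|_2 \ge 1$ cleanly, which is exactly where the feasibility radius $R$ and the second RSC inequality~\eqref{EqnL2Bd} enter.
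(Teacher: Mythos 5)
Your $\ell_2$ argument is correct and is, modulo bookkeeping, the paper's own proof: the same two-regime use of~\eqref{EqnL2Bd} and~\eqref{EqnLocalRSC}, the same weak-convexity step, the same absorption of the noise and $\tau_1$ terms into $\tfrac{\lambda L}{2}\|\Delta\|_1$, and the same $\tfrac{\mu}{2}+\tfrac{\mu}{4}$ accounting, leading to $\bigl(\alpha_1 - \tfrac{3\mu}{4}\bigr)\|\Delta\|_2^2 \le \tfrac{3\lambda L}{2}\|\Delta_S\|_1$ and hence the factor $6$. (That you work with the support $S$ of $\betastar$ rather than the paper's set $A$ of the $k$ largest coordinates of $\Delta$ is immaterial for this part.) The genuine gap is the $\ell_1$ step. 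The ``companion manipulation'' you invoke for the exact cone relation $\|\Delta_{S^c}\|_1 \le 3\|\Delta_S\|_1$ is not available with the tools you allow yourself. From your inequality
\begin{equation*}
\Bigl(\alpha_1 - \tfrac{3\mu}{4}\Bigr)\|\Delta\|_2^2 + \tfrac{1}{2}\myrho(\Delta_{S^c}) \;\le\; \tfrac{3\lambda L}{2}\|\Delta_S\|_1,
\end{equation*}
the lower envelope $\myrho(t) \ge \lambda L|t| - \tfrac{\mu}{2}t^2$ yields only the \emph{approximate} cone
\begin{equation*}
\lambda L\|\Delta_{S^c}\|_1 \;\le\; 3\lambda L\|\Delta_S\|_1 + \tfrac{\mu}{2}\|\Delta_{S^c}\|_2^2 .
\end{equation*}
Absorbing the quadratic slack into the curvature term forces the stronger hypothesis $\alpha_1 \ge \mu$ (one gets $(\alpha_1 - \mu)\|\Delta\|_2^2 + \tfrac{\lambda L}{2}\|\Delta_{S^c}\|_1 \le \tfrac{3\lambda L}{2}\|\Delta_S\|_1$), while carrying it along and bounding it via the $\ell_2$ result gives $\|\Delta\|_1 \le \bigl(4 + \tfrac{3\mu}{4\alpha_1-3\mu}\bigr)\sqrt{k}\|\Delta\|_2$, i.e.\ an $\ell_1$ bound inflated over the stated one by the factor $1 + \tfrac{3\mu}{4(4\alpha_1 - 3\mu)}$, which exceeds $1$ for every $\mu>0$ and diverges as $\alpha_1 \downarrow \tfrac{3}{4}\mu$. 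So the constant $24$ (indeed any bound of order $\lambda L k/(4\alpha_1 - 3\mu)$) is unreachable along your route.

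The missing idea is the paper's Lemma~\ref{LemEll1Reg}, which uses condition (iii) of Assumption~\ref{AsRho} in a sharper, ordering-based way. The paper keeps its inequality in \emph{penalty-value} form, $3\myrho(\betastar) - \myrho(\betatil) \ge 0$ (its inequality~\eqref{EqnReindeer}, obtained by applying the envelope to all of $\Delta$ together with subadditivity), and only then converts to $\ell_1$ norms: taking $A$ to be the set of the $k$ largest coordinates of $\Delta$ in magnitude and $f(t) = t/\myrho(t)$ (nondecreasing by (iii)), every coordinate of $\Delta_{A^c}$ is dominated by every coordinate of $\Delta_A$, so $\|\Delta_{A^c}\|_1 \le \myrho(\Delta_{A^c}) f(\|\Delta_{A^c}\|_\infty)$ and $\myrho(\Delta_A) f(\|\Delta_{A^c}\|_\infty) \le \|\Delta_A\|_1$. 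Dividing through by the \emph{common} pivot $f(\|\Delta_{A^c}\|_\infty) \ge 1/(\lambda L)$ turns nonnegativity of $3\myrho(\Delta_A) - \myrho(\Delta_{A^c})$ into the exact cone $\|\Delta_{A^c}\|_1 \le 3\|\Delta_A\|_1$ with no $\mu$-correction at all. Your early replacement of the $S$-block penalty by $\lambda L\|\Delta_S\|_1$ via the Lipschitz bound is precisely what destroys this structure, and it is irreversible: from $\myrho(\Delta_{S^c}) \le 3\lambda L\|\Delta_S\|_1$ one cannot recover $3\myrho(\Delta_S) - \myrho(\Delta_{S^c}) \ge 0$, since the Lipschitz inequality $\myrho(\Delta_S) \le \lambda L \|\Delta_S\|_1$ points the wrong way. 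To repair the proof, defer the Lipschitz conversion, derive the penalty-value inequality as the paper does, and then invoke the ratio/ordering argument.
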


From the bound~\eqref{EqnStatBounds}, note that the squared
$\ell_2$-error grows proportionally with $k$, the number of nonzeros
in the target parameter, and with $\lambda^2$.  As will be clarified
in the following sections, choosing $\lambda$ proportional to
$\sqrt{\frac{\log \pdim}{\numobs}}$ and $R$ proportional to
$\frac{1}{\lambda}$ will satisfy the requirements of
Theorem~\ref{TheoEll12Meta} w.h.p.\ for many statistical models, in
which case we have a squared-$\ell_2$ error that scales as
$\frac{\kdim \log \pdim}{\numobs}$, as expected.

Our next theorem provides a bound on a measure of the prediction
error, as defined by the quantity
\begin{align}
\label{EqnPredError}
D \big( \betatil; \betastar) & \defn \inprod{\nabla \Loss_n(\betatil)
  - \nabla \Loss_n(\betastar)}{\betatil - \betastar}.
\end{align}
When the empirical loss $\Loss_n$ is a convex function, this
measure is always nonnegative, and in various special cases, it has a
form that is readily interpretable.  For instance, in the case of the
least-squares objective function $\Loss_n(\beta) = \frac{1}{2 \numobs}
\|y - X \beta\|_2^2$, we have 
\begin{align*}
D \big( \betatil; \betastar) & = \frac{1}{\numobs} \|X ( \betatil -
\betastar)\|_2^2 = \frac{1}{\numobs} \sum_{i=1}^\numobs \big(
\inprod{x_i}{\betatil - \betastar} \big)^2,
\end{align*}
corresponding to the usual measure of (fixed design) prediction error
for a linear regression problem (cf.\ Corollary~\ref{CorLinear}
below).  More generally, when the loss function is the negative log
likelihood for a generalized linear model with cumulant function
$\psi$, the error measure~\eqref{EqnPredError} is equivalent to
the symmetrized Bregman divergence defined by $\psi$. (See
Section~\ref{SecGLM} for further details.)

\begin{mytheorem}
\label{TheoPredMeta}
Under the same conditions as Theorem~\ref{TheoEll12Meta}, the
error measure~\eqref{EqnPredError} is bounded as
\begin{equation}
\label{EqnPredict}
\inprod{\nabla \Loss_n(\betatil) - \nabla \Loss_n(\betastar)}{\betatil
  - \betastar} \le \lambda^2 L^2 k \left(\frac{9}{4\alpha_1 - 3\mu}
+ \frac{27\mu}{(4\alpha_1 - 3\mu)^2}\right).
\end{equation}
\end{mytheorem}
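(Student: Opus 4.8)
The plan is to bound the error measure $D(\betatil;\betastar)$ from above by converting the gradient of the loss at $\betatil$ into gradient-of-regularizer terms via the first-order stationarity condition, and then substituting the $\ell_1$- and $\ell_2$-bounds already furnished by Theorem~\ref{TheoEll12Meta}. Writing $\Delta \defn \betatil - \betastar$, $S \defn \supp(\betastar)$, and $D \defn \inprod{\nabla \EmpLoss(\betatil) - \nabla \EmpLoss(\betastar)}{\Delta}$, the starting point is the decomposition $D = \inprod{\nabla \EmpLoss(\betatil)}{\Delta} - \inprod{\nabla \EmpLoss(\betastar)}{\Delta}$. Since $\betastar$ is feasible, I would apply the first-order condition~\eqref{EqnFirstOrder} with $\beta = \betastar$ to obtain $\inprod{\nabla \EmpLoss(\betatil)}{\Delta} \le -\inprod{\nabla \myrho(\betatil)}{\Delta}$, so that $D \le -\inprod{\nabla \myrho(\betatil)}{\Delta} - \inprod{\nabla \EmpLoss(\betastar)}{\Delta}$. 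This replaces the (unbounded-above) loss increment by regularizer terms that can be controlled using Assumption~\ref{AsRho}.

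The core of the argument is bounding $-\inprod{\nabla \myrho(\betatil)}{\Delta}$. Here I would invoke weak convexity (Assumption~\ref{AsRho}(v)): since $\doubrho(\beta) = \myrho(\beta) + \frac{\mu}{2}\|\beta\|_2^2$ is convex with $\nabla \myrho = \nabla \doubrho - \mu \beta$, the subgradient inequality for $\doubrho$ at $\betatil$ yields, after the $\mu$-terms collapse into a perfect square, the bound $-\inprod{\nabla \myrho(\betatil)}{\Delta} \le \myrho(\betastar) - \myrho(\betatil) + \frac{\mu}{2}\|\Delta\|_2^2$. To bound $\myrho(\betastar) - \myrho(\betatil)$ I would use two pointwise facts, both consequences of Assumption~\ref{AsRho}: the Lipschitz bound $\myrho(t) \le \lambda L |t|$ (from (iii)--(iv), since $\myrho'(t) \le \myrho(t)/t \le \lambda L$), which controls the coordinates in $S$ by $\lambda L \|\Delta_S\|_1$; and the lower bound $\myrho(t) \ge \lambda L |t| - \frac{\mu}{2}t^2$ (since $\lambda L \in \partial \doubrho(0)$, so the convex function $\doubrho$ dominates $\lambda L|t|$), which controls the coordinates in $S^c$ and produces a favorable $-\lambda L \|\Delta_{S^c}\|_1$ together with a further $\frac{\mu}{2}\|\Delta_{S^c}\|_2^2$ correction.

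Combining these with $-\inprod{\nabla \EmpLoss(\betastar)}{\Delta} \le \|\nabla \EmpLoss(\betastar)\|_\infty \|\Delta\|_1 \le \frac{\lambda L}{4}\|\Delta\|_1$, where the last step uses the lower end of the prescribed range~\eqref{EqnLambdaChoice} for $\lambda$, I would collect terms to reach an inequality of the form $D \le c_1 \lambda L \|\Delta_S\|_1 + c_2 \mu \|\Delta\|_2^2$, the $S^c$ contributions being nonpositive and hence discardable after invoking the cone/support structure of $\Delta$ established en route to Theorem~\ref{TheoEll12Meta}. At this point I would substitute the two error bounds~\eqref{EqnStatBounds}, together with $\|\Delta_S\|_1 \le \sqrt{k}\,\|\Delta\|_2$, to convert everything into $\lambda^2 L^2 k$ times rational functions of $(\alpha_1, \mu)$, matching the stated bound~\eqref{EqnPredict}.

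The main obstacle is the careful bookkeeping of constants: there are two distinct sources of a $\mu \|\Delta\|_2^2$ contribution (the weak-convexity correction and the regularizer lower bound on $S^c$), and the linear term mixes $\|\Delta_S\|_1$ with $\|\Delta_{S^c}\|_1$ through the $\frac{\lambda L}{4}\|\Delta\|_1$ piece. Obtaining the precise coefficients $\tfrac{9}{4\alpha_1 - 3\mu}$ and $\tfrac{27\mu}{(4\alpha_1 - 3\mu)^2}$ requires tracking exactly how much of the $\|\Delta\|_2^2$ mass lives off-support and how the support structure converts $\|\Delta\|_1$ into $\|\Delta_S\|_1$ with the right factor, so that the final substitution of the $\ell_2$-bound $\|\Delta\|_2 \le \tfrac{6\lambda L \sqrt{k}}{4\alpha_1 - 3\mu}$ yields the claimed form rather than merely an order-correct bound.
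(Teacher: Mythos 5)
Your overall skeleton does match the paper's proof: both start from the first-order condition with $\beta = \betastar$, apply the weak-convexity bound $-\inprod{\nabla \myrho(\betatil)}{\nutil} \le \myrho(\betastar) - \myrho(\betatil) + \frac{\mu}{2}\|\nutil\|_2^2$ (with $\nutil \defn \betatil - \betastar$, your $\Delta$), control the $\nabla \EmpLoss(\betastar)$ term by H\"{o}lder, and finish by substituting the $\ell_2$-bound of Theorem~\ref{TheoEll12Meta}. The gap is in your middle step, and it is not mere bookkeeping. Your mechanism---the pointwise bounds $\myrho(t) \le \lambda L |t|$ on $S$ and $\myrho(t) \ge \lambda L|t| - \frac{\mu}{2}t^2$ on $S^c$, plus the split of $\frac{\lambda L}{4}\|\nutil\|_1$ across $S$ and $S^c$---yields
\begin{equation*}
D \;\le\; \frac{5\lambda L}{4}\|\nutil_S\|_1 \;-\; \frac{3\lambda L}{4}\|\nutil_{S^c}\|_1 \;+\; \frac{\mu}{2}\|\nutil_{S^c}\|_2^2 \;+\; \frac{\mu}{2}\|\nutil\|_2^2 .
\end{equation*}
Contrary to your claim, the $S^c$ contribution is \emph{not} nonpositive: the term $\frac{\mu}{2}\|\nutil_{S^c}\|_2^2$ is positive and is not dominated by $\frac{3\lambda L}{4}\|\nutil_{S^c}\|_1$, since $\mu$ is a fixed constant while $\lambda \asymp \sqrt{\log p / n} \rightarrow 0$. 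Keeping it (bounded by $\frac{\mu}{2}\|\nutil\|_2^2$) gives $D \le \frac{5}{4}\lambda L \sqrt{k}\,\|\nutil\|_2 + \mu \|\nutil\|_2^2$, and substituting $\|\nutil\|_2 \le \frac{6\lambda L\sqrt{k}}{4\alpha_1 - 3\mu}$ produces the constants $\left(\tfrac{15}{2}, 36\right)$ in place of $(9, 27)$. Since $36 > 27$, your bound is strictly \emph{weaker} than the claimed one whenever $\mu > \frac{4}{9}\alpha_1$, a regime allowed by the hypothesis $\frac{3}{4}\mu < \alpha_1$. So your route establishes an order-equivalent result but not the stated inequality.

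The two ingredients you are missing are exactly how the paper gets the sharper constants. First, instead of splitting $\|\nabla \EmpLoss(\betastar)\|_\infty \|\nutil\|_1$ by support, the paper converts it into regularizer values via Lemma~\ref{LemRhoCond}(b) and subadditivity: $\frac{\lambda L}{2}\|\nutil\|_1 \le \frac{\myrho(\betastar) + \myrho(\betatil)}{2} + \frac{\mu}{4}\|\nutil\|_2^2$, so all regularizer terms combine into $\frac{1}{2}\big(3\myrho(\betastar) - \myrho(\betatil)\big)$. Second, this combination is handled by Lemma~\ref{LemEll1Reg}, which exploits the monotonicity of $t \mapsto \myrho(t)/t$ (not the weak-convexity minorant) to give $3\myrho(\betastar) - \myrho(\betatil) \le 3\lambda L \|\nutil_A\|_1 - \lambda L\|\nutil_{A^c}\|_1$ with \emph{no} $\mu$-correction; the sign structure of the difference lets the common ratio bound be replaced by $\lambda L$ exactly. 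This yields $D \le \frac{3}{2}\lambda L \sqrt{k}\,\|\nutil\|_2 + \frac{3\mu}{4}\|\nutil\|_2^2$ and hence $(9,27)$. Note that your pointwise minorant $\myrho(t) \ge \lambda L|t| - \frac{\mu}{2}t^2$ is tight (e.g., for MCP), so the extra $\mu$-loss is intrinsic to your mechanism: no amount of more careful tracking within your framework recovers the stated constants.
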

This result shows that the prediction error~\eqref{EqnPredError}
behaves similarly to the squared Euclidean norm between $\betatil$ and
$\betastar$. \\

\textbf{Remark on $(\alpha_1, \mu)$:}
It is worthwhile to discuss the quantity $4\alpha_1 - 3\mu$ appearing
in the denominator of the bounds in Theorems~\ref{TheoEll12Meta}
and~\ref{TheoPredMeta}. Recall that $\alpha_1$ measures the level of
curvature of the loss function $\Loss_n$, while $\mu$ measures the
level of nonconvexity of the penalty $\rho_\lambda$. Intuitively, the
two quantities should play opposing roles in our result: larger values
of $\mu$ correspond to more severe nonconvexity of the penalty,
resulting in worse behavior of the overall
objective~\eqref{EqnNonconvexReg}, whereas larger values of $\alpha_1$
correspond to more (restricted) curvature of the loss, leading to
better behavior. However, while the condition $\frac{3}{4}\mu < \alpha_1$ is
needed for the proof technique employed in
Theorem~\ref{TheoEll12Meta}, it does not seem to be strictly necessary
in order to guarantee good behavior of local optima. As a careful examination of the proof reveals, the condition may be replaced by the alternate condition $c \mu < \alpha_1$, for any constant $c > \frac{1}{2}$. However, note that
the capped-$\ell_1$ penalty may be viewed as a limiting version of
SCAD when $a \rightarrow 1$, or equivalently, $\mu \rightarrow
\infty$. Viewed in this light, Theorem~\ref{TheoEll12Meta2}, to be
stated and proved in Appendix~\ref{AppCapped}, reveals that a
condition of the form $c\mu < \alpha_1$ is not necessary, at least in general, for
good behavior of local optima.  Moreover, Section~\ref{SecSims}
contains empirical studies using linear regression and the SCAD
penalty showing that local optima may be well-behaved when
$\alpha_1 < \frac{3}{4} \mu$.  Nonetheless, our simulations (see
Figure~\ref{FigBreakdown}) also convey a cautionary message: In
extreme cases, where $\alpha_1$ is significantly smaller than $\mu$, the good
behavior of local optima (and the optimization algorithms used to find
them) appear to degenerate.

Finally, we note that~\cite{NegRavWaiYu12} have shown that for convex
$M$-estimators, the arguments used to analyze $\ell_1$-regularizers
may be generalized to other types of ``decomposable'' regularizers,
such as norms for group sparsity or the nuclear norm for low-rank
matrices. In our present setting, where we allow for nonconvexity in
the loss and regularizer, our theorems have straightforward and
analogous generalizations.


We return to the proofs of Theorems~\ref{TheoEll12Meta}
and~\ref{TheoPredMeta} in Section~\ref{SecTheoProof}. First, we
develop various consequences of these theorems for various nonconvex
loss functions and regularizers of interest. The main technical
challenge is to establish that the RSC conditions~\eqref{EqnRSC} hold
with high probability for appropriate choices of positive constants
$\{(\alpha_j, \tau_j)\}_{j=1}^2$.


\subsection{Corrected Linear Regression}
\label{SecCorrLinear}

We begin by considering the case of high-dimensional linear regression
with systematically corrupted observations. Recall that in the
framework of ordinary linear regression, we have the linear model
\begin{equation}
\label{EqnLinModel}
y_i = \underbrace{\inprod{\betastar}{x_i}}_{\sum_{j=1}^\pdim \beta^*_j
  x_{ij}} + \; \epsilon_i, \qquad \mbox{for $i=1, \dots, \numobs$,}
\end{equation}
where $\betastar \in \real^\pdim$ is the unknown parameter vector and
$\{(x_i, y_i)\}_{i=1}^n$ are observations. Following a line of past
work (e.g.,~\citealp{RosTsy10, LohWai11a}), assume we instead observe
pairs $\{(z_i, y_i)\}_{i=1}^n$, where the $z_i$'s are systematically
corrupted versions of the corresponding $x_i$'s. Some examples of
corruption mechanisms include the following:

\begin{enumerate}
\item[(a)] \emph{Additive noise:} We observe $z_i = x_i + w_i$, where
  \mbox{$w_i \in \real^{\pdim}$} is a random vector independent of
  $x_i$, say zero-mean with known covariance matrix $\Sigma_w$.
\item[(b)] \emph{Missing data:} For some fraction $\vartheta \in
  [0,1)$, we observe a random vector $z_i \in \real^{\pdim}$ such that
    for each component $j$, we independently observe $z_{ij} = x_{ij}$
    with probability $1-\vartheta$, and $z_{ij} = \ast$ with
    probability $\vartheta$.
\end{enumerate}

We use the population and empirical loss functions
\begin{align}
\label{EqnCorLinearLoss}
\PopLoss(\beta) = \frac{1}{2} \beta^T \Sigma_x \beta - \beta^{*T}
\Sigma_x \beta, \qquad \text{and} \qquad \EmpLoss(\beta) = \frac{1}{2}
\beta^T \GamHat \beta - \gamhat^T \beta,
\end{align}
where $(\GamHat, \gamhat)$ are estimators for $(\Sigma_x, \Sigma_x
\betastar)$ that depend only on $\{(z_i, y_i)\}_{i=1}^n$. It is easy to
see that $\betastar = \arg\min_\beta \Loss(\beta)$. From the
formulation~\eqref{EqnNonconvexReg}, the corrected linear regression
estimator is given by
\begin{equation}
\label{EqnLinearEst}
\betahat \in \arg \min_{\SIDE(\beta) \le R} \left\{\frac{1}{2} \beta^T
\GamHat \beta - \gamhat^T \beta + \rho_\lambda(\beta) \right \}.
\end{equation}

We now state a concrete corollary in the case of additive noise (model
(a) above).  In this case, as discussed in~\cite{LohWai11a}, an
appropriate choice of the pair $(\GamHat, \gamhat)$ is given by
\begin{equation}
\label{EqnDefnGamHat}
\GamHat = \frac{Z^TZ}{n} - \Sigma_w, \qquad \text{and} \qquad \gamhat
= \frac{Z^T y}{n}.
\end{equation}
Here, we assume the noise covariance $\Sigma_w$ is known or may be
estimated from replicates of the data. Such an assumption also appears
in canonical errors-in-variables literature~\citep{CarEtal95}, but it
is an open question how to devise a corrected estimator when an
estimate of $\Sigma_w$ is not readily available. If we assume a
sub-Gaussian model on the covariates and errors (i.e., $x_i$, $w_i$,
and $\epsilon_i$ are sub-Gaussian with parameters $\sigma_x^2$,
$\sigma_w^2$, and $\sigma_\epsilon^2$, respectively), the contribution
of the error covariances may be summarized in the error term
\begin{equation}
\label{EqnPhi}
\varphi = (\sigma_x + \sigma_w)\big(\sigma_\epsilon + \sigma_w
\|\betastar\|_2\big),
\end{equation}
which appears as a prefactor in the deviation bounds and
estimation/prediction error bounds for the subsequent estimators
(cf.\ Lemma 2 in~\citealp{LohWai11a}). We make this dependence
explicit in the statement of the corollary for high-dimensional
errors-in-variables regression below. Note in particular that
$\varphi$ scales up with both $\sigma_\epsilon$ and $\sigma_w$. Hence,
even when $\sigma_\epsilon = 0$, corresponding to no additive error,
we will have $\varphi \neq 0$ due to errors in the covariates; whereas
when $\sigma_w = 0$, corresponding to cleanly observed covariates, we
will still have $\varphi \neq 0$ due to the additional additive error
introduced by the $\epsilon_i$'s, agreeing with canonical results for
the Lasso~\citep{BicEtal08}.

In the high-dimensional setting ($\pdim \gg \numobs$), the matrix
$\GamHat$ in~\eqref{EqnDefnGamHat} is always negative
definite: the matrix $\frac{Z^T Z}{\numobs}$ has rank at most
$\numobs$, and the positive definite matrix $\Sigma_w$ is then
subtracted to obtain $\GamHat$.  Consequently, the empirical loss
function $\EmpLoss$ previously defined~\eqref{EqnCorLinearLoss} is
nonconvex.  Other choices of $\GamHat$ are applicable to missing data
(model (b)), and also lead to nonconvex programs
(see~\citealp{LohWai11a} for further details).

\begin{mycorollary}
\label{CorLinear}
Suppose we have i.i.d.\ observations $\{(z_i, y_i)\}_{i=1}^n$ from a
corrupted linear model with additive noise, where the covariates and
error terms are sub-Gaussian. Let $\varphi$ be defined as in~\eqref{EqnPhi} with respect to the sub-Gaussian
parameters. Suppose $(\lambda, R)$ are chosen such that $\betastar$ is
feasible and
\begin{equation*}
c \varphi \sqrt{\frac{\log p}{n}} \le \lambda \le \frac{c'}{R}.
\end{equation*}
Also suppose $\frac{3}{4}\mu < \frac{1}{2} \lambda_{\min}(\Sigma_x)$. Then given a sample
size $\numobs \ge C \, \max\{R^2, k\} \log \pdim$, any stationary point
$\betatil$ of the nonconvex program~\eqref{EqnLinearEst} satisfies the
estimation error bounds
\begin{align*}
\|\betatil - \betastar\|_2 \leq \frac{c_0 \lambda
  \sqrt{k}}{2\lambda_{\min}(\Sigma_x) - 3\mu}, \qquad \mbox{and} \qquad
\|\betatil - \betastar\|_1 \leq \frac{c_0' \lambda
  k}{2\lambda_{\min}(\Sigma_x) - 3\mu},
\end{align*}
and the prediction error bound
\begin{equation*}
	\nutil^T \GamHat \nutil \le \lambda^2 k
        \left(\frac{\widetilde{c_0}}{2\lambda_{\min}(\Sigma_x) - 3\mu} +
        \frac{\widetilde{c_0}' \mu}{(2\lambda_{\min}(\Sigma_x) -
          3\mu)^2}\right),
\end{equation*}
with probability at least $1 - c_1 \exp(-c_2 \log \pdim)$, where
$\|\betastar\|_0 = k$.
\end{mycorollary}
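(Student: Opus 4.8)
The plan is to obtain the corollary as a direct specialization of the deterministic Theorems~\ref{TheoEll12Meta} and~\ref{TheoPredMeta}, so the substantive work is to verify that their hypotheses hold with high probability for the quadratic loss~\eqref{EqnCorLinearLoss} under the choice~\eqref{EqnDefnGamHat}. Since this loss is quadratic, its gradient is $\nabla\EmpLoss(\beta) = \GamHat\beta - \gamhat$, and the first-order Taylor remainder collapses to a single quadratic form,
\begin{equation*}
\inprod{\nabla\EmpLoss(\betastar + \Delta) - \nabla\EmpLoss(\betastar)}{\Delta} = \Delta^T \GamHat \Delta .
\end{equation*}
In particular, the prediction error~\eqref{EqnPredError} evaluated at a stationary point is exactly $\nutil^T\GamHat\nutil$ with $\nutil = \betatil - \betastar$, so once the parameter identifications below are in hand the stated prediction bound follows by substituting them into Theorem~\ref{TheoPredMeta}.

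First I would establish the RSC condition. Because the remainder equals $\Delta^T\GamHat\Delta$ for every $\Delta$, it suffices to invoke the lower restricted-eigenvalue bound for the corrected Gram matrix $\GamHat = Z^TZ/n - \Sigma_w$ under sub-Gaussian covariates, established in~\cite{LohWai11a}: with probability at least $1 - c_1\exp(-c_2 n)$, valid once $n \succsim k \log p$,
\begin{equation*}
\Delta^T\GamHat\Delta \ge \tfrac{1}{2}\lambda_{\min}(\Sigma_x)\,\|\Delta\|_2^2 - c\,\tfrac{\log p}{n}\,\|\Delta\|_1^2 \qquad \text{for all } \Delta \in \real^p .
\end{equation*}
This identifies $\alpha_1 = \tfrac12\lambda_{\min}(\Sigma_x)$ and $\tau_1 \asymp 1$, so $4\alpha_1 - 3\mu = 2\lambda_{\min}(\Sigma_x) - 3\mu$ matches the denominators in the stated bounds, and the hypothesis $\tfrac34\mu < \alpha_1$ of Theorem~\ref{TheoEll12Meta} becomes precisely the assumed $\tfrac34\mu < \tfrac12\lambda_{\min}(\Sigma_x)$. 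Since this bound holds globally over $\real^p$ and the loss is quadratic, the second RSC inequality~\eqref{EqnL2Bd} follows automatically from Lemma~\ref{LemGlobalRSC} in Appendix~\ref{AppGenRSC}, furnishing constants $\alpha_2 \asymp \lambda_{\min}(\Sigma_x)$ and $\tau_2 \asymp 1$.

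Next I would control the deviation term $\|\nabla\EmpLoss(\betastar)\|_\infty = \|\GamHat\betastar - \gamhat\|_\infty$, which governs the admissible range of $\lambda$. Using sub-Gaussianity of $(x_i, w_i, \epsilon_i)$ and the explicit form~\eqref{EqnDefnGamHat}, the concentration bound of~\cite{LohWai11a} (their Lemma 2) gives $\|\GamHat\betastar - \gamhat\|_\infty \le c\,\varphi\sqrt{\log p / n}$ with probability at least $1 - c_1\exp(-c_2\log p)$, with $\varphi$ as in~\eqref{EqnPhi}. Thus the lower endpoint of the interval~\eqref{EqnLambdaChoice} is met by the stated choice $\lambda \succsim \varphi\sqrt{\log p/n}$ (which also dominates the $\alpha_2\sqrt{\log p/n}$ term), while the upper endpoint $\lambda \le \alpha_2/(6RL)$ matches $\lambda \le c'/R$ after absorbing $L$ and $\alpha_2 \asymp \lambda_{\min}(\Sigma_x)$ into constants. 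The sample-size requirement $n \ge C\max\{R^2, k\}\log p$ simultaneously absorbs the $n \succsim R^2\max(\tau_1^2,\tau_2^2)/\alpha_2^2 \cdot \log p$ condition of Theorem~\ref{TheoEll12Meta} (the $R^2$ term) and the $n \succsim k\log p$ requirement for the RSC event above.

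Finally, substituting $\alpha_1 = \tfrac12\lambda_{\min}(\Sigma_x)$ and absorbing $L$ together with the numerical prefactors into the constants $c_0, c_0', \widetilde{c_0}, \widetilde{c_0}'$ turns the conclusions~\eqref{EqnStatBounds} and~\eqref{EqnPredict} into the three displayed bounds, holding on the intersection of the two high-probability events, so the overall failure probability is at most $c_1\exp(-c_2\log p)$. The main obstacle is the RSC verification: it is the only genuinely probabilistic ingredient, requiring concentration of the \emph{nonconvex} random quadratic form $\Delta^T\GamHat\Delta$ uniformly over the cone of approximately sparse directions, despite $\GamHat$ itself being negative definite in the regime $p \gg n$. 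Fortunately this is exactly the restricted-eigenvalue analysis carried out in~\cite{LohWai11a}, so the remaining steps reduce to matching constants.
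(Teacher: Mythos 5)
Your proposal is correct and follows essentially the same route as the paper's proof: reduce the Taylor remainder to the quadratic form $\Delta^T \GamHat \Delta$, invoke the restricted-eigenvalue deviation bound from~\cite{LohWai11a} to get~\eqref{EqnLocalRSC} globally with $\alpha_1 = \tfrac{1}{2}\lambda_{\min}(\Sigma_x)$, pass to~\eqref{EqnL2Bd} via Lemma~\ref{LemGlobalRSC}, bound $\|\GamHat\betastar - \gamhat\|_\infty \precsim \varphi\sqrt{\log p/n}$ using the concentration results of~\cite{LohWai11a}, and then apply Theorems~\ref{TheoEll12Meta} and~\ref{TheoPredMeta}. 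The only cosmetic difference is that the paper reaches the RSC inequality by splitting $\Delta^T\GamHat\Delta \ge \Delta^T\Sigma_x\Delta - |\Delta^T(\Sigma_x - \GamHat)\Delta|$ and bounding the second term, whereas you quote the combined bound directly, and you are slightly more explicit than the paper in noting that the prediction bound requires Theorem~\ref{TheoPredMeta}.
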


When $\myrho(\beta) = \lambda \|\beta\|_1$ and $\SIDE(\beta) =
\|\beta\|_1$, taking \mbox{$\lambda \asymp \varphi
  \sqrt{\frac{\log p}{n}}$} and \mbox{$R = b_0 \sqrt{k}$} for some
constant $b_0 \ge \|\betastar\|_2$ yields the required scaling $n
\succsim k \log p$. Hence, the bounds of Corollary~\ref{CorLinear}
agree with bounds previously established in Theorem 1
of~\cite{LohWai11a}. Note, however, that those results are stated only
for a \emph{global minimum} $\betahat$ of the
program~\eqref{EqnLinearEst}, whereas Corollary~\ref{CorLinear} is a
much stronger result holding for \emph{any stationary point}
$\betatil$. Theorem 2 of our earlier paper~\citep{LohWai11a} provides
a rather indirect (algorithmic) route for establishing similar bounds on
$\|\betatil - \betastar\|_1$ and $\|\betatil - \betastar\|_2$, since
the proposed projected gradient descent algorithm may become stuck at
a stationary point. In contrast, our argument here is much more direct
and does not rely on an algorithmic proof. Furthermore, our result is
applicable to a more general class of (possibly nonconvex) penalties
beyond the usual $\ell_1$-norm.

Corollary~\ref{CorLinear} also has important consequences in the case
where pairs $\{(x_i, y_i)\}_{i=1}^\numobs$ from the linear
model~\eqref{EqnLinModel} are observed cleanly without corruption and
$\rho_\lambda$ is a nonconvex penalty.  In that case, the empirical
loss $\EmpLoss$ previously defined~\eqref{EqnCorLinearLoss} is
equivalent to the least-squares loss, modulo a constant factor. Much
existing work, including that of~\cite{FanLi01} and~\cite{ZhaZha12},
first establishes statistical consistency results concerning
\emph{global} minima of the program~\eqref{EqnLinearEst}, then
provides specialized algorithms such as a local linear approximation
(LLA) for obtaining specific local optima that are provably close to the
global optima. However, our results show that \emph{any} optimization
algorithm guaranteed to converge to a stationary point of the program
suffices. See Section~\ref{SecOpt} for a more detailed discussion of
optimization procedures and fast convergence guarantees for obtaining
stationary points. In the fully-observed case, we also have $\GamHat =
\frac{X^TX}{n}$, so the prediction error bound in
Corollary~\ref{CorLinear} agrees with the familiar scaling
$\frac{1}{n} \|X(\betatil - \betastar)\|_2^2 \precsim \frac{k \log
  p}{n}$ appearing in $\ell_1$-theory.

Furthermore, our theory provides a theoretical motivation for why the
usual choice of $a = 3.7$ for linear regression with the SCAD
penalty~\citep{FanLi01} is reasonable. Indeed, as discussed in
Section~\ref{SecNonconvexRegExas}, we have
\begin{equation*}
\mu = \frac{1}{a-1} \approx 0.37
\end{equation*}
in that case. Since $x_i \sim N(0, I)$ in the SCAD simulations, we
have $\frac{3}{4} \mu < \frac{1}{2} \lambda_{\min}(\Sigma_x)$ for the choice $a = 3.7$. For
further comments regarding the parameter $a$ in the SCAD penalty, see
the discussion concerning Figure~\ref{FigLinLogerrs} in
Section~\ref{SecSims}.


\subsection{Generalized Linear Models}
\label{SecGLM}

Moving beyond linear regression, we now consider the case where
observations are drawn from a generalized linear model (GLM). Recall
that a GLM is characterized by the conditional distribution
\begin{align*}
\mprob(y_i \mid x_i, \beta, \sigma) & = \exp \left \{\frac{y_i
  \inprod{\beta}{x_i} - \psi (x_i^T \beta)}{c(\sigma)} \right \},
\end{align*}
where $\sigma > 0$ is a scale parameter and $\psi$ is the cumulant
function, By standard properties of exponential
families~\citep{McCNel89, LehCas98}, we have
\begin{align*}
\psi' (x_i^T \beta) & = \E[y_i \mid x_i, \beta, \sigma].
\end{align*}
In our analysis, we assume that there exists $\alpha_u > 0$ such that
$\psi''(t) \leq \alpha_u$, for all $t \in \real$.  Note that this
boundedness assumption holds in various settings, including linear
regression, logistic regression, and multinomial regression, but does
not hold for Poisson regression. The bound will be necessary to
establish both statistical consistency results in the present section
and fast global convergence guarantees for our optimization algorithms
in Section~\ref{SecOpt}.

The population loss corresponding to the negative log likelihood is
then given by
\begin{align*}
\Loss(\beta) & = -\E[\log \mprob(x_i, y_i)] = -\E[\log \mprob(x_i)] -
\frac{1}{c(\sigma)} \cdot \E[y_i \inprod{\beta}{x_i} - \psi (x_i^T
  \beta)],
\end{align*}
giving rise to the population-level and empirical gradients
\begin{align*}
\nabla \Loss(\beta) & = \frac{1}{c(\sigma)} \cdot \E[(\psi' (x_i^T
  \beta) - y_i) x_i], \quad \text{and} \\
\nabla \EmpLoss(\beta) & = \frac{1}{c(\sigma)} \cdot \frac{1}{n}
\sum_{i=1}^n \big (\psi'(x_i^T \beta) - y_i \big) x_i.
\end{align*}
Since we are optimizing over $\beta$, we will rescale the loss
functions and assume $c(\sigma) = 1$. We may check that if $\betastar$
is the true parameter of the GLM, then $\nabla \Loss(\betastar) = 0$;
furthermore,
\begin{equation*}
\nabla^2 \Loss_n(\beta) = \frac{1}{n} \sum_{i=1}^n \psi''(x_i^T \beta)
x_i x_i^T \succeq 0,
\end{equation*}
so $\Loss_n$ is convex.
	
We will assume that $\betastar$ is sparse and optimize the penalized
maximum likelihood program
\begin{align}
\label{EqnGLMEst}
\betahat & \in \arg \min_{ \SIDE(\beta) \leq R}\left\{\frac{1}{n}
\sum_{i=1}^n \big(\psi(x_i^T \beta) - y_i x_i^T \beta\big) +
\rho_\lambda(\beta)\right\}.
\end{align}
We then have the following corollary, proved in
Appendix~\ref{AppCorGLM}:

\begin{mycorollary}
\label{CorGLM}
Suppose we have i.i.d.\ observations $\{(x_i, y_i)\}_{i=1}^n$ from a
GLM, where the $x_i$'s are sub-Gaussian. Suppose $(\lambda, R)$ are
chosen such that $\betastar$ is feasible and
\begin{equation*}
c\sqrt{\frac{\log p}{n}} \le \lambda \le \frac{c'}{R}.
	\end{equation*}
Then given a sample size $n \ge C R^2 \log p$, any stationary point
$\betatil$ of the nonconvex program~\eqref{EqnGLMEst} satisfies
\begin{equation*}
\|\betatil - \betastar\|_2 \le \frac{c_0\lambda \sqrt{k}}{4\alpha_1-
  3\mu}, \qquad \mbox{and} \qquad \|\betatil - \betastar\|_1 \le
\frac{c_0'\lambda k}{4\alpha_1 - 3\mu},
\end{equation*}
with probability at least $1 - c_1 \exp(-c_2 \log p)$, where
$\|\betastar\|_0 = k$. Here, $\alpha_1$ is a constant depending on
$\|\betastar\|_2$, $\psi$, $\lambda_{\min}(\Sigma_x)$, and the
sub-Gaussian parameter of the $x_i$'s, and we assume $\mu <
2\alpha_1$.
\end{mycorollary}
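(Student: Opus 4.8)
The plan is to derive Corollary~\ref{CorGLM} as a probabilistic consequence of the deterministic Theorem~\ref{TheoEll12Meta}. Since $\rho_\lambda$ already satisfies Assumption~\ref{AsRho} and $\betastar$ is assumed feasible, it remains only to verify, with the claimed probability, that (i) the gradient bound $\|\nabla \EmpLoss(\betastar)\|_\infty \precsim \sqrt{\log p / n}$ holds, so the lower end of the admissible range~\eqref{EqnLambdaChoice} for $\lambda$ is met with $L = 1$; and (ii) the empirical loss satisfies the RSC conditions~\eqref{EqnRSC} with constants $(\alpha_j, \tau_j)$ of the advertised form, which pins down the upper end $\tfrac{\alpha_2}{6RL}$ of~\eqref{EqnLambdaChoice} as the quantity $c'/R$ in the corollary. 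Granting these, the admissible interval collapses to $c\sqrt{\log p / n} \le \lambda \le c'/R$, and substituting the verified constants into~\eqref{EqnStatBounds} yields the stated $\ell_1$- and $\ell_2$-bounds; the hypothesis $\mu < 2\alpha_1$ (via the sharpened requirement $c\mu < \alpha_1$, $c > \tfrac12$, noted in the Remark on $(\alpha_1,\mu)$) keeps the relevant denominator positive.

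For the gradient bound I would exploit that $\nabla \EmpLoss(\betastar) = \tfrac1n \sum_{i=1}^n \big(\psi'(x_i^T\betastar) - y_i\big) x_i$ is an average of independent, mean-zero terms, since $\E[y_i \mid x_i] = \psi'(x_i^T\betastar)$ when $\betastar$ is the true GLM parameter. Under the sub-Gaussian design, each coordinate is a sum of sub-exponential variables whose tails are controlled by $\alpha_u$ (the upper bound on $\psi''$) and the sub-Gaussian scale of $x_i$; a Bernstein-type tail bound together with a union bound over the $p$ coordinates gives $\|\nabla \EmpLoss(\betastar)\|_\infty \precsim \sqrt{\log p / n}$ with probability at least $1 - c_1\exp(-c_2\log p)$.

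The substance of the argument is establishing the RSC conditions. Because $\nabla^2\EmpLoss(\beta) = \tfrac1n\sum_i \psi''(x_i^T\beta)\, x_i x_i^T \succeq 0$, the loss is convex, so by Lemma~\ref{LemLocalRSC} in Appendix~\ref{AppGenRSC} it suffices to establish the \emph{local} inequality~\eqref{EqnLocalRSC}, whence the global inequality~\eqref{EqnL2Bd} follows automatically. Writing the curvature through the integral form of the Hessian,
\[
\inprod{\nabla \EmpLoss(\betastar + \Delta) - \nabla \EmpLoss(\betastar)}{\Delta} = \frac1n \sum_{i=1}^n \int_0^1 \psi''\big(x_i^T(\betastar + t\Delta)\big)\, (x_i^T\Delta)^2 \, dt,
\]
I would restrict to $\|\Delta\|_2 \le 1$ and lower bound $\psi''$ by localizing: on the event $\{|x_i^T(\betastar + t\Delta)| \le T\}$ the continuous, strictly positive function $\psi''$ exceeds some $\alpha_\ell = \alpha_\ell(T,\psi) > 0$, and for a truncation level $T$ chosen in terms of $\|\betastar\|_2$ and the sub-Gaussian scale these events occur with constant probability. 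This reduces the lower bound to controlling the truncated empirical quadratic form $\tfrac1n\sum_i (x_i^T\Delta)^2\,\mathbf{1}\{|x_i^T(\betastar+t\Delta)|\le T\}$ uniformly over $\Delta$ in the $\ell_1$-cone.

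The hard part is precisely this uniform lower bound. Because $\psi''$ decays for bounded-response models such as logistic regression, the curvature cannot be bounded below globally in $\Delta$, which is exactly why the RSC condition is split into two regimes and why~\eqref{EqnLocalRSC} is only \emph{local}; both the restriction to $\|\Delta\|_2\le 1$ and the truncation of $\psi''$ are essential. Showing that the truncated quadratic form dominates $c_1 \lambda_{\min}(\Sigma_x)\|\Delta\|_2^2 - c_2 \tfrac{\log p}{n}\|\Delta\|_1^2$ simultaneously over the cone requires a one-sided concentration/peeling argument for sub-Gaussian designs of the type developed in the RSC literature~\citep{NegRavWaiYu12, LohWai11a}, after which one reads off $\alpha_1 \asymp \alpha_\ell\,\lambda_{\min}(\Sigma_x)$ — depending on $\|\betastar\|_2$, $\psi$, $\lambda_{\min}(\Sigma_x)$, and the sub-Gaussian parameter exactly as claimed — and $\tau_1$ proportional to the sub-Gaussian scale. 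The sample-size requirement $n \succsim R^2\log p$ then guarantees the deviation term is negligible relative to the curvature, the hypotheses of Theorem~\ref{TheoEll12Meta} are met, and the corollary follows.
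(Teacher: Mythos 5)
Your proposal is correct and follows essentially the same route as the paper's proof: bound $\|\nabla\EmpLoss(\betastar)\|_\infty \precsim \sqrt{\log p/n}$ using the mean-zero structure $\E[y_i \mid x_i] = \psi'(x_i^T\betastar)$, the bound $\psi'' \le \alpha_u$, and a union bound over coordinates; establish the local RSC inequality~\eqref{EqnLocalRSC} by the mean-value/truncation argument over the cone; deduce~\eqref{EqnL2Bd} from convexity of $\EmpLoss$ via Lemma~\ref{LemLocalRSC}; and then invoke Theorem~\ref{TheoEll12Meta}. The paper does exactly this, likewise outsourcing the hard uniform concentration step to (the proof of) Proposition 2 of~\cite{NegRavWaiYu12}, so your deferral of that step to the RSC literature mirrors the paper's own treatment.
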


Although $\EmpLoss$ is convex in this case, the overall program may
\emph{not} be convex if the regularizer $\myrho$ is nonconvex, giving
rise to multiple local optima.  For instance, see the simulations of
Figure~\ref{FigLogisticLogerrs} in Section~\ref{SecSims} for a
demonstration of such local optima.  In past work, \cite{BreHua11}
studied logistic regression with SCAD and MCP regularizers, but did
not provide any theoretical results on the quality of the local
optima.  In this context, Corollary~\ref{CorGLM} shows that their
coordinate descent algorithms are guaranteed to converge to a stationary point $\betatil$ within close proximity of the true parameter
$\betastar$.

In the statement of Corollary~\ref{CorGLM}, we choose not to write out
the form of $\alpha_1$ explicitly as in Corollary~\ref{CorLinear},
since it is rather complicated. As explained in the proof of
Corollary~\ref{CorGLM} in Appendix~\ref{AppCorGLM}, the precise form
of $\alpha_1$ may be traced back to Proposition 2
of~\cite{NegRavWaiYu12}.

\subsection{Graphical Lasso}
\label{SecGLasso}
Finally, we specialize our results to the case of the graphical
Lasso. Given $p$-dimensional observations $\{x_i\}_{i=1}^n$, the goal
is to estimate the structure of the underlying (sparse) graphical
model. Recall that the population and empirical losses for the
graphical Lasso are given by
\begin{align*}
\Loss(\Theta) = \trace(\Sigma \Theta) - \log\det(\Theta), \quad
\text{and} \quad \EmpLoss(\Theta) = \trace(\SigHat \Theta) -
\log\det(\Theta),
\end{align*}
where $\SigHat$ is an empirical estimate for the covariance matrix
$\Sigma = \Cov(x_i)$. The objective function for the graphical Lasso
is then given by
\begin{equation}
\label{EqnGLasso}
\ThetaHat \in \arg\min_{\SIDE(\Theta) \le R, \; \Theta \succeq 0}
\left\{\trace(\SigHat \Theta) - \log \det(\Theta) + \sum_{j,k = 1}^p
\myrho(\Theta_{jk})\right\},
\end{equation}
where we apply the (possibly nonconvex) penalty function $\myrho$ to
all entries of $\Theta$, and define $\Omega \defn \big\{ \Theta \in
\real^{\pdim \times \pdim} \mid \Theta = \Theta^T, \; \Theta \succeq 0
\big \}$.

A host of statistical and algorithmic results have been established
for the graphical Lasso in the case of Gaussian observations with an
$\ell_1$-penalty~\citep{BanEtal08, FriEtal08, Rot08, YuaLin07}, and
more recently, for discrete-valued observations, as
well~\citep{LohWai12}. In addition, a version of the graphical Lasso
incorporating a nonconvex SCAD penalty has been
proposed~\citep{FanEtal09}. Our results subsume previous Frobenius
error bounds for the graphical Lasso and again imply that even in the
presence of a nonconvex regularizer, all stationary points of the nonconvex
program~\eqref{EqnGLasso} remain close to the true inverse covariance
matrix $\Thetastar$.

As suggested by~\cite{LohWai12}, the graphical Lasso easily
accommodates systematically corrupted observations, with the only
modification being the form of the sample covariance matrix
$\SigHat$. Just as in Corollary~\ref{CorLinear}, the magnitude and
form of corruption would occur as a prefactor in the deviation
condition captured in~\eqref{EqnSigConc} below; for
instance, in the case of $\SigHat = \frac{Z^TZ}{n} - \Sigma_w$, corresponding to
additive noise in the $x_i$'s, the bound~\eqref{EqnSigConc} would
involve a prefactor of $\sigma_z^2$ rather than $\sigma_x^2$, where
$\sigma_z^2$ and $\sigma_x^2$ are the sub-Gaussian parameters of $z_i$
and $x_i$, respectively.

Further note that the program~\eqref{EqnGLasso} is always useful for
obtaining a consistent estimate of a sparse inverse covariance matrix,
regardless of whether the $x_i$'s are drawn from a distribution for
which $\Thetastar$ is relevant in estimating the edges of the
underlying graph. Note that other variants of the graphical Lasso
exist in which only off-diagonal entries of $\Theta$ are penalized,
and similar results for statistical consistency hold in that
case. Here, we assume that all entries are penalized equally in order to
simplify our arguments. The same framework is considered
by~\cite{FanEtal09}.

We have the following result, proved in
Appendix~\ref{AppCorGLasso}. The statement of the corollary is purely
deterministic, but in cases of interest (say, sub-Gaussian
observations), the deviation condition~\eqref{EqnSigConc} holds with
probability at least $1 - c_1 \exp(-c_2 \log p)$, translating into the
Frobenius norm bound~\eqref{EqnFrobBound} holding with the same
probability.

\begin{mycorollary}
\label{CorGLasso}
Suppose we have an estimate $\SigHat$ of the covariance matrix
$\Sigma$ based on (possibly corrupted) observations $\{x_i\}_{i=1}^n$,
such that
\begin{equation}
\label{EqnSigConc}
\opnorm{\SigHat - \Sigma}_{\max} \le c_0 \sqrt{\frac{\log p}{n}}.
\end{equation}
Also suppose $\Thetastar$ has at most $s$ nonzero entries. Suppose
$(\lambda, R)$ are chosen such that $\Thetastar$ is feasible and
\begin{equation*}
	c \sqrt{\frac{\log p}{n}} \le \lambda \le \frac{c'}{R}.
\end{equation*}
Suppose $\frac{3}{4} \mu < \left(\opnorm{\Thetastar}_2 + 1\right)^{-2}$. Then
with a sample size $\numobs > C s \log \pdim$, for a sufficiently
large constant $C > 0$, any stationary point $\Thetatil$ of the nonconvex
program~\eqref{EqnGLasso} satisfies
\begin{equation}
\label{EqnFrobBound}
\opnorm{\Thetatil - \Thetastar}_F \le \frac{c_0'\lambda
  \sqrt{s}}{4\left(\opnorm{\Thetastar}_2 + 1\right)^{-2} - 3\mu}.
\end{equation}
\end{mycorollary}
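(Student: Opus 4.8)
The plan is to recognize Corollary~\ref{CorGLasso} as a direct instance of Theorem~\ref{TheoEll12Meta}, under the dictionary that identifies the Frobenius norm $\opnorm{\cdot}_F$ with the $\ell_2$-norm, the elementwise norm $\opnorm{\cdot}_{\max}$ with $\|\cdot\|_\infty$, and the elementwise $\ell_1$-norm with $\|\cdot\|_1$, while the sparsity parameter $k$ is replaced by $s$, the number of nonzero entries of $\Thetastar$. Since both the SCAD and MCP penalties enter with $L = 1$, the only model-specific work is (i) to control the gradient of $\EmpLoss$ at $\Thetastar$, supplying the lower end of the $\lambda$-range, and (ii) to verify the two RSC inequalities~\eqref{EqnRSC}; the bound~\eqref{EqnStatBounds} then yields~\eqref{EqnFrobBound}. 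As the statement is deterministic given~\eqref{EqnSigConc}, I simply take~\eqref{EqnSigConc} as a hypothesis.

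The gradient step is immediate. Differentiating $\EmpLoss(\Theta) = \trace(\SigHat\Theta) - \log\det(\Theta)$ gives $\nabla\EmpLoss(\Theta) = \SigHat - \Theta^{-1}$, and since $\Thetastar = \Sigma^{-1}$ we obtain $\nabla\EmpLoss(\Thetastar) = \SigHat - \Sigma$. Thus the deviation hypothesis~\eqref{EqnSigConc} reads exactly $\|\nabla\EmpLoss(\Thetastar)\|_{\max} \le c_0\sqrt{\log p / n}$, which verifies the lower end of the admissible range~\eqref{EqnLambdaChoice} once the universal constant $c$ is taken large enough relative to $\alpha_2$.

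The crux is the RSC verification, and here I would exploit the fact that the $-\log\det$ barrier supplies curvature in every direction. Writing $A_t = \Thetastar + t\Delta$ and using $\frac{d}{dt}A_t^{-1} = -A_t^{-1}\Delta A_t^{-1}$, I would represent the gradient difference as an integral,
\begin{equation*}
\nabla\EmpLoss(\Thetastar + \Delta) - \nabla\EmpLoss(\Thetastar) = \int_0^1 A_t^{-1}\Delta A_t^{-1}\,dt,
\end{equation*}
so that taking the trace inner product with $\Delta$ gives
\begin{equation*}
\inprod{\nabla\EmpLoss(\Thetastar + \Delta) - \nabla\EmpLoss(\Thetastar)}{\Delta} = \int_0^1 \opnorm{A_t^{-1/2}\Delta A_t^{-1/2}}_F^2\,dt \;\ge\; \int_0^1 \frac{\opnorm{\Delta}_F^2}{\lambda_{\max}(A_t)^2}\,dt.
\end{equation*}
For $\opnorm{\Delta}_F \le 1$, the bound $\lambda_{\max}(A_t) \le \opnorm{\Thetastar}_2 + \opnorm{\Delta}_2 \le \opnorm{\Thetastar}_2 + 1$ (using $\opnorm{\Delta}_2 \le \opnorm{\Delta}_F \le 1$) then delivers the local inequality~\eqref{EqnLocalRSC} with $\alpha_1 = (\opnorm{\Thetastar}_2 + 1)^{-2}$ and $\tau_1 = 0$. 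Because $\EmpLoss$ is convex on the positive-definite cone, the companion inequality~\eqref{EqnL2Bd} for $\opnorm{\Delta}_F \ge 1$ follows with $\tau_2 = 0$ from Lemma~\ref{LemLocalRSC}, so the full RSC condition holds with vanishing tolerance terms.

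With these ingredients, Theorem~\ref{TheoEll12Meta} applies essentially verbatim: the curvature hypothesis $\frac{3}{4}\mu < \alpha_1$ is precisely the assumed $\frac{3}{4}\mu < (\opnorm{\Thetastar}_2 + 1)^{-2}$, and~\eqref{EqnStatBounds} with $L = 1$ and $k = s$ yields~\eqref{EqnFrobBound} after absorbing constants into $c_0'$. The sample-size requirement $n > Cs\log p$ enters not through the (now vacuous) $\tau$-condition but through compatibility of the window $c\sqrt{\log p / n} \le \lambda \le c'/R$ with the feasibility radius $R \asymp \sqrt{s}$ needed to contain $\Thetastar$; the constraint $\sqrt{s\log p / n} \lesssim 1$ is exactly $n \gtrsim s\log p$. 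The main obstacle I anticipate is a well-posedness subtlety hidden in the RSC step: the integral representation, and indeed the gradient itself, are only defined when $A_t \succ 0$ for every $t \in [0,1]$. This is where I would invoke that any stationary point $\Thetatil$ of~\eqref{EqnGLasso} is strictly positive definite---forced by the $-\log\det$ barrier, whose gradient diverges at the boundary of the PSD cone---so that the entire segment from $\Thetastar$ to $\Thetatil$ remains interior and the curvature lower bound applies along the directions actually traversed.
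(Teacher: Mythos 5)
Your proposal is correct and follows essentially the same route as the paper: both verify the RSC conditions~\eqref{EqnRSC} with $\alpha_1 = \alpha_2 = \left(\opnorm{\Thetastar}_2 + 1\right)^{-2}$ and $\tau_1 = \tau_2 = 0$ by lower-bounding the curvature of $-\log\det$ along the segment from $\Thetastar$ (you via an integral representation of the gradient difference, the paper via the mean value theorem and the Kronecker form $\nabla^2 \EmpLoss(\Theta) = (\Theta \otimes \Theta)^{-1}$), both identify $\nabla \EmpLoss(\Thetastar) = \SigHat - \Sigma$ to validate the choice of $\lambda$ from~\eqref{EqnSigConc}, and both conclude by invoking Theorem~\ref{TheoEll12Meta}. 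The only cosmetic difference is that the paper passes to the second RSC inequality via Lemma~\ref{LemGlobalRSC}, whereas you use convexity and Lemma~\ref{LemLocalRSC}; your citation is if anything the cleaner one, since the curvature bound is only established for $\opnorm{\Delta}_F \le 1$.
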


When $\rho$ is simply the $\ell_1$-penalty, the
bound~\eqref{EqnFrobBound} from Corollary~\ref{CorGLasso} matches the
minimax rates for Frobenius norm estimation of an $s$-sparse inverse
covariance matrix~\citep{Rot08, RavEtal11}.

\subsection{Proof of Theorems~\ref{TheoEll12Meta} and~\ref{TheoPredMeta}}
\label{SecTheoProof}

We now turn to the proofs of our two main theorems. \\

\textbf{Proof of Theorem~\ref{TheoEll12Meta}:} Introducing the shorthand $\nutil \defn \betatil - \betastar$, we
 begin by proving that $\|\nutil\|_2 \leq 1$.  If not, then~\eqref{EqnL2Bd} gives the lower bound
\begin{align}
\label{EqnL2BdTwo}
\inprod{\nabla \Loss_n(\betatil) - \nabla \Loss_n(\betastar)}{\nutil}
& \geq \alpha_2 \|\nutil\|_2 - \tau_2 \sqrt{\frac{\log p}{n}}
\|\nutil\|_1.
\end{align}
Since $\betastar$ is feasible, we may take $\beta = \betastar$ in~\eqref{EqnFirstOrder}, and combining with~\eqref{EqnL2BdTwo} yields
\begin{align}
\label{EqnPlug}
\inprod{-\nabla \rho_\lambda(\betatil) - \nabla
  \EmpLoss(\betastar)}{\nutil} & \geq \alpha_2 \|\nutil\|_2 - \tau_2
\sqrt{\frac{\log \pdim}{\numobs}} \|\nutil\|_1 .
\end{align}
By H\"{o}lder's inequality, followed by the triangle inequality, we
also have
\begin{align*}
\inprod{-\nabla \myrho(\betatil) - \nabla \EmpLoss(\betastar)}{\nutil}
& \leq \Big \{ \|\nabla \myrho(\betatil)\|_\infty + \|\nabla
\EmpLoss(\betastar)\|_\infty \Big \} \, \|\nutil\|_1 \\
& \stackrel{(i)}{\leq} \left \{ \lambda L + \frac{\lambda L}{2} \right
\} \, \|\nutil\|_1,
\end{align*}
where inequality (i) follows since $\| \nabla
\EmpLoss(\betastar)\|_\infty \leq \frac{\lambda L}{2}$ by the
bound~\eqref{EqnLambdaChoice}, and \mbox{$\|\nabla
  \myrho(\betatil)\|_\infty \le \lambda L$} by Lemma~\ref{LemRhoCond}
in Appendix~\ref{AppGeneralProps}.  Combining this upper bound with~\eqref{EqnPlug} and rearranging then yields
\begin{align*}
\|\nutil\|_2 & \le \frac{\|\nutil\|_1}{\alpha_2} \left(\frac{3 \lambda
  L}{2} + \tau_2 \sqrt{\frac{\log p}{n}}\right) \le
\frac{2R}{\alpha_2} \left(\frac{3 \lambda L}{2} + \tau_2
\sqrt{\frac{\log \pdim}{\numobs}}\right).
\end{align*}
By our choice of $\lambda$ from~\eqref{EqnLambdaChoice} and
the assumed lower bound on the sample size $\numobs$, the right hand
side is at most $1$, so $\|\nutil\|_2 \leq 1$, as claimed.  \\

\vspace*{.1in}

Consequently, we may apply~\eqref{EqnLocalRSC}, yielding
the lower bound
\begin{align}
\label{EqnDeltaRSC}
\inprod{\nabla \Loss_n(\betatil) - \nabla \Loss_n(\betastar)}{\nutil}
& \geq \alpha_1 \|\nutil\|_2^2 - \tau_1 \frac{\log p}{n}
\|\nutil\|_1^2.
\end{align}
Since the function $\doubrho(\beta) \defn \myrho(\beta) +
\frac{\mu}{2} \|\beta\|_2^2$ is convex by assumption, we have
\begin{align*}
\doubrho(\betastar) - \doubrho(\betatil) & \geq \inprod{\nabla
  \doubrho(\betatil)}{\betastar - \betatil} \; = \; \inprod{\nabla
  \myrho(\betatil) + \mu \betatil}{\betastar - \betatil},
\end{align*}
implying that
\begin{align}
\label{EqnRhotilConvex} 
\inprod{\nabla \myrho(\betatil)}{\betastar - \betatil} \le
\myrho(\betastar) - \myrho(\betatil) + \frac{\mu}{2} \|\betatil -
\betastar\|_2^2.
\end{align}
Combining~\eqref{EqnDeltaRSC} with~\eqref{EqnFirstOrder} and~\eqref{EqnRhotilConvex}, we
obtain
\begin{equation*}
\alpha_1 \|\nutil\|_2^2 - \tau_1 \frac{\log p}{n} \|\nutil\|_1^2 \le - \inprod{\nabla \Loss_n(\betastar)}{\nutil} + \rho_\lambda(\betastar) - \rho_\lambda(\betatil) + \frac{\mu}{2} \|\betatil - \betastar\|_2^2.
\end{equation*}
Rearranging and using H\"{o}lder's inequality, we then have
\begin{align}
\label{EqnSanta}
\left(\alpha_1 - \frac{\mu}{2}\right) \|\nutil\|_2^2 & \le \rho_\lambda(\betastar) - \rho_\lambda(\betatil) + \|\nabla \Loss_n(\betastar)\|_\infty \cdot \|\nutil\|_1 + \tau_1 \frac{\log p}{n} \|\nutil\|_1^2 \notag \\
& \le \rho_\lambda(\betastar) - \rho_\lambda(\betatil) + \left(\|\nabla \Loss_n(\betastar)\|_\infty + 4R\tau_1 \frac{\log p}{n}\right) \|\nutil\|_1.
\end{align}
Note that by our assumptions, we have
\begin{equation*}
\|\nabla \Loss_n(\betastar)\|_\infty + 4R\tau_1 \frac{\log p}{n} \le \frac{\lambda L}{4} + \alpha_2 \sqrt{\frac{\log p}{n}} \le \frac{\lambda L}{2}.
\end{equation*}
Combining this with~\eqref{EqnSanta} and~\eqref{EqnXmas} in Lemma~\ref{LemRhoCond} in Appendix~\ref{AppGeneralProps}, as well as the subadditivity of $\rho_\lambda$, we then have
\begin{align*}
\left(\alpha_1 - \frac{\mu}{2}\right) \|\nutil\|_2^2 & \le \rho_\lambda(\betastar) - \rho_\lambda(\betatil) + \frac{\lambda L}{2} \cdot \left(\frac{\rho_\lambda(\nutil)}{\lambda L} + \frac{\mu}{2\lambda L} \|\nutil\|_2^2\right) \\
& \le \rho_\lambda(\betastar) - \rho_\lambda(\betatil) + \frac{\rho_\lambda(\betastar) + \rho_\lambda(\betatil)}{2} + \frac{\mu}{4} \|\nutil\|_2^2,
\end{align*}
implying that
\begin{equation}
\label{EqnReindeer}
0 \le \left(\alpha_1 - \frac{3\mu}{4}\right) \|\nutil\|_2^2 \le \frac{3}{2} \rho_\lambda(\betastar) - \frac{1}{2} \rho_\lambda(\betatil).
\end{equation}
In particular, we have $3 \rho_\lambda(\betastar) - \rho_\lambda(\betatil) \ge 0$, so we may apply Lemma~\ref{LemEll1Reg} in Appendix~\ref{AppGeneralProps} to conclude that
\begin{equation}
\label{EqnFrosty}
3 \rho_\lambda(\betastar) - \rho_\lambda(\betatil) \le 3\lambda L \|\nutil_A\|_1 - \lambda L \|\nutil_{A^c}\|_1,
\end{equation}
where $A$ denotes the index set of the $k$ largest elements of $\betatil - \betastar$ in magnitude. In particular, we have the cone condition
\begin{equation}
\label{EqnBlueBunny}
\|\nutil_{A^c}\|_1 \le 3 \|\nutil_A\|_1.
\end{equation}
Substituting~\eqref{EqnFrosty} into~\eqref{EqnReindeer}, we then have
\begin{align*}
\left(2\alpha_1 - \frac{3\mu}{2}\right) \|\nutil\|_2^2 \; \le \; 3 \lambda L \|\nutil_A\|_1 - \lambda L \|\nutil_{A^c}\|_1 \; \le \; 3 \lambda L \|\nutil_A\|_1 \; \le \; 3 \lambda L \sqrt{k} \|\nutil\|_2,
\end{align*}
from which we conclude that
\begin{equation*}
\|\nutil\|_2 \le \frac{6 \lambda L \sqrt{k}}{4 \alpha_1 - 3\mu},
\end{equation*}
as wanted. The $\ell_1$-bound
follows from the $\ell_2$-bound and the observation that
\begin{equation*}
\|\nutil\|_1 \le \|\nutil_A\|_1 + \|\nutil_{A^c}\|_1 \le 4 \|\nutil_A\|_1 \le 4 \sqrt{k} \|\nutil\|_2,
\end{equation*}
using the cone inequality~\eqref{EqnBlueBunny}. \\

\textbf{Proof of Theorem~\ref{TheoPredMeta}:} In order to establish~\eqref{EqnPredict}, note that
combining the first-order condition~\eqref{EqnFirstOrder} with the
upper bound~\eqref{EqnRhotilConvex}, we have
\begin{align}
\label{EqnElf}
	\inprod{\nabla \Loss_n(\betatil) - \nabla
          \Loss_n(\betastar)}{\nutil} & \le \inprod{- \nabla
          \rho_\lambda(\betatil) - \nabla \Loss_n(\betastar)}{\nutil}
        \notag \\ 
& \le \rho_\lambda(\betastar) - \rho_\lambda(\betatil) +
        \frac{\mu}{2}\|\nutil\|_2^2 + \|\nabla
        \Loss_n(\betastar)\|_\infty \cdot \|\nutil\|_1.
\end{align}
Furthermore, as noted earlier, Lemma~\ref{LemRhoCond} in Appendix~\ref{AppGeneralProps} implies that
\begin{equation*}
\|\nabla \Loss_n(\betastar)\|_\infty \cdot \|\nutil\|_1 \le \frac{\lambda L}{2} \cdot \left(\frac{\rho_\lambda(\betastar) + \rho_\lambda(\betatil)}{\lambda L} + \frac{\mu}{2\lambda L} \|\nutil\|_2^2\right) \le \frac{\rho_\lambda(\betastar) + \rho_\lambda(\betatil)}{2} + \frac{\mu}{4} \|\nutil\|_2^2.
\end{equation*}
Substituting this into~\eqref{EqnElf} then gives
\begin{align*}
\inprod{\nabla \Loss_n(\betatil) - \nabla \Loss_n(\betastar)}{\nutil} & \le \frac{3}{2} \rho_\lambda(\betastar) - \frac{1}{2} \rho_\lambda(\betatil) + \frac{3\mu}{4} \|\nutil\|_2^2 \\
& \le \frac{3 \lambda L}{2} \|\nutil_{A}\|_1 - \frac{\lambda L}{2} \|\nutil_{A^c}\|_1 + \frac{3\mu}{4} \|\nutil\|_2^2 \\
& \le \frac{3\lambda L \sqrt{k}}{2} \|\nutil\|_2 + \frac{3\mu}{4} \|\nutil\|_2^2,
\end{align*}
so substituting in the $\ell_2$-bound~\eqref{EqnStatBounds} yields the
desired result.


\section{Optimization Algorithms}
\label{SecOpt}

We now describe how a version of composite gradient
descent~\citep{Nes07} may be applied to efficiently optimize the
nonconvex program~\eqref{EqnNonconvexReg}, and show that it enjoys a
linear rate of convergence under suitable conditions.  In this
section, we focus exclusively on a version of the optimization problem
with the side function
\begin{equation}
\label{EqnDefnSIDESPEC}
\SIDESPEC(\beta) \defn \frac{1}{\lambda} \Big \{ \rho_\lambda(\beta) +
\frac{\mu}{2} \|\beta\|_2^2 \Big \}.
\end{equation}
Note that this choice of $\SIDESPEC$ is convex by
Assumption~\ref{AsRho}. We may then write the
program~\eqref{EqnNonconvexReg} as
\begin{align}
\label{EqnAlternative}
\betahat & \in \arg \min_{\SIDESPEC(\beta) \leq R, \; \beta \in
  \Omega} \Big\{ \underbrace{\big(\Loss_n(\beta) - \frac{\mu}{2}
  \|\beta\|_2^2 \big)}_{\EmpLossBarSub} + \lambda \SIDESPEC(\beta)
\Big\}.
\end{align}
In this way, the objective function decomposes nicely into a sum of a
differentiable but nonconvex function and a possibly nonsmooth but
convex penalty. Applied to the representation~\eqref{EqnAlternative}
of the objective function, the composite gradient descent procedure
of~\cite{Nes07} produces a sequence of iterates
$\{\betait{\iter}\}_{\iter=0}^\infty$ via the updates
\begin{align}
\label{EqnCompGradUpdate}
\betait{\iter+1} \in \arg\min_{\SIDESPEC(\beta) \leq R, \; \beta \in
  \Omega} \left\{\frac{1}{2} \left\|\beta - \left(\betait{\iter} -
\frac{\nabla \EmpLossBar(\betait{\iter})}{\eta}\right)\right\|_2^2 +
\frac{\lambda}{\eta} \SIDESPEC(\beta) \right\},
\end{align}
where $\frac{1}{\eta}$ is the stepsize.  As discussed in
Section~\ref{SecUpdates}, these updates may be computed in a
relatively straightforward manner.


\subsection{Fast Global Convergence}
\label{SecFastGlobal}

The main result of this section is to establish that the algorithm
defined by the iterates~\eqref{EqnCompGradUpdate} converges very
quickly to a $\delta$-neighborhood of any global optimum, for all
tolerances $\delta$ that are of the same order (or larger) than the
statistical error.

We begin by setting up the notation and assumptions underlying our
result.  The \emph{Taylor error} around the vector $\beta_2$ in the
direction $\beta_1 - \beta_2$ is given by
\begin{align}
\label{EqnTaylorDefn}
\scriptT(\beta_1, \beta_2) & \defn \EmpLoss(\beta_1) -
\EmpLoss(\beta_2) - \inprod{\nabla \EmpLoss(\beta_2)}{\beta_1 -
  \beta_2}.
\end{align}
We analogously define the Taylor error $\scriptTBar$ for the modified
loss function $\Lossbar_n$, and note that
\begin{equation}
\label{EqnTCompare}
\scriptTBar(\beta_1, \beta_2) = \scriptT(\beta_1, \beta_2) -
\frac{\mu}{2} \|\beta_1 - \beta_2\|_2^2.
\end{equation}
For all vectors $\beta_2 \in \ball_2(3) \cap \ball_1(R)$, we require
the following form of restricted strong convexity:
\begin{subnumcases}{
\label{EqnRSC2}
\scriptT(\beta_1, \beta_2) \ge}
\label{EqnLocalRSCFirst}
\alpha_1 \|\beta_1 - \beta_2\|_2^2 - \tau_1 \frac{\log p}{n} \|\beta_1
- \beta_2\|_1^2, & \text{$\forall \|\beta_1 - \beta_2\|_2 \le 3$}, \\
\label{EqnLocalRSCSecond}
\alpha_2 \|\beta_1 - \beta_2\|_2 - \tau_2 \sqrt{\frac{\log p}{n}}
\|\beta_1 - \beta_2\|_1, & \text{$\forall \|\beta_1 - \beta_2\|_2 \ge
  3$}.
\end{subnumcases}
The conditions~\eqref{EqnRSC2} are similar but not identical to the
earlier RSC conditions~\eqref{EqnRSC}.  The main difference is that we
now require the Taylor difference to be bounded below uniformly over
$\beta_2 \in \ball_2(3) \cap \ball_1(R)$, as opposed to for a fixed
$\beta_2 = \betastar$.  In addition, we assume an analogous upper
bound on the Taylor series error:
\begin{align}
\label{EqnRSM}
\scriptT(\beta_1, \beta_2) & \leq \alpha_3 \|\beta_1 - \beta_2\|_2^2 +
\tau_3 \frac{\log \pdim}{\numobs} \|\beta_1 - \beta_2\|_1^2, \qquad
\mbox{for all $\beta_1, \beta_2 \in \Omega$,}
\end{align}
a condition referred to as \emph{restricted smoothness} in past
work~\citep{AgaEtal12}. Throughout this section, we assume $2 \alpha_i
> \mupar$ for all $i$, where $\mupar$ is the coefficient ensuring the
convexity of the function $\SIDESPEC$ from~\eqref{EqnDefnSIDESPEC}.  Furthermore, we define $\alpha =
\min \{ \alpha_1, \alpha_2 \}$ and $\tau = \max \{\tau_1, \tau_2,
\tau_3 \}$. \\

The following theorem applies to any population loss function
$\PopLoss$ for which the population minimizer $\betastar$ is
$\kdim$-sparse and $\|\betastar\|_2 \leq 1$. Similar results could be
derived for general $\|\betastar\|_2$, with the radius of the RSC
condition~\eqref{EqnLocalRSCFirst} replaced by $3 \|\betastar\|_2$ and
Lemma~\ref{LemL2Assump} in Section~\ref{SecProofFG} adjusted
appropriately, but we only include the analysis for $\|\betastar\|_2
\le 1$ in order to simplify our exposition. We also assume the scaling
$\numobs > C \kdim \log \pdim$, for a constant $C$ depending on the
$\alpha_i$'s and $\tau_i$'s.  Note that this scaling is reasonable,
since no estimator of a $\kdim$-sparse vector in $\pdim$ dimensions
can have low $\ell_2$-error unless the condition holds
(see~\citealp{RasEtal11} for minimax rates). We show that the
composite gradient updates~\eqref{EqnCompGradUpdate} exhibit a type of
\emph{globally geometric convergence} in terms of the quantity
\begin{align}
\label{EqnKappa}
\kappa \defn \frac{1 - \frac{2\alpha - \mupar}{8\eta} + \HACK}{1 -
  \HACK}, \qquad \mbox{where} \quad \HACK \defn \frac{c \tau k
  \frac{\log \pdim}{\numobs}} {2\alpha - \mupar}.
\end{align}
Under the stated scaling on the sample size, we are guaranteed that
$\kappa \in (0,1)$, so it is a \emph{contraction factor}.  Roughly
speaking, we show that the squared optimization error will fall below
$\delta^2$ within $T \asymp \frac{\log(1/\delta^2)}{\log(1/\kappa)}$
iterations. More precisely, our theorem guarantees $\delta$-accuracy
for all iterations larger than
\begin{align}
\label{EqnTBound}
T^*(\delta) & \defn \frac{2\log\left(\frac{\phi(\beta^0) -
    \phi(\betahat)}{\delta^2}\right)}{\log(1/\kappa)} + \left ( 1 +
\frac{\log 2}{\log(1/\kappa)}\right) \, \log \log\left(\frac{\lambda
  RL}{\delta^2}\right),
\end{align}
where $\phi(\beta) \defn \EmpLoss(\beta) + \myrho(\beta)$ denotes the
composite objective function.  As clarified in the theorem statement,
the squared tolerance $\delta^2$ is not allowed to be arbitrarily
small, which would contradict the fact that the composite gradient
method may converge to a stationary point. However, our theory allows
$\delta^2$ to be of the same order as the squared \emph{statistical
  error} $\epsstat^2 = \|\betahat - \betastar\|_2^2$, the distance
between a fixed global optimum and the target parameter $\betastar$.
From a statistical perspective, there is no point in optimizing beyond
this tolerance.\\

\noindent With this setup, we now turn to a precise statement of our
main optimization-theoretic result. As with
Theorems~\ref{TheoEll12Meta} and~\ref{TheoPredMeta}, the statement of
Theorem~\ref{TheoFastGlobal} is entirely deterministic.

\begin{mytheorem}
\label{TheoFastGlobal}
Suppose the empirical loss $\EmpLoss$ satisfies the RSC/RSM
conditions~\eqref{EqnRSC2} and~\eqref{EqnRSM}, and suppose the
regularizer $\myrho$ satisfies Assumption~\ref{AsRho}. Suppose
$\betahat$ is any global minimum of the
program~\eqref{EqnAlternative}, with regularization parameters chosen
such that
\begin{equation*}
\frac{8}{L} \cdot \max\left\{\|\nabla \EmpLoss(\betastar)\|_\infty, \;
c'\tau \sqrt{\frac{\log \pdim}{\numobs} } \right\} \le \lambda \le \frac{c'' \; \alpha}{RL}.
\end{equation*}
Suppose $\mu < 2\alpha$. Then for any stepsize parameter $\eta \ge
\max\{2 \alpha_3 - \mu, \, \mu\}$ and tolerance $\delta^2 \geq \frac{c
  \epsstat^2}{1- \kappa} \cdot \frac{k \log p}{n}$, we have
\begin{align}
\label{EqnEllTwoBound}
\|\beta^t - \betahat\|^2_2 & \; \leq \; \frac{4}{2\alpha - \mupar} \;
\left( \delta^2 + \frac{\delta^4}{\tau} + c \tau \frac{k \log
  \pdim}{\numobs} \epsstat^2 \right), \qquad \mbox{$\forall t \geq
  T^*(\delta)$.}
\end{align}
\end{mytheorem}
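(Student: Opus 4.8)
The plan is to adapt the geometric-convergence analysis of composite gradient descent under restricted strong convexity and smoothness, due to~\cite{AgaEtal12}, to the present nonconvex setting via the convex/nonconvex splitting in~\eqref{EqnAlternative}. First I would record the consequences of this splitting for the differentiable part $\EmpLossBar(\beta) = \EmpLoss(\beta) - \frac{\mu}{2}\|\beta\|_2^2$: by the identity~\eqref{EqnTCompare} combined with~\eqref{EqnRSC2} and~\eqref{EqnRSM}, the Taylor error $\scriptTBar$ of $\EmpLossBar$ satisfies a restricted strong convexity lower bound with effective curvature $\alpha - \frac{\mu}{2} > 0$ (using $\mu < 2\alpha$) and a restricted smoothness upper bound with effective parameter $\alpha_3 - \frac{\mu}{2} > 0$. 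The stepsize condition $\eta \ge \max\{2\alpha_3 - \mu, \mu\}$ then guarantees that $\eta$ dominates twice the effective smoothness, which is precisely what powers the descent step.

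The heart of the argument is a one-step recursion for the function gap $\phi(\betait{\iter}) - \phi(\betahat)$, where $\phi(\beta) = \EmpLoss(\beta) + \myrho(\beta)$. Viewing~\eqref{EqnCompGradUpdate} as a proximal step and testing optimality of $\betait{\iter+1}$ against the feasible point $\betahat$, I would bound the linearization error of $\EmpLossBar$ from above by restricted smoothness and from below by restricted strong convexity, then rearrange to obtain
\[
\phi(\betait{\iter+1}) - \phi(\betahat) \;\le\; \kappa\,\big(\phi(\betait{\iter}) - \phi(\betahat)\big) + (\text{residual}),
\]
with $\kappa$ exactly as in~\eqref{EqnKappa}. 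The term $\HACK$ appearing in $\kappa$ is precisely the price paid for the $\tau\frac{\log \pdim}{\numobs}\|\cdot\|_1^2$ slack in the RSC/RSM conditions, and the residual is controlled by $\|\betait{\iter} - \betahat\|_1$, $\|\betait{\iter+1} - \betahat\|_1$, and the statistical error $\epsstat$.

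The main obstacle is controlling the $\ell_1$-norms $\|\betait{\iter} - \betahat\|_1$ in the residual, which \emph{a priori} are only bounded by $2R$ through feasibility---far too crude for a geometric rate. To handle this I would run the two-phase bootstrapping argument of~\cite{AgaEtal12}. In the first phase, using the crude bound $\|\betait{\iter} - \betahat\|_1 \le 2R$, the recursion forces the gap to contract geometrically until it drops below a threshold of order $\epsstat^2$; I would then prove a cone lemma showing that once the gap is this small, the $\kdim$-sparsity of $\betastar$, the feasibility of $\betahat$, and its statistical proximity $\epsstat$ force $\betait{\iter} - \betahat$ into an approximate cone, so that $\|\betait{\iter} - \betahat\|_1 \lesssim \sqrt{\kdim}\,\|\betait{\iter} - \betahat\|_2$. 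Feeding this tighter bound back into the recursion and refining the effective $\ell_1$-radius over successive epochs is what produces the doubly-logarithmic $\log\log(\lambda RL/\delta^2)$ term in $T^*(\delta)$; tracking the epochs while keeping $\kappa \in (0,1)$ uniformly is the delicate bookkeeping that I expect to be the hardest part.

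Finally, I would convert the function-gap bound into~\eqref{EqnEllTwoBound}. Once $\phi(\betait{\iter}) - \phi(\betahat) \le \delta^2 + \delta^4/\tau$ for $\iter \ge T^*(\delta)$, I combine the first-order stationarity of the global optimum $\betahat$ with the weak convexity of $\myrho$ (through $\doubrho$) and the RSC lower bound for $\EmpLossBar$ applied to the pair $(\betait{\iter}, \betahat)$---which is legitimate since $\|\betastar\|_2 \le 1$ and statistical proximity keep $\betahat \in \ball_2(3) \cap \ball_1(R)$, explaining the radius $3$ in~\eqref{EqnRSC2}---to deduce $(\alpha - \frac{\mu}{2})\|\betait{\iter} - \betahat\|_2^2 \le \phi(\betait{\iter}) - \phi(\betahat) + \tau\frac{\log \pdim}{\numobs}\|\betait{\iter} - \betahat\|_1^2$. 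Invoking the cone condition to replace the $\ell_1$-term by $c\tau\frac{\kdim\log \pdim}{\numobs}(\|\betait{\iter} - \betahat\|_2^2 + \epsstat^2)$ and absorbing the first piece under the scaling $\numobs > C\kdim\log\pdim$ then yields the claimed bound, with the prefactor $\frac{4}{2\alpha - \mu}$ and the irreducible residual $c\tau\frac{\kdim\log\pdim}{\numobs}\epsstat^2$ matching the tolerance condition on $\delta^2$.
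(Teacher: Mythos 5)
Your plan follows the paper's own route: the same convex/nonconvex splitting via \eqref{EqnTCompare}, a one-step recursion for the gap $\phi(\beta^t)-\phi(\betahat)$ with the contraction factor \eqref{EqnKappa}, a cone lemma (the paper's Lemma~\ref{LemICB}) to replace the crude feasibility bound $\|\beta^t-\betahat\|_1\le 2R$, the epoch bookkeeping of~\cite{AgaEtal12}, and a final conversion of the function-gap bound into \eqref{EqnEllTwoBound} using stationarity of $\betahat$ and the RSC lower bound.

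There is, however, one genuine gap. Every application of the quadratic RSC bound \eqref{EqnLocalRSCFirst} to a pair involving an iterate---both in your one-step recursion and in your final conversion step applied to $(\beta^t,\betahat)$---requires not only that the center lie in $\ball_2(3)\cap\ball_1(R)$, but also that the difference satisfy $\|\beta_1-\beta_2\|_2\le 3$. Your justification (``statistical proximity keeps $\betahat\in\ball_2(3)\cap\ball_1(R)$'') addresses only the first requirement. Nothing in your proposal guarantees that the iterates $\beta^t$ remain within $\ell_2$-distance $3$ of $\betahat$; a priori they may wander, in which case only the linear lower bound \eqref{EqnLocalRSCSecond} is available, and a bound that is linear in $\|\beta^t - \betahat\|_2$ cannot drive the geometric contraction of the function gap. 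The paper closes this hole with a separate induction, Lemma~\ref{LemL2Assump}: assuming $\|\beta^0-\betahat\|_2\le 3$ (guaranteed by initializing with $\|\beta^0\|_2\le 3/2$, together with $\|\betastar\|_2\le 1$ and $\|\betahat-\betastar\|_2\le 1/2$), one shows $\|\beta^{t+1}-\betahat\|_2\le 3$ by contradiction, combining the linear regime \eqref{EqnLocalRSCSecond}, the RSM condition \eqref{EqnRSM}, and the optimality of the update \eqref{EqnCompGradUpdate} with the induction hypothesis. This containment lemma is not optional bookkeeping: without it the recursion at the heart of your argument is unjustified at every step, and the theorem's implicit dependence on the initialization never surfaces in your proof.
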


\textbf{Remark:}  
Note that for the optimal choice of tolerance parameter $\delta \asymp
\frac{k \log p}{n} \epsstat$, the error bound appearing in~\eqref{EqnEllTwoBound} takes the form $\frac{c
  \epsstat^2}{2\alpha - \mu} \cdot \frac{k \log p}{n}$, meaning that
successive iterates of the composite gradient descent algorithm are
guaranteed to converge to a region within statistical accuracy of the
true global optimum $\betahat$. Concretely, if the sample size
satisfies $n \succsim C k \log p$ and the regularization parameters
are chosen appropriately, Theorem~\ref{TheoEll12Meta} guarantees that
$\epsstat = \order\left(\sqrt{\frac{k \log p}{n}}\right)$ with high
probability.  Combined with Theorem~\ref{TheoFastGlobal}, we then
conclude that
\begin{align*}
\max \Big \{ \|\beta^t - \betahat\|_2, \; \|\beta^t - \betastar\|_2
\Big \} & = \order\left(\sqrt{\frac{k \log p}{n}}\right),
\end{align*}
for all iterations $t \geq T(\epsstat)$.

As would be expected, the (restricted) curvature $\alpha$ of the loss
function and nonconvexity parameter $\mu$ of the penalty function
enter into the bound via the denominator $2\alpha - \mu$. Indeed, the
bound is tighter when the loss function possesses more curvature or
the penalty function is closer to being convex, agreeing with
intuition. Similar to our discussion in the remark following
Theorem~\ref{TheoPredMeta}, the requirement $\mu < 2\alpha$ is
certainly necessary for our proof technique, but it is possible that
composite gradient descent still produces good results when this
condition is violated. See Section~\ref{SecSims} for simulations in
scenarios involving mild and severe violations of this condition.

Finally, note that the parameter $\eta$ must be sufficiently large (or
equivalently, the stepsize must be sufficiently small) in order for
the composite gradient descent algorithm to be
well-behaved. See~\cite{Nes07} for a discussion of how the stepsize
may be chosen via an iterative search when the problem parameters are
unknown.

In the case of corrected linear regression
(Corollary~\ref{CorLinear}), Lemma 13 of~\cite{LohWai11a} establishes
the RSC/RSM conditions for various statistical models. The following
proposition shows that the conditions~\eqref{EqnRSC2}
and~\eqref{EqnRSM} hold in GLMs when the $x_i$'s are drawn
i.i.d.\ from a zero-mean sub-Gaussian distribution with parameter
$\sigma_x^2$ and covariance matrix $\CovX = \cov(x_i)$. As usual, we
assume a sample size $\numobs \ge c \, \kdim \log \pdim$, for a
sufficiently large constant $c > 0$. Recall the definition of the
Taylor error $\scriptT(\beta_1, \beta_2)$ from~\eqref{EqnTaylorDefn}.

\begin{myproposition}
	\label{PropGLM} [RSC/RSM conditions for generalized linear models]
There exists a constant $\alpha_\ell > 0$, depending only on the GLM
and the parameters $(\sigma_x^2, \CovX)$, such that for all vectors $\beta_2 \in
\ball_2(3) \cap \ball_1(R)$, we have
\begin{subnumcases}{
	\label{EqnRSCglm}
	\scriptT(\beta_1, \beta_2) \geq} %
	\label{EqnGLMFirst}
	 \frac{\alpha_\ell}{2} \|\Delta\|_2^2 - \frac{c^2
           \sigma_x^2}{2\alpha_\ell} \frac{\log p}{n} \|\Delta\|_1^2,
         & \text{for all $\|\beta_1 - \beta_2\|_2 \le 3$,} \\ %
	\label{EqnGLMSecond}
	\frac{3\alpha_\ell}{2} \|\Delta\|_2 - 3 c \sigma_x
        \sqrt{\frac{\log p}{n}} \|\Delta\|_1, & \text{for all $\:
          \|\beta_1 - \beta_2\|_2 \ge 3$},
\end{subnumcases}
with probability at least $1 - c_1 \exp (-c_2 n)$. With the bound
$\|\psi''\|_\infty \leq \alpha_u$, we also have
\begin{align}
\label{EqnRSMglm}
\scriptT(\beta_1, \beta_2) & \leq \alpha_u \lambda_{max}(\CovX) \;
\left( \frac{3}{2} \|\Delta\|_2^2 + \frac{\log p}{n}
\|\Delta\|_1^2\right), \qquad \mbox{for all $\beta_1, \beta_2 \in
  \real^\pdim$},
\end{align}
with probability at least $1 - c_1 \exp(-c_2 \numobs)$.
\end{myproposition}

For the proof of Proposition~\ref{PropGLM}, see Appendix~\ref{AppGLM}.


\subsection{Form of Updates}
\label{SecUpdates}

In this section, we discuss how the updates~\eqref{EqnCompGradUpdate}
are readily computable in many cases.  We begin with the case $\Omega
= \real^\pdim$, so we have no additional constraints apart from
$\SIDESPEC(\beta) \leq R$.  In this case, given iterate $\betait{t}$,
the next iterate $\betait{t+1}$ may be obtained via the following
three-step procedure:

\begin{enumerate}
\item[(1)] First optimize the unconstrained program
  \begin{equation}
    \label{EqnCompUnconstr}
    \betahat \in \arg \min_{\beta \in \real^\pdim} \left\{ \frac{1}{2}
    \left\|\beta - \left(\beta^t - \frac{\nabla
      \Lossbar_n(\beta^t)}{\eta}\right)\right\|_2^2 +
    \frac{\lambda}{\eta} \cdot \SIDESPEC(\beta)\right\}.
  \end{equation}
\item[(2)] If $\SIDESPEC(\betahat) \leq R$, define $\beta^{t+1} =
  \betahat$.
\item[(3)] Otherwise, if $\SIDESPEC(\betahat) > R$, optimize the
  constrained program
\begin{align}
\label{EqnRhobarProj}
\beta^{t+1} & \in \arg \min_{\SIDESPEC(\beta) \leq R} \left \{
\frac{1}{2} \left\| \beta - \left(\beta^t - \frac{\nabla
  \Lossbar_n(\beta^t)} {\eta} \right ) \right\|_2^2 \right\}.
\end{align}
\end{enumerate}

We derive the correctness of this procedure in
Appendix~\ref{AppCompIterate}. For many nonconvex regularizers
$\myrho$ of interest, the unconstrained
program~\eqref{EqnCompUnconstr} has a convenient closed-form solution:
For the SCAD penalty~\eqref{EqnSCADdefn}, the
program~\eqref{EqnCompUnconstr} has simple closed-form solution given
by
\begin{equation}
\label{EqnXSCAD}
\betahat_{\mbox{\tiny{SCAD}}} =
\begin{cases}
0 & \mbox{if } 0 \le |z| \le \nu\lambda, \\ z - \sign(z) \cdot
\nu\lambda & \mbox{if } \nu\lambda \le |z| \le (\nu + 1) \lambda, \\
\frac{z - \sign(z) \cdot \frac{a\nu\lambda}{a-1}}{1 - \frac{\nu}{a-1}}
& \mbox{if } (\nu + 1) \lambda \le |z| \le a\lambda, \\ z & \mbox{if }
|z| \ge a\lambda.
\end{cases}
\end{equation}
For the MCP~\eqref{EqnMCPdefn}, the optimum of the
program~\eqref{EqnCompUnconstr} takes the form
\begin{equation}
\label{EqnXMCP}
\betahat_{\mbox{\tiny{MCP}}} = \begin{cases} 0 & \mbox{if $0 \le |z|
    \le \nu\lambda$,} \\
\frac{z - \sign(z) \cdot \nu\lambda}{1 - \nu/b} & \mbox{if $\nu\lambda
  \le |z| \le b\lambda$,} \\
z & \mbox{if $|z| \ge b\lambda$.}
\end{cases}
\end{equation}		
In both~\eqref{EqnXSCAD} and~\eqref{EqnXMCP}, we have
\begin{equation*}
	 z \defn \frac{1}{1 + \mu/\eta} \left(\beta^t - \frac{\nabla
           \Lossbar_n(\beta^t)}{\eta}\right), \qquad \text{and} \qquad
         \nu \defn \frac{1/\eta} {1 + \mu/\eta},
\end{equation*}
and the operations are taken componentwise. See
Appendix~\ref{AppUpdates} for the derivation of these closed-form
updates.

More generally, when $\Omega \subsetneq \real^\pdim$ (such as in the
case of the graphical Lasso), the minimum in the
program~\eqref{EqnCompGradUpdate} must be taken over $\Omega$, as
well. Although the updates are not as simply stated, they still
involve solving a convex optimization problem. Despite this more
complicated form, however, our results from
Section~\ref{SecFastGlobal} on fast global convergence under
restricted strong convexity and restricted smoothness assumptions
carry over without modification, since they only require RSC/RSM
conditions holding over a sufficiently small radius together with
feasibility of $\betastar$.


\subsection{Proof of Theorem~\ref{TheoFastGlobal}}
\label{SecProofFG}
	
We provide the outline of the proof here, with more technical results
deferred to Appendix~\ref{AppFastGlobal}.  In broad terms, our proof
is inspired by a result of~\cite{AgaEtal12}, but requires various
modifications in order to be applied to the much larger family of
nonconvex regularizers considered here.

Our first lemma shows that the optimization error $\beta^t - \betahat$
lies in an approximate cone set:

\begin{mylemma}
\label{LemICB}
Under the conditions of Theorem~\ref{TheoFastGlobal}, suppose there
exists a pair $(\etabar, T)$ such that
\begin{equation}
\label{EqnObjTol}
\phi(\beta^t) - \phi(\betahat) \le \etabar, \qquad \forall t \ge T.
\end{equation}
Then for any iteration $t \ge T$, we have
\begin{equation*}
\|\beta^t - \betahat\|_1 \le 8\sqrt{k} \|\beta^t - \betahat\|_2 +
16\sqrt{k} \|\betahat - \betastar\|_2 + 2 \cdot
\min\left(\frac{2\etabar}{\lambda L}, R\right).
\end{equation*}
\end{mylemma}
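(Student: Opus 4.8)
The plan is to route the argument through the sparse target $\betastar$ rather than working directly with $\betahat$. Writing $\nu^t \defn \beta^t - \betahat$, $\nuhat \defn \betahat - \betastar$, and $\delta^t \defn \beta^t - \betastar = \nu^t + \nuhat$, I would first prove an approximate cone bound for $\delta^t$ and only afterward convert it into a statement about $\nu^t$; the presence of both $\|\nu^t\|_2$ and $\|\nuhat\|_2$ in the claim is the signature of this detour. Let $A$ index the $k$ largest-magnitude coordinates of $\delta^t$. Since $\SIDESPEC(\beta) \ge L\|\beta\|_1$ (a consequence of Assumption~\ref{AsRho}: $\doubrho$ is convex with one-sided derivative $\lambda L$ at the origin, so $\doubrho(t) \ge \lambda L|t|$), feasibility of both $\beta^t$ and $\betastar$ yields the crude bound $\|\delta^t\|_1 \le 2R$, which I will use repeatedly.

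The first substantive step is a reduction of the hypothesis. Because $\betahat$ is a global minimizer and $\betastar$ is feasible, $\phi(\betahat) \le \phi(\betastar)$, so~\eqref{EqnObjTol} upgrades to $\phi(\beta^t) - \phi(\betastar) \le \etabar$ for all $t \ge T$. Expanding $\phi = \EmpLoss + \myrho$ and applying the RSC lower bound~\eqref{EqnLocalRSCFirst} at the feasible anchor $\betastar \in \ball_2(3) \cap \ball_1(R)$ (the iterate remains within the admissible radius by Lemma~\ref{LemL2Assump}, and the complementary range $\|\delta^t\|_2 \ge 3$ is treated identically via~\eqref{EqnLocalRSCSecond}), I would discard the nonnegative quadratic term, bound the gradient contribution by $\|\nabla \EmpLoss(\betastar)\|_\infty \|\delta^t\|_1 \le \frac{\lambda L}{8}\|\delta^t\|_1$ using the lower bound on $\lambda$, and absorb the residual $\tau_1 \frac{\log p}{n}\|\delta^t\|_1^2 \le 2R\tau_1\frac{\log p}{n}\|\delta^t\|_1$ into another $\frac{\lambda L}{8}\|\delta^t\|_1$ via the sample-size scaling together with $\lambda \le c''\alpha/(RL)$. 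The outcome is the purely regularizer-level inequality $\myrho(\beta^t) - \myrho(\betastar) \le \frac{\lambda L}{4}\|\delta^t\|_1 + \etabar$.

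Next I would extract the cone. The matching lower bound $\myrho(\beta^t) - \myrho(\betastar) \ge \lambda L(\|\delta^t_{A^c}\|_1 - \|\delta^t_A\|_1)$ is exactly what the regularizer decomposition of Lemma~\ref{LemEll1Reg} supplies (after first checking the sign condition on the $\myrho$-values that makes the lemma applicable, precisely as in the proof of Theorem~\ref{TheoEll12Meta}); this is the device that absorbs the flattening of the nonconvex $\myrho$ on large coordinates, where no naive bound of the form $\myrho(v) \gtrsim \|v\|_1$ is available. Combining the two inequalities and cancelling $\lambda L$ leaves a cone bound $\|\delta^t_{A^c}\|_1 \le c_1 \|\delta^t_A\|_1 + c_2 \etabar/(\lambda L)$; since $\|\delta^t_A\|_1 \le \sqrt k\|\delta^t\|_2$, this gives $\|\delta^t\|_1 \le (1+c_1)\sqrt k\|\delta^t\|_2 + c_2\etabar/(\lambda L)$, with the constants tracking to $1 + c_1 = 8$. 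The $\min(\cdot, R)$ in the statement arises because the additive slack mass may instead be controlled by the feasibility estimate $\|\delta^t\|_1 \le 2R$, so it is at most $2\min(2\etabar/(\lambda L), R)$.

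Finally I would transfer the bound to $\nu^t$. Running the same derivation with $\etabar = 0$ and $\betahat$ in place of $\beta^t$ — legitimate since $\phi(\betahat) \le \phi(\betastar)$ — yields the clean cone $\|\nuhat\|_1 \le 8\sqrt k\|\nuhat\|_2$, so $\betahat$ requires no separate statistical input. The triangle inequalities $\|\nu^t\|_1 \le \|\delta^t\|_1 + \|\nuhat\|_1$ and $\|\delta^t\|_2 \le \|\nu^t\|_2 + \|\nuhat\|_2$ then assemble into $\|\nu^t\|_1 \le 8\sqrt k\|\nu^t\|_2 + 16\sqrt k\|\nuhat\|_2 + 2\min(2\etabar/(\lambda L), R)$, as claimed. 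I expect the cone-extraction step to be the main obstacle: obtaining a usable lower bound on $\myrho(\beta^t) - \myrho(\betastar)$ despite the regularizer's nonconvexity is the crux, and it is also the point at which the objective-gap slack $\etabar$ must be correctly converted into $\ell_1$ units by dividing by $\lambda L$. Confirming that the iterates stay within the RSC radius so that~\eqref{EqnLocalRSCFirst} applies with anchor $\betastar$ is the remaining technical bookkeeping.
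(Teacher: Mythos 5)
Your proposal is correct and follows essentially the same route as the paper's proof: both upgrade the hypothesis to $\phi(\beta^t) \le \phi(\betastar) + \etabar$ using global optimality of $\betahat$, extract an approximate cone bound for $\beta^t - \betastar$ (and, with $\etabar = 0$, for $\betahat - \betastar$) from the RSC conditions anchored at $\betastar$ together with Lemma~\ref{LemEll1Reg}, and assemble the claim via the triangle inequality, with identical constants $8$, $16$, and $2\min\left(\frac{2\etabar}{\lambda L}, R\right)$. The one loose point is the sign condition for Lemma~\ref{LemEll1Reg} in the regime $\|\beta^t - \betastar\|_2 \le 3$: what can actually be certified there (by retaining the quadratic RSC term and splitting off the trivial case $\etabar \ge \frac{\lambda L}{4}\|\beta^t - \betastar\|_1$, exactly as in Theorem~\ref{TheoEll12Meta}) is $3\myrho(\betastar) - \myrho(\beta^t) \ge 0$ rather than the $\xi = 1$ version you quote, so the lemma must be invoked with $\xi = 3$; this changes your intermediate cone constant but leaves the final bound unaffected.
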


Our second lemma shows that as long as the composite gradient descent
algorithm is initialized with a solution $\beta^0$ within a constant
radius of a global optimum $\betahat$, all successive iterates also
lie within the same ball:

\begin{mylemma}
\label{LemL2Assump}
Under the conditions of Theorem~\ref{TheoFastGlobal}, and with an
initial vector $\beta^0$ such that $\|\beta^0 - \betahat\|_2 \le 3$,
we have
\begin{equation}
\label{EqnL2Assump}
\|\beta^t - \betahat\|_2 \le 3, \qquad \mbox{for all $t \ge 0$.}
\end{equation}
\end{mylemma}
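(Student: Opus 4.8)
The plan is to argue by induction on $t$, exploiting the fact that a single composite gradient step \eqref{EqnCompGradUpdate} makes (almost) monotone progress in the composite objective $\phi(\beta) = \EmpLoss(\beta) + \myrho(\beta)$, together with the observation that the restricted strong convexity condition in the outer regime \eqref{EqnLocalRSCSecond} grows \emph{linearly} in $\|\betait{t} - \betahat\|_2$ once this quantity exceeds $3$, so that an iterate leaving $\ball_2(3)$ would be forced to increase the objective — a contradiction. A preliminary step is to locate $\betahat$ inside the region where the RSC inequalities are active: since $\betahat$ is in particular a stationary point and the hypotheses of Theorem~\ref{TheoFastGlobal} imply those underlying the argument of Theorem~\ref{TheoEll12Meta} (instantiating \eqref{EqnRSC2} at the feasible point $\beta_2 = \betastar$, where $\|\betastar\|_2 \le 1$), the scaling $\numobs > C\kdim\log\pdim$ forces $\|\betahat - \betastar\|_2$ to be small. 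Combined with $\|\betastar\|_2 \le 1$ this gives $\|\betahat\|_2 < 3$, so $\betahat \in \ball_2(3)\cap\ball_1(R)$ and the conditions \eqref{EqnRSC2} may be instantiated at $\beta_2 = \betahat$.

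With the base case $t=0$ given by hypothesis, I would suppose $\|\betait{s} - \betahat\|_2 \le 3$ for all $s \le t$ and establish the bound at $t+1$. First I would derive a per-step progress inequality by combining the optimality of $\betait{t+1}$ in the proximal subproblem \eqref{EqnCompGradUpdate} (tested against $\beta = \betait{t}$) with the restricted smoothness bound \eqref{EqnRSM} applied to the modified loss $\EmpLossBar$; recalling from \eqref{EqnDefnSIDESPEC} that $\phi = \EmpLossBar + \lambda\SIDESPEC$, and using the stepsize choice $\eta \ge 2\alpha_3 - \mu$, this yields $\phi(\betait{t+1}) \le \phi(\betait{t}) + \tau_3\frac{\log\pdim}{\numobs}\|\betait{t+1} - \betait{t}\|_1^2$, i.e.\ descent up to an $\ell_1$ remainder. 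Telescoping and controlling this remainder produces a uniform bound $\phi(\betait{t+1}) - \phi(\betahat) \le \etabar$ on the objective gap. Feeding this tolerance into Lemma~\ref{LemICB} gives the approximate-cone bound $\|\betait{t+1} - \betahat\|_1 \lesssim \sqrt{\kdim}\,\|\betait{t+1} - \betahat\|_2 + \sqrt{\kdim}\,\|\betahat - \betastar\|_2 + \min(2\etabar/(\lambda L), R)$, converting every $\ell_1$ term into an $\ell_2$ term up to lower-order pieces. Finally, were $\|\betait{t+1} - \betahat\|_2 > 3$, the outer RSC \eqref{EqnLocalRSCSecond} at $\beta_2 = \betahat$, the first-order optimality of $\betahat$, and the regularizer bound $\|\nabla\myrho(\betahat)\|_\infty \le \lambda L$ together would force $\phi(\betait{t+1}) - \phi(\betahat) \gtrsim \alpha_2\|\betait{t+1} - \betahat\|_2 > 3\alpha_2$ after absorbing the $\ell_1$ cross-terms via the cone bound and $\numobs > C\kdim\log\pdim$; this contradicts the gap bound $\etabar$, completing the induction.

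The main obstacle is the circularity between the almost-descent inequality and the sparsity control of the iterates: the progress bound carries a remainder $\tau_3\frac{\log\pdim}{\numobs}\|\betait{t+1} - \betait{t}\|_1^2$ that is negligible only once the iterate differences are known to be effectively $\kdim$-sparse, whereas the cone bound of Lemma~\ref{LemICB} itself presupposes an objective tolerance that the descent step is meant to supply. Resolving this requires threading the induction carefully — using the inductive hypothesis $\|\betait{s} - \betahat\|_2 \le 3$ (so that the inner RSC \eqref{EqnLocalRSCFirst} is available at every earlier iterate) to bound the relevant $\ell_1$ norms by $\sqrt{\kdim}$ times $\ell_2$ norms, and then invoking $\numobs > C\kdim\log\pdim$ to make the residual $\tau\frac{\kdim\log\pdim}{\numobs}$ factors strictly smaller than the curvature constants. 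The quantitative closing of the contradiction — ensuring that the linear-in-$\|\cdot\|_2$ lower bound from the outer regime at radius $3$ strictly dominates both the accumulated remainder and the initial gap $\phi(\beta^0) - \phi(\betahat)$ — is where the upper bound $\lambda \le c''\alpha/(RL)$ and the choice of radius $3$ are used; the remaining manipulations are routine given Lemma~\ref{LemICB} and the RSC/RSM inequalities.
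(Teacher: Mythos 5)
Your proposal follows a genuinely different route from the paper --- telescoped objective descent plus Lemma~\ref{LemICB} plus a gap-threshold contradiction --- and it has two gaps that I do not believe can be closed as you sketch. First, the telescoping step fails. Your per-step inequality $\phi(\betait{t+1}) \le \phi(\betait{t}) + \tau_3 \frac{\log p}{n}\|\betait{t+1}-\betait{t}\|_1^2$ is correct, but the only a priori control on the remainder is feasibility, $\|\betait{t+1}-\betait{t}\|_1 \le 2R$, and under the theorem's own scaling ($\lambda \succsim \tau\sqrt{\log p/n}$ and $\lambda R \precsim \alpha$) the quantity $R^2 \frac{\log p}{n}$ is a \emph{constant}, not a vanishing term; hence each step contributes a constant and the telescoped bound grows linearly in $t$, so no uniform tolerance $\etabar$ exists. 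You acknowledge the resulting circularity with Lemma~\ref{LemICB}, but your proposed resolution does not work: the induction hypothesis $\|\betait{s}-\betahat\|_2 \le 3$ is a pure $\ell_2$ statement, and neither it nor the inner RSC inequality~\eqref{EqnLocalRSCFirst} converts $\ell_1$ norms of iterate differences into $\sqrt{k}$ times $\ell_2$ norms; that conversion comes only from the objective-gap hypothesis~\eqref{EqnObjTol} of Lemma~\ref{LemICB}, which is exactly what you are trying to establish. Second, the contradiction step is quantitatively broken. If you lower-bound $\myrho(\betait{t+1})-\myrho(\betahat)$ via weak convexity, the gap lower bound is $\alpha_2\|\Delta\|_2 - \tau_2\sqrt{\log p/n}\,\|\Delta\|_1 - \frac{\mu}{2}\|\Delta\|_2^2$ with $\Delta = \betait{t+1}-\betahat$, and the negative quadratic term means that $\|\Delta\|_2 > 3$ does \emph{not} force a large gap (the bound tends to $-\infty$ as $\|\Delta\|_2$ grows). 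If you instead use the $\lambda L$-Lipschitz property of $\myrho$ to avoid that term, your contradiction requires $\etabar$ to lie below a threshold of order $\alpha_2$, yet $\etabar \ge \phi(\beta^0)-\phi(\betahat)$, which is generically of order $\alpha_3$ (smoothness); nothing in the hypotheses makes $\alpha_3 \precsim \alpha_2$, and the bound $\lambda \le c''\alpha/(RL)$ you invoke does not control the initial gap.

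The paper's proof avoids both problems by never leaving a single iteration. It obtains the same lower bound on $\phi(\betait{t+1})-\phi(\betahat)$ that you describe (outer RSC~\eqref{EqnLocalRSCSecond} at $\betahat$, convexity of $\SIDESPEC$, first-order optimality of $\betahat$), keeping the $-\frac{\mu}{2}\|\betait{t+1}-\betahat\|_2^2$ term, and then upper-bounds the same gap by summing three ingredients: the inner RSC~\eqref{EqnLocalRSCFirst} applied to the pair $(\betahat,\betait{t})$ (legitimate by the induction hypothesis), the RSM bound~\eqref{EqnRSM} applied to $(\betait{t+1},\betait{t})$, and the first-order optimality of $\betait{t+1}$ in the update~\eqref{EqnCompGradUpdate}. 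This last ingredient --- which your proposal discards by retaining only objective values --- is the crucial missing idea: it produces the telescoped-in-one-step quantity $\frac{\eta}{2}\|\betait{t}-\betahat\|_2^2 - \frac{\eta}{2}\|\betait{t+1}-\betahat\|_2^2$, where $\frac{\eta}{2}\|\betait{t}-\betahat\|_2^2 \le \frac{9\eta}{2}$ by induction, and the term $-\frac{\eta}{2}\|\betait{t+1}-\betahat\|_2^2$ with $\eta \ge \mu$ both absorbs the $+\frac{\mu}{2}\|\betait{t+1}-\betahat\|_2^2$ penalty and, since $\|\betait{t+1}-\betahat\|_2^2 > 3\|\betait{t+1}-\betahat\|_2$ on the contradiction event, strengthens the linear term on the left. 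All $\ell_1$ quantities are bounded crudely by $2R$ via feasibility --- harmless there, because they appear once rather than summed over iterations --- and the induction closes under the condition $2R\tau\sqrt{\log p/n} + 8R^2\tau\frac{\log p}{n} \le 3\bigl(\alpha - \frac{3\mu}{2}\bigr)$, with no appeal to Lemma~\ref{LemICB}, to telescoped descent, or to smallness of the initial objective gap.
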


In particular, suppose we initialize the composite gradient procedure
with a vector $\beta^0$ such that \mbox{$\|\beta^0\|_2 \le
  \frac{3}{2}$}. Then by the triangle inequality,
\begin{equation*}
\|\beta^0 - \betahat\|_2 \le \|\beta^0\|_2 + \|\betahat -
\betastar\|_2 + \|\betastar\|_2 \le 3,
\end{equation*}
where we have assumed our scaling of $\numobs$ guarantees $\|\betahat
- \betastar\|_2 \leq 1/2$.

Finally, recalling our earlier definition~\eqref{EqnKappa} of
$\kappa$, the third lemma combines the results of Lemmas~\ref{LemICB}
and~\ref{LemL2Assump} to establish a bound on the value of the
objective function that decays exponentially with $t$:

\begin{mylemma}
\label{LemIterate}
Under the same conditions of Lemma~\ref{LemL2Assump}, suppose in
addition that~\eqref{EqnObjTol} holds and $\frac{32k \tau
  \log p}{n} \le \frac{2\alpha - \mu}{4}$. Then for any $t \ge T$, we
have
\begin{equation*}
\phi(\beta^t) - \phi(\betahat) \le \kappa^{t - T} (\phi(\beta^T) -
\phi(\betahat)) + \frac{\xi}{1-\kappa} (\epsilon^2 + \epsbar^2),
\end{equation*}
where $\epsbar \defn 8\sqrt{k} \epsstat$, $\epsilon \defn 2 \cdot \min
\left(\frac{2\etabar}{\lambda L}, R\right)$, the quantities $\kappa$
and $\varphi$ are defined according to~\eqref{EqnKappa}, and
\begin{align}
 \label{EqnXi}
\xi & \defn \frac{1}{1 - \varphi(n,p,k)} \cdot \frac{\tau \log p}{n}
\cdot \left(\frac{2\alpha - \mu}{4\eta} + 2\varphi(n,p,k) + 5\right).
\end{align}
\end{mylemma}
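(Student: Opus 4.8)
The plan is to establish a one-step recursion of the form $\phi(\beta^{t+1}) - \phi(\betahat) \le \kappa\,\bigl(\phi(\beta^t) - \phi(\betahat)\bigr) + \xi(\epsilon^2 + \epsbar^2)$ for every $t \ge T$, and then unroll it as a geometric series. Throughout I write the objective as $\phi = \EmpLossBar + \lambda\SIDESPEC$, exhibiting it as the sum of the differentiable (nonconvex) function $\EmpLossBar$ and the convex term $\lambda\SIDESPEC$; the modified loss inherits RSC/RSM with curvature shifted by $\mu/2$, since $\scriptTBar = \scriptT - \tfrac{\mu}{2}\|\cdot\|_2^2$ by~\eqref{EqnTCompare}. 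By Lemma~\ref{LemL2Assump} every iterate obeys $\|\beta^t - \betahat\|_2 \le 3$, and $\|\betahat\|_2 \le 3/2$, so both $\beta^t$ and $\betahat$ lie in $\ball_2(3) \cap \ball_1(R)$ and the conditions~\eqref{EqnRSC2} and~\eqref{EqnRSM} apply to the relevant pairs. First I derive a one-step descent bound: restricted smoothness~\eqref{EqnRSM} together with $\eta \ge 2\alpha_3 - \mu$ gives $\phi(\beta^{t+1}) \le Q_t(\beta^{t+1}) + \tau_3\tfrac{\log p}{n}\|\beta^{t+1} - \beta^t\|_1^2$, where $Q_t$ is the convex quadratic model minimized by the update~\eqref{EqnCompGradUpdate}. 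Since the feasible set and $\SIDESPEC$ are convex, I evaluate $Q_t$ at the feasible interpolant $\beta^t + s(\betahat - \beta^t)$, $s \in (0,1]$, bound $\SIDESPEC$ by its chord, and invoke the shifted RSC bound~\eqref{EqnLocalRSCFirst} at $\beta^t$ toward $\betahat$, obtaining
\[
\phi(\beta^{t+1}) - \phi(\betahat) \le (1-s)\bigl(\phi(\beta^t) - \phi(\betahat)\bigr) + \Bigl(\tfrac{\eta s^2}{2} - s(\alpha_1 - \tfrac{\mu}{2})\Bigr)\|\beta^t - \betahat\|_2^2 + \tau \tfrac{\log p}{n}\bigl(\|\beta^t - \betahat\|_1^2 + \|\beta^{t+1} - \beta^t\|_1^2\bigr).
\]
I take $s = \tfrac{2\alpha - \mu}{8\eta}$, which makes the curvature coefficient negative (so it may be discarded or used to absorb later positive contributions) and produces the factor $1 - s = 1 - \tfrac{2\alpha-\mu}{8\eta}$ appearing in the numerator of $\kappa$.

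Next I convert the $\ell_1$ tails into objective gaps. The cone condition of Lemma~\ref{LemICB}, with $\epsbar = 8\sqrt{k}\epsstat$ and $\epsilon = 2\min(\tfrac{2\etabar}{\lambda L}, R)$, gives $\|\beta^t - \betahat\|_1 \le 8\sqrt{k}\|\beta^t - \betahat\|_2 + 2\epsbar + \epsilon$ and likewise for $\beta^{t+1}$; the triangle inequality applied to $\|\beta^{t+1} - \beta^t\|_1$ and squaring (via $(a+b+c)^2 \le 3(a^2+b^2+c^2)$) reduces every $\ell_1^2$ term to a combination of $\|\beta^t - \betahat\|_2^2$, $\|\beta^{t+1} - \betahat\|_2^2$, and $(\epsilon^2 + \epsbar^2)$. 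Then the first-order stationarity of the global optimum $\betahat$ (using convexity of $\lambda\SIDESPEC$ and feasibility of $\betahat$) together with the shifted RSC bound yields the complementary lower bound $(\alpha_1 - \tfrac{\mu}{2})\|\beta - \betahat\|_2^2 \le \bigl(\phi(\beta) - \phi(\betahat)\bigr) + \tau_1\tfrac{\log p}{n}\|\beta - \betahat\|_1^2$ for $\beta \in \{\beta^t, \beta^{t+1}\}$. Feeding the cone bound back in and using the scaling hypothesis $\tfrac{32 k\tau \log p}{n} \le \tfrac{2\alpha - \mu}{4}$ to keep the coefficient of $\|\cdot\|_2^2$ bounded below, I solve for each squared distance in terms of its own objective gap plus an $(\epsilon^2 + \epsbar^2)$ remainder. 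This is precisely where the factor $\HACK$ enters.

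Collecting terms, the conversion of $\|\beta^{t+1} - \betahat\|_2^2$ contributes a multiple $\HACK$ of $\phi(\beta^{t+1}) - \phi(\betahat)$ to the right-hand side, which I move to the left to form $(1 - \HACK)\bigl(\phi(\beta^{t+1}) - \phi(\betahat)\bigr)$; the $\beta^t$ contributions assemble into $(1 - s + \HACK)\bigl(\phi(\beta^t) - \phi(\betahat)\bigr)$, and dividing through by $1 - \HACK$ produces exactly $\kappa = \tfrac{1 - \frac{2\alpha-\mu}{8\eta} + \HACK}{1 - \HACK}$ as in~\eqref{EqnKappa}. The residual $\epsilon, \epsbar$ terms, which carry the common prefactor $\tau\tfrac{\log p}{n}$ and the three fractional/integer multiples arising from the distinct $\ell_1$ sources, collect into the constant $\xi$ of~\eqref{EqnXi}; the scaling hypothesis guarantees $\HACK < 1$ and hence $\kappa \in (0,1)$.

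Finally, with $\phi(\beta^{t+1}) - \phi(\betahat) \le \kappa\bigl(\phi(\beta^t) - \phi(\betahat)\bigr) + \xi(\epsilon^2 + \epsbar^2)$ in hand for all $t \ge T$, I unroll from $T$ to $t$: the homogeneous part contributes $\kappa^{t-T}\bigl(\phi(\beta^T) - \phi(\betahat)\bigr)$ and the inhomogeneous part sums to at most $\tfrac{\xi}{1-\kappa}(\epsilon^2 + \epsbar^2)$ by the geometric series, giving the claim. I expect the main obstacle to be the constant bookkeeping in the second and third steps: tracking the precise multiples so that the $\ell_1$-to-gap conversions land exactly on the stated $\kappa$ and $\xi$ rather than on loosely-constant-times versions, while verifying that the negative curvature coefficient from the choice $s = \tfrac{2\alpha-\mu}{8\eta}$ is large enough in magnitude to absorb the positive $\|\beta^t - \betahat\|_2^2$ produced by the cone conversion of $\|\beta^{t+1} - \beta^t\|_1^2$. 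The applicability of the RSC bounds—hinging on all iterates remaining in $\ball_2(3)$ via Lemma~\ref{LemL2Assump}—and the use of the scaling hypothesis to keep the denominator $1 - \HACK$ positive are the remaining points requiring care.
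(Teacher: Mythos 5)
Your proposal is correct and follows essentially the same route as the paper's proof: the same quadratic-model descent step evaluated at a feasible interpolant between $\beta^t$ and $\betahat$, the same RSM correction relating $\phi(\beta^{t+1})$ to the model value, the same use of Lemma~\ref{LemICB} to convert $\ell_1$-terms and of RSC plus convexity of $\SIDESPEC$ and first-order optimality of $\betahat$ to convert squared distances into objective gaps (the paper packages this as Lemma~\ref{LemSunflowerSeed}), assembled into the identical one-step contraction and unrolled geometrically. The only difference is bookkeeping: you take $s = \tfrac{2\alpha-\mu}{8\eta}$ and cancel the $\tfrac{\eta s^2}{2}$ term against the explicitly retained RSC curvature, whereas the paper takes $\gamma = \tfrac{2\alpha-\mu}{4\eta}$ and recycles its positive quadratic term through \eqref{EqnIterateOne}; both land on the same $\kappa$ and $\xi$ up to the unspecified universal constants in \eqref{EqnKappa}.
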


\vspace{1cm}

The remainder of the proof follows an argument used
in~\cite{AgaEtal12}, so we only provide a high-level sketch. We first
prove the following inequality:
\begin{equation}
\label{EqnYoda}
\phi(\beta^t) - \phi(\betahat) \le \delta^2, \qquad \mbox{for all $t
  \geq T^*(\delta)$,}
\end{equation}
as follows. We divide the iterations $t \ge 0$ into a series of epochs
$[T_\ell, T_{\ell + 1})$ and define tolerances $\etabar_0 > \etabar_1
  > \cdots$ such that
\begin{equation*}
 \phi(\beta^t) - \phi(\betahat) \le \etabar_\ell, \qquad \forall t \ge
 T_\ell.
\end{equation*}
In the first iteration, we apply Lemma~\ref{LemIterate} with
$\etabar_0 = \phi(\beta^0) - \phi(\betahat)$ to obtain
\begin{equation*}
\phi(\beta^t) - \phi(\betahat) \le \kappa^t \left(\phi(\beta^0) -
\phi(\betahat)\right) + \frac{\xi}{1 - \kappa}(4R^2 + \epsbar^2),
\qquad \forall t \ge 0.
\end{equation*}
Let $\etabar_1 \defn \frac{2\xi}{1-\kappa} (4R^2 + \epsbar^2)$, and
note that for $T_1 \defn \Bigg \lceil
\frac{\log(2\etabar_0/\etabar_1)}{\log(1/\kappa)} \Bigg \rceil$, we
have
\begin{equation*}
\phi(\beta^t) - \phi(\betahat) \le \etabar_1 \le \frac{4\xi}{1-\kappa}
\max\{4R^2, \epsbar^2\}, \qquad \mbox{for all $t \ge T_1$.}
\end{equation*}
For $\ell \ge 1$, we now define
\begin{align*}
\etabar_{\ell+1} \defn \frac{2\xi}{1-\kappa} (\epsilon_\ell^2 +
\epsbar^2), \quad \mbox{and} \quad T_{\ell+1} \defn \Bigg \lceil
\frac{\log(2\etabar_\ell/\etabar_{\ell+1})}{\log(1/\kappa)} \Bigg
\rceil + T_\ell,
\end{align*}
where $\epsilon_\ell \defn 2\min\left\{\frac{\etabar_\ell}{\lambda L},
R\right\}$.  From Lemma~\ref{LemIterate}, we have
\begin{align*}
  \phi(\beta^t) - \phi(\betahat) & \leq \kappa^{t - T_\ell}
  \left(\phi(\beta^{T_\ell}) - \phi(\betahat)\right) +
  \frac{\xi}{1-\kappa} (\epsilon_\ell^2 + \epsbar^2), \qquad \mbox{for
    all $t \ge T_\ell$,}
\end{align*}
implying by our choice of $\{(\eta_\ell, T_\ell)\}_{\ell \ge 1}$ that
\begin{equation*}
\phi(\beta^t) - \phi(\betahat) \le \etabar_{\ell+1} \le
\frac{4\xi}{1-\kappa} \max\{\epsilon_\ell^2, \epsbar^2\}, \qquad
\forall t \ge T_{\ell+1}.
\end{equation*}
Finally, we use the recursion
\begin{equation}
\label{EqnRec1}
\etabar_{\ell+1} \le \frac{4\xi}{1-\kappa} \max\{\epsilon_\ell^2,
\epsbar^2\}, \qquad T_\ell \le \ell +
\frac{\log(2^\ell\etabar_0/\etabar_\ell)}{\log(1/\kappa)},
\end{equation}
to establish the recursion
\begin{equation}
\label{EqnRec2}
\etabar_{\ell+1} \le \frac{\etabar_\ell}{4^{2^{\ell-1}}}, \qquad
\frac{\etabar_{\ell+1}}{\lambda L} \le \frac{R}{4^{2^\ell}}.
\end{equation}
Inequality~\eqref{EqnYoda} then follows from computing the number of
epochs and timesteps necessary to obtain $\frac{\lambda R
  L}{4^{2^{\ell-1}}} \le \delta^2$. For the remaining steps used to
obtain~\eqref{EqnRec2} from~\eqref{EqnRec1},
we refer the reader to~\cite{AgaEtal12}.

Finally, by~\eqref{EqnIterateOne} in the proof of
Lemma~\ref{LemIterate} in Appendix~\ref{AppLemIterate} and the
relative scaling of $(n,p,k)$, we have
\begin{align*}
\frac{2\alpha - \mu}{4} \|\beta^t - \betahat\|_2^2 & \le \phi(\beta^t)
- \phi(\betahat) + 2\tau \frac{\log p}{n}
\left(\frac{2\delta^2}{\lambda L} + \epsbar\right)^2 \\
& \le \delta^2 + 2\tau \frac{\log p}{n} \left(\frac{2\delta^2}{\lambda
  L}+ \epsbar\right)^2,
\end{align*}
where we have set $\epsilon = \frac{2\delta^2}{\lambda
  L}$. Rearranging and performing some algebra with our choice of
$\lambda$ gives the $\ell_2$-bound.


\section{Simulations}
\label{SecSims}

In this section, we report the results of simulations we performed to
validate our theoretical results. In particular, we present results
for two versions of the loss function $\EmpLoss$, corresponding to
linear and logistic regression, and three penalty functions, namely
the $\ell_1$-norm (Lasso), the SCAD penalty, and the MCP, as detailed
in Section~\ref{SecNonconvexRegExas}. In all cases, we chose
regularization parameters $R = \frac{1.1}{\lambda} \cdot
\rho_\lambda(\betastar)$, to ensure feasibility of $\betastar$, and
$\lambda = \sqrt{\frac{\log p}{n}}$; in practical applications where
$\betastar$ is unknown, we would need to tune $\lambda$ and $R$ using
a method such as cross-validation. \\

\textbf{Linear regression:} In the case of linear 
regression, we simulated covariates corrupted by additive noise
according to the mechanism described in Section~\ref{SecCorrLinear},
giving the estimator
\begin{equation}
\label{EqnCorrLinearEst}
\betahat \in \arg\min_{g_{\lambda, \mu}(\beta) \le R}
\left\{\frac{1}{2} \beta^T \left(\frac{Z^T Z}{n} - \Sigma_w\right)
\beta - \frac{y^T Z}{n} \beta + \rho_\lambda(\beta)\right\}.
\end{equation}
We generated i.i.d.\ samples $x_i \sim N(0, I)$ and set $\Sigma_w =
(0.2)^2 I$, and generated additive noise $\epsilon_i \sim N(0,
(0.1)^2)$. \\

\textbf{Logistic regression:}  In the case of logistic 
regression, we also generated i.i.d.\ samples $x_i \sim N(0,
I)$. Since $\psi(t) = \log(1+\exp(t))$, the program~\eqref{EqnGLMEst}
becomes
\begin{equation}
\label{EqnLogisticEst}
\betahat \in \arg \min_{g_{\lambda, \mu} (\beta) \le R }
\left\{\frac{1}{\numobs} \sum_{i=1}^\numobs \left \{\log(1 +
\exp(\inprod{\beta}{x_i}) - y_i \inprod{\beta}{x_i} \right \} + \myrho
(\beta) \right\}.
\end{equation}  \\

We optimized the programs~\eqref{EqnCorrLinearEst}
and~\eqref{EqnLogisticEst} using the composite gradient
updates~\eqref{EqnCompGradUpdate}.  In order to compute the updates,
we used the three-step procedure described in
Section~\ref{SecUpdates}, together with the updates for SCAD and MCP
given by~\eqref{EqnXSCAD} and~\eqref{EqnXMCP}. Note that the
updates for the Lasso penalty may be generated more simply and
efficiently as discussed in~\cite{AgaEtal12}.

Figure~\ref{FigScaling} shows the results of corrected linear
regression with Lasso, SCAD, and MCP regularizers for three different
problem sizes $\pdim$. In each case, $\betastar$ is a $k$-sparse
vector with \mbox{$k = \lfloor \sqrt{\pdim} \rfloor$,} where the
nonzero entries were generated from a normal distribution and the
vector was then rescaled so that $\|\betastar\|_2 = 1$. As predicted by
Theorem~\ref{TheoEll12Meta}, the three curves corresponding to the
same penalty function stack up when the estimation error
$\|\betahat - \betastar\|_2$ is plotted against the rescaled sample
size $\frac{n}{k \log p}$, and the $\ell_2$-error decreases to zero as
the number of samples increases, showing that the
estimators~\eqref{EqnCorrLinearEst} and~\eqref{EqnLogisticEst} are
statistically consistent. The Lasso, SCAD, and MCP regularizers are
depicted by solid, dotted, and dashed lines, respectively. We chose
the parameter $a = 3.7$ for the SCAD penalty, suggested
by~\cite{FanLi01} to be ``optimal" based on cross-validated empirical
studies, and chose $b = 3.5$ for the MCP. Each point represents an
average over 20 trials.

\begin{figure}[h!]
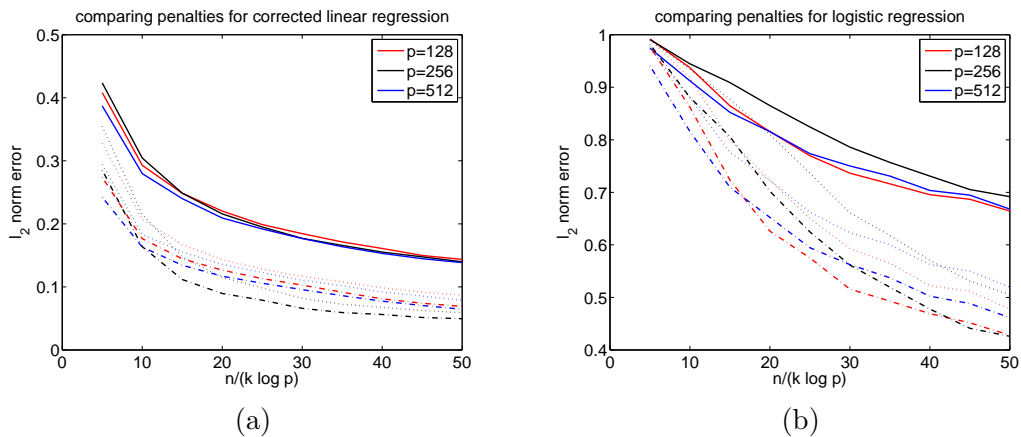

\begin{center}
 \begin{tabular}{cc}
   \widgraph{.45\textwidth}{lin_scaling_2} &
   \widgraph{.45\textwidth}{logistic_scaling_2} \\
   (a) & (b) \\
 \end{tabular}
\end{center}
\caption{Plots showing statistical consistency of linear and logistic
  regression with Lasso, SCAD, and MCP regularizers, and with sparsity
  level \mbox{$k = \lfloor \sqrt{p} \rfloor$.} Panel (a) shows results
  for corrected linear regression, where covariates are subject to
  additive noise with $SNR = 5$. Panel (b) shows similar results for
  logistic regression. Each point represents an average over 20
  trials. In both cases, the estimation error $\|\betahat -
  \betastar\|_2$ is plotted against the rescaled sample size
  $\frac{n}{k \log p}$. Lasso, SCAD, and MCP results are represented
  by solid, dotted, and dashed lines, respectively. As predicted by
  Theorem~\ref{TheoEll12Meta} and Corollaries~\ref{CorLinear}
  and~\ref{CorGLM}, the curves for each of the three types stack up
  for different problem sizes $p$, and the error decreases to zero as
  the number of samples increases, showing that our methods are
  statistically consistent.}
	\label{FigScaling}
\end{figure}

The simulations in Figure~\ref{FigLinLogerrs} depict the
optimization-theoretic conclusions of
Theorem~\ref{TheoFastGlobal}. Each panel shows two different families
of curves, depicting the statistical error $\log(\|\betahat -
\betastar\|_2)$ in red and the optimization error $\log(\|\beta^t -
\betahat\|_2)$ in blue. Here, the vertical axis measures the
$\ell_2$-error on a logarithmic scale, while the horizontal axis
tracks the iteration number.  Within each panel, the blue curves were
obtained by running the composite gradient descent algorithm from $10$
different initial starting points chosen at random, and the
optimization error is measured with respect to a stationary point
obtained from an earlier run of the composite gradient descent
algorithm in place of $\betahat$, since a global optimum is
unknown. The statistical error is similarly displayed as the distance
between $\betastar$ and the stationary points computed from successive
runs of composite gradient descent. In all cases, we used the
parameter settings $\pdim = 128$, $k = \lfloor \sqrt{\pdim} \rfloor$,
and $\numobs = \lfloor 20 k \log p \rfloor$. As predicted by our
theory, the optimization error decreases at a linear rate (on the log
scale) until it falls to the level of statistical error. Furthermore,
it is interesting to compare the plots in panels (c) and (d), which
provide simulation results for two different values of the SCAD
parameter $a$. We see that the choice $a = 3.7$ leads to a tighter
cluster of optimization trajectories, providing further evidence that
this setting suggested by~\cite{FanLi01} is in some sense optimal.
\begin{figure}[h!]
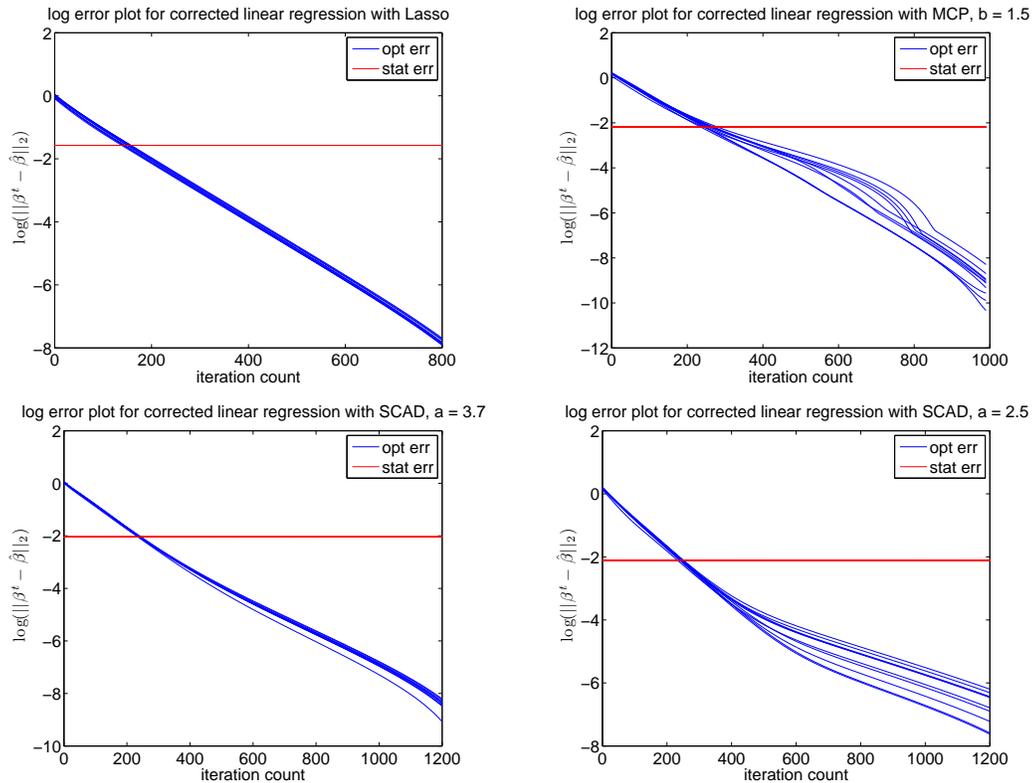

  \begin{center}
    \begin{tabular}{cc}
      \widgraph{.45\textwidth}{logerrs_lin_lasso_3} &
      \widgraph{.45\textwidth}{logerrs_lin_mcp_3} \\ 
      \widgraph{.45\textwidth}{logerrs_lin_scad_3_7_3} &
      \widgraph{.45\textwidth}{logerrs_lin_scad_2_5_3} \\ 
    \end{tabular}
  \end{center}
  \caption{Plots illustrating linear rates of convergence on a log
    scale for corrected linear regression with Lasso, MCP, and SCAD
    regularizers, with $\pdim = 128$, $k = \lfloor \sqrt{\pdim}
    \rfloor$, and $n = \lfloor 20 k \log p \rfloor$, where covariates
    are corrupted by additive noise with $SNR = 5$. Red lines depict
    statistical error $\log\big(\|\betahat - \betastar\|_2 \big)$ and
    blue lines depict optimization error $\log \big(\|\beta^t -
    \betahat\|_2 \big)$. As predicted by Theorem~\ref{TheoFastGlobal},
    the optimization error decreases linearly when plotted against the
    iteration number on a log scale, up to statistical accuracy. Each
    plot shows the solution trajectory for 10 different
    initializations of the composite gradient descent
    algorithm. Panels (a) and (b) show the results for Lasso and MCP
    regularizers, respectively; panels (c) and (d) show results for
    the SCAD penalty with two different parameter values. Note that
    the empirically optimal choice $a = 3.7$ proposed
    by~\cite{FanLi01} generates solution paths that exhibit a smaller
    spread than the solution paths generated for a smaller setting of
    the parameter $a$.}
\label{FigLinLogerrs}
\end{figure}

Figure~\ref{FigLogisticLogerrs} provides analogous results to
Figure~\ref{FigLinLogerrs} in the case of logistic regression, using
$p = 64, k = \lfloor \sqrt{p} \rfloor$, and $n = \lfloor 20 k \log p
\rfloor$. The plot shows solution trajectories for 20 different
initializations of composite gradient descent. Again, we see that the
log optimization error decreases at a linear rate up to the level of
statistical error, as predicted by
Theorem~\ref{TheoFastGlobal}. Furthermore, the Lasso penalty yields a
unique global optimum $\betahat$, since the
program~\eqref{EqnLogisticEst} is convex, as we observe in panel
(a). In contrast, the nonconvex program based on the SCAD penalty
produces multiple local optima, whereas the MCP yields a relatively
large number of local optima. Note that empirically, all local optima
appear to lie within the small ball around $\betastar$ defined in
Theorem~\ref{TheoEll12Meta}. However, if we use
$\lambda_{\min}(\nabla^2 \Loss_n(\betastar))$ as a surrogate for
$\alpha_1$, we see that $2\alpha_1 < \mu$ in the case of the SCAD or
MCP regularizers, which is not covered by our theory.

\begin{figure}[h!]
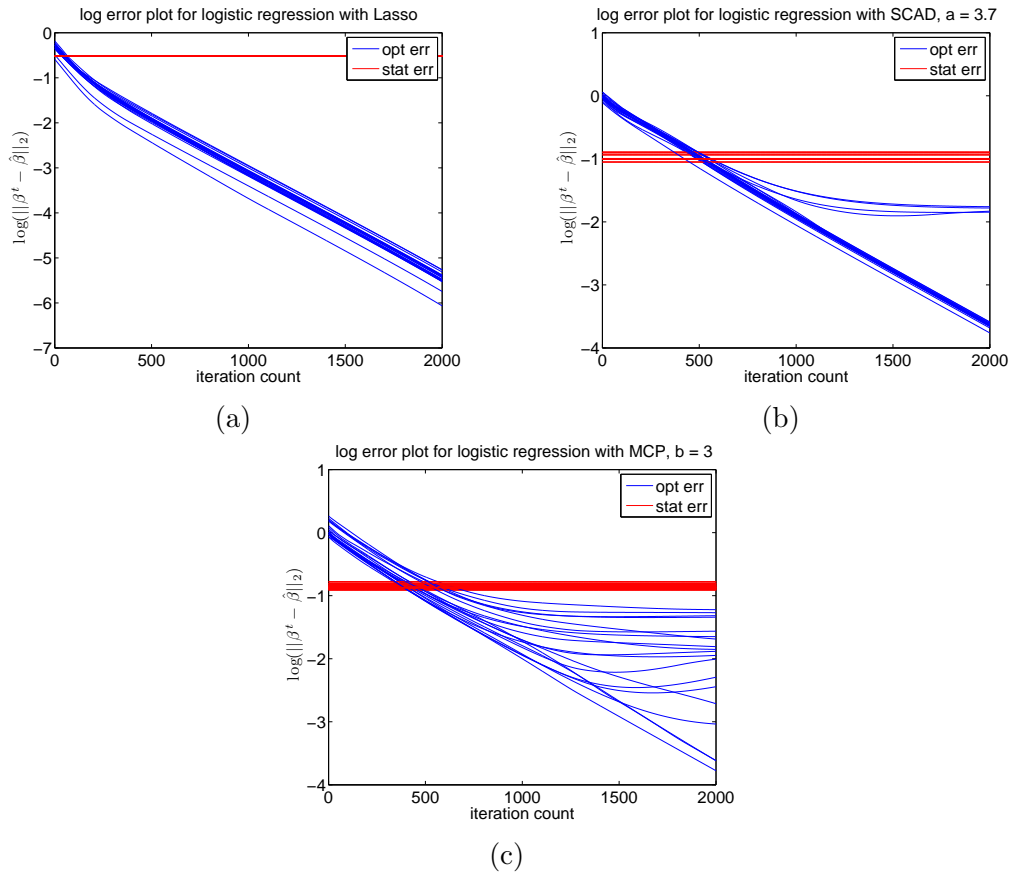

  \begin{center}
    \begin{tabular}{cc}
      \widgraph{.45\textwidth}{logerrs_logistic_lasso_3} &
      \widgraph{.45\textwidth}{logerrs_logistic_scad_3} \\
      (a) & (b) \\
    \end{tabular}
    \begin{tabular}{c}
      \widgraph{.45\textwidth}{logerrs_logistic_mcp_3} \\ (c)
    \end{tabular}
  \end{center}
  \caption{Plots that demonstrate linear rates of convergence on a log
    scale for logistic regression with $p = 64, k = \sqrt{p}$, and $n
    = \lfloor 20 k \log p \rfloor$. Red lines depict statistical error
    $\log\big(\|\betahat - \betastar\|_2\big)$ and blue lines depict
    optimization error $\log\big(\|\beta^t - \betahat\|_2\big)$. (a)
    Lasso penalty.  (b) SCAD penalty.  (c) MCP.  As predicted by
    Theorem~\ref{TheoFastGlobal}, the optimization error decreases
    linearly when plotted against the iteration number on a log scale,
    up to statistical accuracy. Each plot shows the solution
    trajectory for 20 different initializations of the composite
    gradient descent algorithm. Multiple local optima emerge in panels
    (b) and (c), due to nonconvex regularizers.}
\label{FigLogisticLogerrs}
\end{figure}
			
Finally, Figure~\ref{FigBreakdown} explores the behavior of our
algorithm when the condition $\mu < 2\alpha_1$ from
Theorem~\ref{TheoEll12Meta} is significantly violated. We generated
i.i.d.\ samples $x_i \sim N(0, \Sigma)$, with $\Sigma$ taken to be a
Toeplitz matrix with entries $\Sigma_{ij} = \zeta^{|i-j|}$, for some
parameter $\zeta \in [0,1)$, so that $\lambda_{\min}(\Sigma) \geq (1 -
  \zeta)^2$. We chose $\zeta \in \{0.5, 0.9\}$, resulting in $\alpha_1
  \approx \{ 0.25, 0.01 \}$. The problem parameters were chosen to be
  $p = 512, k = \lfloor \sqrt{p} \rfloor$, and $n = \lfloor 10 k \log
  p\rfloor$. Panel (a) shows the expected good behavior of
  $\ell_1$-regularization, even for $\alpha_1 = 0.01$; although
  convergence is slow and the overall statistical error is greater
  than for $\Sigma = I$ (cf.\ Figure~\ref{FigLinLogerrs}(a)),
  composite gradient descent still converges at a linear rate. Panel
  (b) shows that for SCAD parameter $a = 2.5$ (corresponding to $\mu
  \approx 0.67$), local optima still seem to be well-behaved even for
  $2\alpha_1 = 0.5 < \mu$. However, for much smaller values of
  $\alpha_1$, the good behavior breaks down, as seen in panels (c) and
  (d). Note that in the latter two panels, the composite gradient
  descent algorithm does not appear to be converging, even as the
  iteration number increases. Comparing (c) and (d) also illustrates
  the interplay between the curvature parameter $\alpha_1$ of
  $\Loss_n$ and the nonconvexity parameter $\mu$ of
  $\rho_\lambda$. Indeed, the plot in panel (d) is slightly ``better''
  than the plot in panel (c), in the sense that initial iterates at
  least demonstrate some pattern of convergence. This could be
  attributed to the fact that the SCAD parameter is larger,
  corresponding to a smaller value of $\mu$.

\begin{figure}[h!]
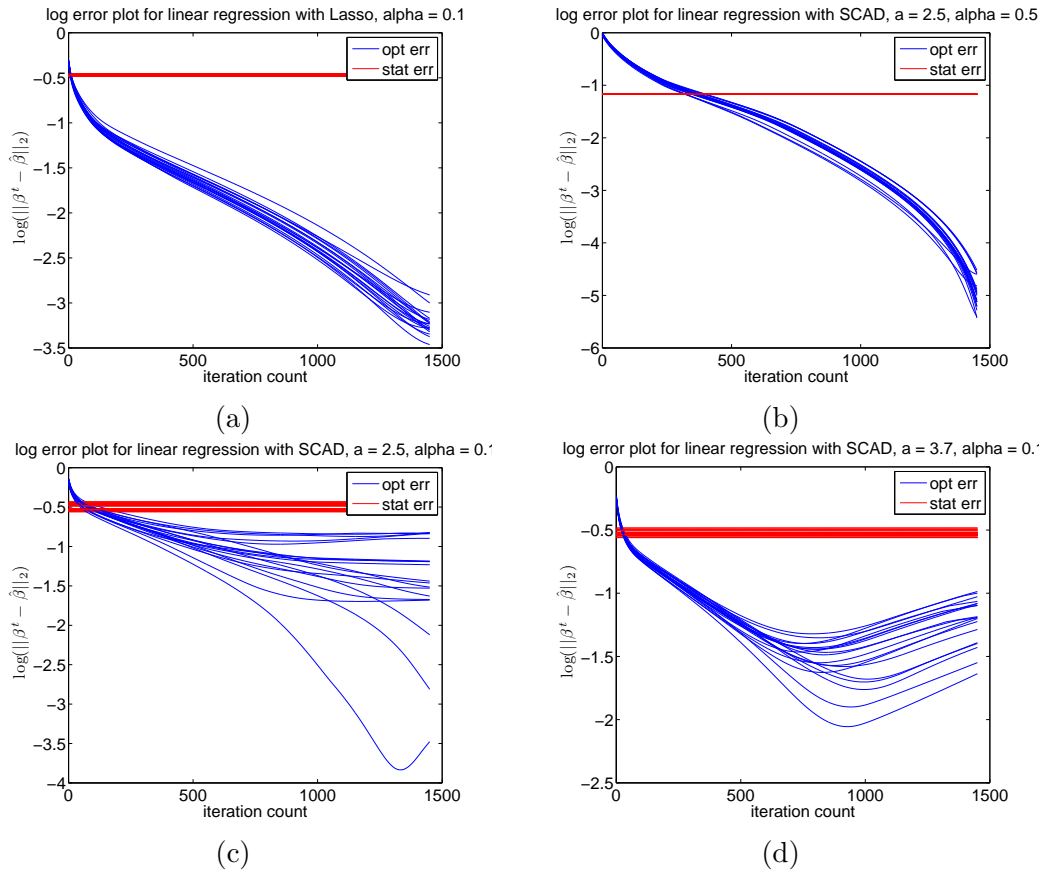

  \begin{center}
    \begin{tabular}{cc}
      \widgraph{.45\textwidth}{toeplitz_lasso} &
      \widgraph{.45\textwidth}{toeplitz_scad_good} \\
      (a) & (b) \\ \widgraph{.45\textwidth}{toeplitz_scad_bad_1} &
      \widgraph{.45\textwidth}{toeplitz_scad_bad_2} \\ (c) & (d)
    \end{tabular}
  \end{center}
  \caption{Plots showing breakdown points as a function of the
    curvature parameter $\alpha_1$ of the loss function and the
    nonconvexity parameter $\mu$ of the penalty function. The loss
    comes from ordinary least squares linear regression, where
    covariates are fully-observed and sampled from a Gaussian
    distribution with covariance equal to a Toeplitz matrix. Panel (a)
    depicts the good behavior of Lasso-based linear regression. Panel
    (b) shows that local optima may still be well-behaved even when
    $2\alpha_1 < \mu$, although this situation is not covered by our
    theory. Panels (c) and (d) show that the good behavior nonetheless
    disintegrates for very small values of $\alpha_1$ when the
    regularizer is nonconvex.}
\label{FigBreakdown}
\end{figure}
			
\section{Discussion}
We have analyzed theoretical properties of local optima of regularized
$M$-estimators, where both the loss and penalty function are allowed
to be nonconvex. Our results are the first to establish that \emph{all
stationary points} of such nonconvex problems are close to the truth,
implying that any optimization method guaranteed to converge to a
stationary point will provide statistically consistent solutions. We show
concretely that a variant of composite gradient descent may be used to
obtain near-global optima in linear time, and verify our theoretical
results with simulations.

Future directions of research include further generalizing our
statistical consistency results to other nonconvex regularizers not
covered by our present theory, such as bridge penalties or
regularizers that do not decompose across coordinates. In addition, it
would be interesting to expand our theory to nonsmooth loss functions
such as the hinge loss. For both nonsmooth losses and nonsmooth
penalties (including capped-$\ell_1$), it remains an open question
whether a modified version of composite gradient descent may be used
to obtain near-global optima in polynomial time. Finally, it would be
useful to develop a general method for establishing RSC and RSM
conditions, beyond the specialized methods used for studying GLMs in
this paper.

\acks{The work of PL was supported from a Hertz Foundation Fellowship
  and an NSF Graduate Research Fellowship. In addition, PL
  acknowledges support from a PD Award, and MJW from a DY Award. MJW
  and PL were also partially supported by NSF grant
  CIF-31712-23800. The authors thank the associate editors and
  anonymous reviewers for their helpful suggestions that improved the
  manuscript.}


\appendix

\section{Properties of Regularizers}
\label{AppRegularizers}

In this section, we establish properties of some nonconvex
regularizers covered by our theory (Appendix~\ref{AppGeneralProps}) and
verify that specific regularizers satisfy Assumption~\ref{AsRho}
(Appendix~\ref{AppVerify}).  The properties given in
Appendix~\ref{AppGeneralProps} are used in the proof of
Theorem~\ref{TheoEll12Meta}.

\subsection{General Properties}
\label{AppGeneralProps}

We begin with some general properties of regularizers that satisfy
Assumption~\ref{AsRho}.

\begin{mylemma}
\label{LemRhoCond} \mbox{} \\
\begin{enumerate}
\item[(a)] Under conditions (i)--(ii) of Assumption~\ref{AsRho}, conditions (iii)
and (iv) together imply that $\myrho$ is $\lambda L$-Lipschitz as a
function of $t$. In particular, all subgradients and derivatives of
$\myrho$ are bounded in magnitude by $\lambda L$.
\item[(b)] Under the conditions of Assumption~\ref{AsRho}, we have
\begin{equation}
\label{EqnXmas}
\lambda L\|\beta\|_1 \le \rho_\lambda(\beta) + \frac{\mu}{2} \|\beta\|_2^2, \qquad \forall \beta \in \real^p.
\end{equation}
\end{enumerate}
\end{mylemma}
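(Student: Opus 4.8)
The plan is to reduce both claims to the univariate function $\myrho$ and then lift to $\real^\pdim$ using separability: both $\myrho(\beta)=\sum_j \myrho(\beta_j)$ and the norms $\|\beta\|_1=\sum_j|\beta_j|$, $\|\beta\|_2^2=\sum_j\beta_j^2$ split across coordinates, so a scalar inequality summed over $j$ yields the vector statement. Since $\myrho$ is symmetric (condition (i)), throughout I would argue for $t>0$ and extend to $t<0$ by symmetry.

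For part (a), the goal is the bound $|\myrho'(t)|\le\lambda L$ for every $t\neq0$. First I would observe that condition (ii) forces $\myrho'(t)\ge0$ for $t>0$, so only an upper bound is needed. The key step is to extract that upper bound from condition (iii): because $t\mapsto\myrho(t)/t$ is nonincreasing on $(0,\infty)$ and $\myrho$ is differentiable away from zero (condition (iv)), differentiating gives $t\,\myrho'(t)-\myrho(t)\le0$, i.e.\ $\myrho'(t)\le\myrho(t)/t$. Monotonicity then yields $\myrho(t)/t\le\lim_{s\to0^+}\myrho(s)/s$, and since $\myrho(0)=0$ this limit equals $\lim_{s\to0^+}\myrho'(s)=\lambda L$ by condition (iv). Chaining these gives $0\le\myrho'(t)\le\lambda L$ for $t>0$; as $\myrho'$ is odd by symmetry, $|\myrho'(t)|\le\lambda L$ for all $t\neq0$, the subdifferential at $0$ lies in $[-\lambda L,\lambda L]$, and $\lambda L$-Lipschitz continuity follows.

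For part (b), after reducing by separability to the scalar inequality $\lambda L\,|t|\le\doubrho(t)$, I would invoke condition (v): $\doubrho(t)=\myrho(t)+\frac{\mupar}{2}t^2$ is convex with $\doubrho(0)=0$. Since $\lim_{t\to0^+}\doubrho'(t)=\lim_{t\to0^+}\myrho'(t)=\lambda L$, the value $\lambda L$ is the right derivative of the convex function $\doubrho$ at the origin, so $\lambda L\in\partial\doubrho(0)$. The supporting-line inequality then gives $\doubrho(t)\ge\doubrho(0)+\lambda L\cdot t=\lambda L\,t$ for $t\ge0$; the symmetric subgradient $-\lambda L$ handles $t<0$, so $\doubrho(t)\ge\lambda L\,|t|$ for all $t$. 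Summing over coordinates recovers~\eqref{EqnXmas}.

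The main subtlety to handle carefully is the passage to the limit at the origin in part (a): identifying $\lim_{s\to0^+}\myrho(s)/s$ with $\lambda L$ via $\myrho(0)=0$ together with the limit of the derivative in condition (iv) (most cleanly through the mean value theorem, writing $\myrho(s)/s=\myrho'(\xi_s)$ with $\xi_s\in(0,s)$), and justifying the differentiation of $\myrho(t)/t$ only on $(0,\infty)$ where $\myrho$ is known to be differentiable. Everything else is a direct application of the structural conditions of Assumption~\ref{AsRho}.
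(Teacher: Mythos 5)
Your proposal is correct and takes essentially the same approach as the paper: part (a) hinges on the monotonicity of $t \mapsto \myrho(t)/t$ from condition (iii) together with the limit $\lim_{t \rightarrow 0^+} \myrho(t)/t = \lambda L$ from condition (iv), and part (b) hinges on the convexity of $\doubrho$ with a supporting line of slope $\lambda L$ at the origin (the paper realizes this by writing the gradient inequality at a point $s \in (0,t)$ and letting $s \rightarrow 0^+$, which is precisely your identification of $\lambda L$ as the right derivative of $\doubrho$ at zero). The only cosmetic difference is in (a), where the paper bounds difference quotients of $\myrho$ directly, whereas you first establish the pointwise derivative bound $\myrho'(t) \le \myrho(t)/t \le \lambda L$ and then pass back to the Lipschitz property.
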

	
\begin{proof}
\textbf{(a):} Suppose $0 \le t_1 \le t_2$. Then
 \begin{equation*}
  \frac{\myrho(t_2) - \myrho(t_1)}{t_2 - t_1} \le
  \frac{\myrho(t_1)}{t_1},
\end{equation*}
by condition (iii). Applying (iii) once more, we have
\begin{equation*}
\frac{\myrho(t_1)}{t_1} \le \lim_{t \rightarrow 0^+}
\frac{\myrho(t)}{t} = \lambda L,
\end{equation*}
where the last equality comes from condition (iv). Hence,
\begin{equation*}
  0 \le \myrho(t_2) - \myrho(t_1) \le \lambda L(t_2 - t_1).
\end{equation*}
A similar argument applies to the cases when one (or both) of $t_1$
and $t_2$ are negative.

\textbf{(b):} Clearly, it suffices to verify the inequality for the scalar case:
\begin{equation*}
\lambda L t \le \rho_\lambda(t) + \frac{\mu t^2}{2}, \qquad \forall t \in \real.
\end{equation*}
The inequality is trivial for $t = 0$. For $t > 0$, the convexity of the right-hand expression implies that for any $s \in (0, t)$, we have
\begin{equation*}
\left(\rho_\lambda(t) + \frac{\mu t^2}{2}\right) - \left(\rho_\lambda(0) + \frac{\mu \cdot 0^2}{2}\right) \ge (t-0) \cdot \left(\rho_\lambda'(s) + \mu s\right).
\end{equation*}
Taking a limit as $s \rightarrow 0^+$ then yields the desired inequality. The case $t < 0$ follows by symmetry.
\end{proof}

\begin{mylemma}
\label{LemEll1Reg}
Suppose $\rho_\lambda$ satisfies the conditions of Assumption~\ref{AsRho}. Let $v \in \real^\pdim$, and let $A$ denote the index set of
the $k$ largest elements of $v$ in magnitude. Suppose $\xi > 0$ is such that $\xi \rho_\lambda(v_A) - \rho_\lambda(v_{A^c}) \ge 0$. Then
\begin{align}
\label{EqnRhoDif}
\xi \myrho(v_A) - \myrho(v_{A^c}) \le \lambda L (\xi \|v_A\|_1 -
\|v_{A^c}\|_1).
\end{align}
Moreover, if $\betastar \in \real^p$ is $k$-sparse, then for an vector $\beta \in \real^p$ such that $\xi \rho_\lambda(\betastar) - \rho_\lambda(\beta) > 0$ and $\xi \ge 1$, we have
\begin{align}
\label{EqnKeyConsequence}
\xi \myrho(\betastar) - \myrho(\beta) & \leq \lambda L \big(\xi \|\nu_A\|_1 -
\|\nu_{A^c}\|_1\big),
\end{align}
where $\nu \defn \beta - \betastar$ and $A$ is the index set of the $k$ largest elements of $\nu$ in magnitude.
\end{mylemma}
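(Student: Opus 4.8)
The plan is to handle the two displays separately: prove \eqref{EqnRhoDif} directly and then derive \eqref{EqnKeyConsequence} from it. Throughout I would lean on two elementary facts about $\rho_\lambda$ established earlier. First, Lemma~\ref{LemRhoCond}(a) together with $\rho_\lambda(0)=0$ gives the pointwise bound $\rho_\lambda(t)\le\lambda L|t|$; combined with conditions (ii)--(iii) of Assumption~\ref{AsRho}, it also makes $\rho_\lambda$ nondecreasing in $|t|$ with the ratio $\rho_\lambda(t)/t$ nonincreasing on $(0,\infty)$. Second, I would use the subadditivity of $\rho_\lambda$ (already invoked in the proof of Theorem~\ref{TheoEll12Meta}), which itself follows from (ii)--(iii).

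For \eqref{EqnRhoDif}, set $h(t)\defn\lambda L|t|-\rho_\lambda(t)\ge 0$; since $\rho_\lambda$ is $\lambda L$-Lipschitz, $h$ is nondecreasing in $|t|$, and after cancelling the $\lambda L$-terms the target is equivalent to $\sum_{j\in A^c}h(v_j)\le\xi\sum_{i\in A}h(v_i)$. Let $t_*$ be the smallest magnitude among the coordinates of $v_A$ and set $r_*\defn\rho_\lambda(t_*)/t_*$ (read as $\lambda L$ if $t_*=0$). Because $A$ collects the $k$ largest magnitudes, the monotone ratio gives $\rho_\lambda(v_j)\ge r_*|v_j|$ on $A^c$ and $\rho_\lambda(v_i)\le r_*|v_i|$ on $A$. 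Writing $h(t)=|t|\bigl(\lambda L-\rho_\lambda(t)/|t|\bigr)$, these bounds yield $\sum_{A^c}h(v_j)\le(\lambda L-r_*)\|v_{A^c}\|_1$ and $\sum_{A}h(v_i)\ge(\lambda L-r_*)\|v_A\|_1$. Finally, the hypothesis $\rho_\lambda(v_{A^c})\le\xi\rho_\lambda(v_A)$ combined with the same two ratio estimates forces $r_*\|v_{A^c}\|_1\le\xi r_*\|v_A\|_1$, hence $\|v_{A^c}\|_1\le\xi\|v_A\|_1$; chaining the three inequalities closes the argument (the cases $r_*=0$ and $t_*=0$ are degenerate and handled trivially).

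For \eqref{EqnKeyConsequence}, let $S=\supp(\betastar)$ with $|S|\le k$. Separability gives $\rho_\lambda(\beta)=\rho_\lambda(\beta_S)+\rho_\lambda(\nu_{S^c})$, and the reverse-triangle consequence of subadditivity, $\rho_\lambda(\beta_S)\ge\rho_\lambda(\betastar_S)-\rho_\lambda(\nu_S)$, yields $\rho_\lambda(\betastar)-\rho_\lambda(\beta)\le\rho_\lambda(\nu_S)-\rho_\lambda(\nu_{S^c})$. Since $A$ is the top-$k$ support of $\nu$ and $|S|\le k$, monotonicity of $\rho_\lambda$ gives $\rho_\lambda(\nu_S)\le\rho_\lambda(\nu_A)$ and hence $\rho_\lambda(\nu_{S^c})\ge\rho_\lambda(\nu_{A^c})$, so $\rho_\lambda(\nu_S)-\rho_\lambda(\nu_{S^c})\le\rho_\lambda(\nu_A)-\rho_\lambda(\nu_{A^c})$. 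The strategy is then to use the sign hypothesis $\xi\rho_\lambda(\betastar)-\rho_\lambda(\beta)>0$ to certify $\xi\rho_\lambda(\nu_A)-\rho_\lambda(\nu_{A^c})\ge 0$, at which point Part 1 applied to $v=\nu$ converts $\xi\rho_\lambda(\nu_A)-\rho_\lambda(\nu_{A^c})$ into $\lambda L(\xi\|\nu_A\|_1-\|\nu_{A^c}\|_1)$, matching the claim.

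The main obstacle lies entirely in Part 2. The reduction above is immediate when $\xi=1$, but for $\xi>1$ the naive support decomposition leaves a residual term proportional to $(\xi-1)\rho_\lambda(\betastar)$, and it is exactly this cross term---not any quantity built from $\nu$---that must be absorbed using the hypothesis $\xi\rho_\lambda(\betastar)>\rho_\lambda(\beta)$ together with the $k$-sparsity of $\betastar$. Getting the bookkeeping of this term right, so that the final bound is expressed purely through $\|\nu_A\|_1$ and $\|\nu_{A^c}\|_1$, is the delicate step; I would expect to spend most of the effort verifying that the sign condition needed to invoke Part 1 follows from the stated hypothesis without relying on $\rho_\lambda(\betastar)\le\rho_\lambda(\nu_A)$, a comparison that can fail when $\beta$ is close to $\betastar$.
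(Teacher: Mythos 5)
Your argument for \eqref{EqnRhoDif} is correct, and it is essentially the paper's own proof in different notation: the paper works with the nondecreasing function $f(t) = t/\myrho(t)$ and pivots at $\|v_{A^c}\|_\infty$, whereas you work with $h(t) = \lambda L |t| - \myrho(t)$ and pivot the nonincreasing ratio $\myrho(t)/t$ at $t_* = \min_{i \in A} |v_i|$. Both arguments rest on exactly the same ingredients (conditions (iii) and (iv) of Assumption~\ref{AsRho}, plus the fact that $A$ separates the magnitudes), and your derivation of $\|v_{A^c}\|_1 \le \xi \|v_A\|_1$ from the sign hypothesis is equivalent bookkeeping to the paper's use of that hypothesis to sign the bracket $\xi \|v_A\|_1 - \|v_{A^c}\|_1$ before multiplying by $1/f(\|v_{A^c}\|_\infty) \le \lambda L$.

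For \eqref{EqnKeyConsequence}, your reduction is also exactly the paper's: separability and subadditivity give $\myrho(\betastar) - \myrho(\beta) \le \myrho(\nu_S) - \myrho(\nu_{S^c})$, monotonicity (using $|S| \le k$) upgrades this to $\myrho(\nu_A) - \myrho(\nu_{A^c})$, and then \eqref{EqnRhoDif} is applied with $v = \nu$. The point where you stop---the residual term of order $(\xi - 1)$ that the hypothesis must somehow absorb---is precisely the step at which the paper's own proof is unjustified. The paper asserts
\begin{equation*}
\xi \myrho(\betastar_S) - \myrho(\beta_S) \; \le \; \xi \myrho(\nu_S),
\end{equation*}
but subadditivity only yields $\xi \myrho(\betastar_S) - \myrho(\beta_S) \le \xi \myrho(\nu_S) + (\xi - 1)\myrho(\beta_S)$, and the discarded term $(\xi-1)\myrho(\beta_S)$ is nonnegative. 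Your suspicion that this cross term is fatal rather than merely delicate is correct: no bookkeeping can rescue the display as stated. Take $\beta = \betastar$ with $\myrho(\betastar) \neq 0$ and $\xi > 1$; then the hypothesis $\xi \myrho(\betastar) - \myrho(\beta) = (\xi - 1)\myrho(\betastar) > 0$ holds, $\nu = 0$, and \eqref{EqnKeyConsequence} would assert $(\xi - 1) \myrho(\betastar) \le 0$, which is false. So the second claim is false for $\xi > 1$ (it is fine for $\xi = 1$), and the obstacle you flagged is a flaw in the paper's proof rather than a missing idea on your side. A repairable version keeps coefficient $1$ on the left-hand side: if $\myrho(\beta) \le \myrho(\betastar)$, then $\myrho(\nu_A) - \myrho(\nu_{A^c}) \ge \myrho(\betastar) - \myrho(\beta) \ge 0$, so \eqref{EqnRhoDif} applies to $v = \nu$ for any $\xi \ge 1$ and gives $\myrho(\betastar) - \myrho(\beta) \le \lambda L\big(\xi \|\nu_A\|_1 - \|\nu_{A^c}\|_1\big)$; alternatively, one must carry the residual $(\xi-1)$ term into the conclusion. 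Either repair also requires adjusting the step in the proof of Theorem~\ref{TheoEll12Meta} where the lemma is invoked with $\xi = 3$.
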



\begin{proof}
We first establish~\eqref{EqnRhoDif}. Define $f(t) \defn
\frac{t}{\myrho(t)}$ for $t > 0$.  By our assumptions on $\myrho$, the
function $f$ is nondecreasing in $|t|$, so
\begin{equation}
\label{EqnF1}
\|v_{A^c}\|_1 = \sum_{j \in A^c} \myrho(v_j) \cdot f(|v_j|) \le
\sum_{j \in A^c} \myrho(v_j) \cdot f(\|v_{A^c}\|_\infty) =
\myrho(v_{A^c}) \cdot f\left(\|v_{A^c}\|_\infty\right).
\end{equation}
Again using the nondecreasing property of $f$, we have
\begin{align}
\label{EqnF2}
\myrho(v_A) \cdot f(\|v_{A^c}\|_\infty) = \sum_{j \in A} \myrho(v_j)
\cdot f(\|v_{A^c}\|_\infty) \le \sum_{j \in A} \myrho(v_j) \cdot
f(|v_j|) = \|v_A\|_1.
\end{align}
Note that for $t > 0$, we have
\begin{equation*}
f(t) \ge \lim_{s \rightarrow 0^+} f(s) = \lim_{s \rightarrow 0^+}
\frac{s - 0}{\myrho(s)- \myrho(0)} = \frac{1}{\lambda L},
\end{equation*}
where the last equality follows from condition (iv) of Assumption~\ref{AsRho}. Combining this result with~\eqref{EqnF1} and~\eqref{EqnF2} yields
\begin{equation*}
  0 \le \xi \myrho(v_A) - \myrho(v_{A^c}) \le \frac{1}{f(\|v_{A^c}\|_\infty)}
  \cdot \big( \xi \|v_A\|_1 - \|v_{A^c}\|_1\big) \le \lambda L
  \big(\xi \|v_A\|_1 - \|v_{A^c}\|_1\big),
\end{equation*}
as claimed.

We now turn to the proof of the bound~\eqref{EqnKeyConsequence}.
Letting $S \defn \supp(\betastar)$ denote the support of $\betastar$,
the triangle inequality and subadditivity of $\rho$ (see the remark
following Assumption~\ref{AsRho}; cf.\ Lemma 1 of~\citealp{CheGu13})
imply that
\begin{align*}
0 \le \xi \myrho(\betastar) - \myrho(\beta) & = \xi \myrho(\betastar_S) -
\myrho(\beta_S) - \myrho(\beta_{S^c}) \\
& \le \xi\myrho(\nu_S) - \myrho(\beta_{S^c}) \\
& = \xi\myrho(\nu_S) - \myrho(\nu_{S^c}) \\
& \le \xi\myrho(\nu_A) - \myrho(\nu_{A^c}) \\
& \le \lambda L \big(\xi\|\nu_A\|_1 - \|\nu_{A^c}\|_1\big),
\end{align*}
thereby completing the proof.
\end{proof}


\subsection{Verification for Specific Regularizers}
\label{AppVerify}

We now verify that Assumption~\ref{AsRho} is satisfied by the SCAD and
MCP regularizers. (The properties are trivial to verify for the Lasso
penalty.)

\begin{mylemma}
\label{LemSCAD}
The SCAD regularizer~\eqref{EqnSCADdefn} with parameter $a$ satisfies
the conditions of Assumption~\ref{AsRho} with $L = 1$ and $\mu =
\frac{1}{a-1}$.
\end{mylemma}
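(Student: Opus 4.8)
The plan is to verify the five conditions of Assumption~\ref{AsRho} one at a time, exploiting symmetry to restrict attention to $t \ge 0$. Condition (i) is immediate, since $\rho_\lambda(0)=0$ and the definition~\eqref{EqnSCADdefn} depends on $t$ only through $|t|$. For the remaining conditions the key object is the derivative of $\myrho$ on the positive axis, which I would compute piecewise: $\rho_\lambda'(t) = \lambda$ on $(0,\lambda]$, then $\rho_\lambda'(t) = (a\lambda - t)/(a-1)$ on $(\lambda, a\lambda]$, and finally $\rho_\lambda'(t) = 0$ on $(a\lambda, \infty)$. A quick check at the breakpoints shows the one-sided derivatives agree ($\rho_\lambda'(\lambda) = \lambda$ from both sides, $\rho_\lambda'(a\lambda) = 0$ from both sides), so $\rho_\lambda'$ is continuous on $(0,\infty)$. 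This establishes differentiability away from the origin, and since $\myrho(t) = \lambda|t|$ in a neighborhood of $0$, the function is subdifferentiable at $0$ with $\lim_{t\to 0^+}\rho_\lambda'(t) = \lambda$, giving condition (iv) with $L=1$. Condition (ii) then follows at once because $\rho_\lambda'(t) \ge 0$ throughout $(0,\infty)$.

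For condition (iii), I would argue that $\myrho$ is concave on $[0,\infty)$: its second derivative is $0$ on the first and third pieces and $-1/(a-1) < 0$ on the middle piece, and since the first derivative is continuous, $\rho_\lambda'$ is nonincreasing on $[0,\infty)$. Combined with $\myrho(0) = 0$, concavity yields that $t \mapsto \myrho(t)/t$ is nonincreasing: writing $s = (1 - s/t)\cdot 0 + (s/t)\cdot t$ for $0 < s < t$ and applying the concavity inequality gives $\myrho(s) \ge (s/t)\,\myrho(t)$, i.e.\ $\myrho(s)/s \ge \myrho(t)/t$.

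The one genuinely delicate step is condition (v), which also pins down the constant $\mu = 1/(a-1)$. The second derivative of $\myrho$ attains its minimum value $-1/(a-1)$ exactly on the middle segment, so adding $\frac{\mu}{2}t^2$ with $\mu = 1/(a-1)$ makes $\doubrho'' \ge 0$ on every piece (identically zero on the middle segment, and equal to $\mu$ on the other two). Since $\rho_\lambda'$ is continuous on $(0,\infty)$, so is $\doubrho'(t) = \rho_\lambda'(t) + \mu t$, and it is nondecreasing there; hence $\doubrho$ is convex on $[0,\infty)$. To extend convexity across the origin I would use that $\doubrho$ is even with $\doubrho'(t) \to \lambda$ as $t \to 0^+$ and, by symmetry, $\doubrho'(t) \to -\lambda$ as $t \to 0^-$: the derivative thus has an upward jump at $0$ and is nondecreasing on each side, so it is nondecreasing across $0$ as well, which gives convexity on all of $\real$. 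I expect this handling of the kink at the origin, together with the boundary matching between the three pieces, to be the main bookkeeping obstacle, since everywhere else the verification reduces to elementary differentiation of the linear and quadratic segments.
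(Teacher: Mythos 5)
Your proposal is correct and follows the same core approach as the paper's proof: compute the piecewise derivative of $\myrho$, identify $L=1$ from the behavior near the origin, and obtain condition (v) from the second-derivative bound $\frac{\partial^2}{\partial t^2}\myrho(t) \ge \frac{-1}{a-1}$. The only difference is one of completeness rather than method: the paper simply cites \cite{ZhaZha12} for conditions (i)--(iii) and does not discuss the kink at the origin, whereas you prove (i)--(iii) directly (via the concavity argument for (iii)) and explicitly verify that convexity of $\doubrho$ survives the jump in the derivative at $t=0$ --- both of which are valid and fill in details the paper leaves implicit.
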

\begin{proof}
Conditions (i)--(iii) were already verified
in~\cite{ZhaZha12}. Furthermore, we may easily compute the derivative
of the SCAD regularizer to be
\begin{equation}
  \label{EqnSCADDeriv}
  \frac{\partial}{\partial t} \myrho(t) = \sign(t) \cdot \left(\lambda
  \cdot \Ind\{|t| \le \lambda\} + \frac{(a\lambda - |t|)_+}{a-1} \cdot
  \Ind\{|t| > \lambda\}\right), \qquad t \neq 0,
\end{equation}
and any point in the interval $[-\lambda , \lambda]$ is a valid
subgradient at $t = 0$, so condition (iv) is satisfied for any $L \ge
1$.  Furthermore, we have $\frac{\partial^2}{\partial t^2} \myrho(t)
\ge \frac{-1}{a-1}$, so $\doubrho$ is convex whenever $\mu \ge
\frac{1}{a-1}$, giving condition (v).
\end{proof}

\begin{mylemma}
 \label{LemMCP}
 The MCP regularizer~\eqref{EqnMCPdefn} with parameter $b$ satisfies
 the conditions of Assumption~\ref{AsRho} with1 $L = 1$ and $\mu =
 \frac{1}{b}$.
\end{mylemma}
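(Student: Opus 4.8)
The plan is to mirror the verification carried out for the SCAD penalty in Lemma~\ref{LemSCAD}, exploiting the fact that the MCP has an elementary piecewise closed form. First I would evaluate the defining integral in~\eqref{EqnMCPdefn}: for $0 \le |t| \le \lambda b$ the integrand equals $1 - \frac{z}{\lambda b}$ over the whole range of integration, giving $\myrho(t) = \lambda |t| - \frac{t^2}{2b}$, whereas for $|t| > \lambda b$ the truncation $(\cdot)_+$ caps the integral at its value over $[0, \lambda b]$, giving the constant $\myrho(t) = \frac{\lambda^2 b}{2}$. This explicit expression makes every subsequent check essentially routine.

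Conditions (i) and (ii) are then immediate: $\myrho(0) = 0$, the closed form depends only on $|t|$ (hence $\myrho$ is symmetric), and the right derivative $\lambda(1 - \frac{t}{\lambda b})_+$ is nonnegative for $t > 0$ (hence $\myrho$ is nondecreasing on the positive axis). For condition (iii) I would compute $\myrho(t)/t$ on each piece: it equals $\lambda - \frac{t}{2b}$ for $0 < t \le \lambda b$ and $\frac{\lambda^2 b}{2 t}$ for $t > \lambda b$, and since both pieces are decreasing and agree at $t = \lambda b$, the ratio is nonincreasing on all of $(0,\infty)$. (Conditions (i)--(iii) also follow directly from the verification in~\cite{ZhaZha12}.)

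For condition (iv), differentiating yields $\rho_\lambda'(t) = \sign(t)\,\lambda\big(1 - \frac{|t|}{\lambda b}\big)_+$ for $t \neq 0$, which is continuous across the knot $|t| = \lambda b$ since both one-sided limits vanish there; thus $\myrho$ is differentiable at every $t \neq 0$. At $t = 0$ any value in $[-\lambda, \lambda]$ serves as a subgradient, and $\lim_{t \to 0^+}\rho_\lambda'(t) = \lambda$, which forces $L = 1$. Finally, for condition (v) I would examine the second derivative where it exists: $\rho_\lambda''(t) = -\frac{1}{b}$ on $(0, \lambda b)$ and $\rho_\lambda''(t) = 0$ on $(\lambda b, \infty)$, so the curvature of $\myrho$ is bounded below by $-\frac{1}{b}$, and choosing $\mu = \frac{1}{b}$ makes the curvature of $\doubrho(t) = \myrho(t) + \frac{\mu}{2} t^2$ nonnegative.

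The one point requiring care---and the step I expect to be the main (if minor) obstacle---is justifying convexity of $\doubrho$ globally despite the kink at the origin and the second-order knot at $|t| = \lambda b$. Since $\rho_\lambda'$ is continuous on $(0,\infty)$, convexity there reduces to checking that $\doubrho'(t) = \rho_\lambda'(t) + \mu t$ is nondecreasing: with $\mu = \frac{1}{b}$ this derivative equals the constant $\lambda$ on $(0, \lambda b]$ and then increases like $\frac{t}{b}$ for $t > \lambda b$, hence is nondecreasing. Combining this with the upward jump of $\doubrho'$ across $t = 0$ (from $-\lambda$ to $\lambda$) and symmetry establishes convexity of $\doubrho$ on all of $\real$, completing the verification with $L = 1$ and $\mu = \frac{1}{b}$.
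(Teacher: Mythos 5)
Your proof is correct and follows essentially the same route as the paper's: compute the MCP derivative $\lambda \, \sign(t)\left(1 - \frac{|t|}{\lambda b}\right)_+$ for $t \neq 0$ with subgradient $\lambda[-1,+1]$ at zero (giving $L = 1$), and note that the curvature is bounded below by $-\frac{1}{b}$, so that $\mu = \frac{1}{b}$ makes $\doubrho$ convex. The only differences are matters of detail rather than of approach: the paper outsources conditions (i)--(iii) to the verification in \cite{ZhaZha12}, where you check them directly from the closed form, and you are somewhat more careful than the paper in upgrading the pointwise curvature bound to global convexity of $\doubrho$ (handling the kink at the origin and the knot at $|t| = \lambda b$), which is a welcome bit of extra rigor.
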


\begin{proof}
Again, the conditions (i)--(iii) are already verified
in~\cite{ZhaZha12}. We may compute the derivative of the MCP
regularizer to be
\begin{equation}
\label{EqnMCPDeriv}
\frac{\partial}{\partial t} \myrho(t) = \lambda \cdot \sign(t) \cdot
\left(1 - \frac{|t|}{\lambda b}\right)_+, \qquad t \neq 0,
\end{equation}
with subgradient $\lambda [-1, +1]$ at $t = 0$, so condition (iv) is
again satisfied for any $L \ge 1$.  Taking another derivative, we have
$\frac{\partial^2}{\partial t^2} \myrho(t) \ge \frac{-1}{b}$, so
condition (v) of Assumption~\ref{AsRho} holds with $\mu =
\frac{1}{b}$.
\end{proof}


\section{Proofs of Corollaries in Section 3}
\label{AppCorollary}

In this section, we provide proofs of the corollaries to
Theorem~\ref{TheoEll12Meta} stated in Section~\ref{SecMain}.
Throughout this section, we use the convenient shorthand notation
\begin{align}
\label{EqnE}
\scriptE(\Delta) & \defn \inprod{\nabla \EmpLoss(\betastar + \Delta) -
  \nabla \EmpLoss(\betastar)}{\Delta}.
\end{align}

\subsection{General Results for Verifying RSC}
\label{AppGenRSC}

We begin with two lemmas that will be useful for establishing the RSC
conditions~\eqref{EqnRSC} in the special case where $\Loss_n$ is
convex. We assume throughout that $\|\Delta\|_1 \le 2R$, since
$\betastar$ and $\betastar + \Delta$ lie in the feasible set.

\begin{mylemma}
  \label{LemLocalRSC}
  Suppose $\Loss_n$ is convex. If condition~\eqref{EqnLocalRSC} holds
  and $n \ge 4R^2 \tau_1^2 \log p$, then
  \begin{equation}
    \label{EqnL2BdSpecific}
    \scriptE(\Delta) \ge \alpha_1 \|\Delta\|_2 - \sqrt{\frac{\log
        p}{n}} \|\Delta\|_1, \qquad \mbox{for all $\|\Delta\|_2 \ge
      1$.}
  \end{equation}
\end{mylemma}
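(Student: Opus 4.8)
The plan is to exploit convexity of $\Loss_n$ to "unfold" the local RSC bound~\eqref{EqnLocalRSC}, which is only assumed on the unit ball $\|\Delta\|_2 \le 1$, into the desired lower bound on the region $\|\Delta\|_2 \ge 1$. Fix any $\Delta$ with $\|\Delta\|_2 \ge 1$ and introduce the rescaling parameter $t_0 \defn 1/\|\Delta\|_2 \in (0,1]$, so that $\Delta' \defn t_0 \Delta$ satisfies $\|\Delta'\|_2 = 1$ and is therefore eligible for~\eqref{EqnLocalRSC}. The whole argument amounts to comparing $\scriptE$ evaluated at the full-length $\Delta$ against $\scriptE$ at the unit-length $\Delta'$.

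First I would record the key monotonicity fact. Since $\Loss_n$ is convex, the scalar map $t \mapsto \inprod{\nabla \Loss_n(\betastar + t\Delta)}{\Delta}$ is nondecreasing, so setting $h(t) \defn \inprod{\nabla \Loss_n(\betastar + t\Delta) - \nabla \Loss_n(\betastar)}{\Delta}$ we have $h$ nondecreasing with $h(0) = 0$. By definition $\scriptE(\Delta) = h(1)$, while $\scriptE(\Delta') = t_0\, h(t_0)$ because $\Delta' = t_0\Delta$. Since $t_0 \le 1$, monotonicity gives $\scriptE(\Delta) = h(1) \ge h(t_0) = \scriptE(\Delta')/t_0$.

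Next I would apply the local condition~\eqref{EqnLocalRSC} to $\Delta'$, using $\|\Delta'\|_2 = 1$ and $\|\Delta'\|_1 = t_0\|\Delta\|_1$, to obtain $\scriptE(\Delta') \ge \alpha_1 - \tau_1 \frac{\log p}{n}\, t_0^2 \|\Delta\|_1^2$. Dividing by $t_0$ and combining with the previous inequality, and then substituting $1/t_0 = \|\Delta\|_2$, yields $\scriptE(\Delta) \ge \alpha_1 \|\Delta\|_2 - \tau_1 \frac{\log p}{n} \frac{\|\Delta\|_1^2}{\|\Delta\|_2}$.

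Finally I would control the residual term using the standing feasibility bound $\|\Delta\|_1 \le 2R$ together with $\|\Delta\|_2 \ge 1$: this gives $\frac{\|\Delta\|_1^2}{\|\Delta\|_2} \le \|\Delta\|_1^2 \le 2R\|\Delta\|_1$, so the residual is at most $2R\tau_1 \frac{\log p}{n}\|\Delta\|_1$. The sample-size hypothesis $n \ge 4R^2\tau_1^2 \log p$ is precisely the condition that makes $2R\tau_1 \frac{\log p}{n} \le \sqrt{\frac{\log p}{n}}$, which converts the residual into $\sqrt{\frac{\log p}{n}}\|\Delta\|_1$ and completes the proof. The only mildly delicate step is the convexity/rescaling comparison in the second paragraph; once that is in hand, the remainder is routine algebra driven entirely by the sample-size threshold.
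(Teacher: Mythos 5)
Your proposal is correct and follows essentially the same argument as the paper: both rescale $\Delta$ to the unit sphere via $t_0 = 1/\|\Delta\|_2$, use convexity of $\Loss_n$ (monotonicity of the directional gradient, which is exactly the paper's $f'(1) - f'(0) \ge f'(t) - f'(0)$ step) to relate $\scriptE(\Delta)$ to $\scriptE(t_0\Delta)/t_0$, apply~\eqref{EqnLocalRSC} to the unit-norm vector, and then absorb the residual using $\|\Delta\|_1 \le 2R$, $\|\Delta\|_2 \ge 1$, and the sample-size condition. The only difference is cosmetic bookkeeping in the final algebra.
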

\begin{proof}
  Fix an arbitrary $\Delta \in \real^\pdim$ with $\|\Delta\|_2 \ge
  1$. Since $\Loss_n$ is convex, the function $f:[0,1] \rightarrow
  \real$ given by $f(t) \defn \EmpLoss(\betastar + t \Delta)$ is also
  convex, so $f'(1) - f'(0) \geq f'(t) - f'(0)$ for all $t \in [0,1]$.
  Computing the derivatives of $f$ yields the inequality
  \begin{equation*}
    \scriptE(\Delta) \; = \; \inprod{\nabla \EmpLoss(\betastar +
      \Delta) - \nabla \EmpLoss(\betastar)}{\Delta} \ge \frac{1}{t} \,
    \inprod{\nabla \EmpLoss(\betastar + t\Delta) - \nabla
      \EmpLoss(\betastar)}{t\Delta}.
  \end{equation*}
  Taking $t = \frac{1}{\|\Delta\|_2} \in (0,1]$ and applying
condition~\eqref{EqnLocalRSC} to the rescaled vector
$\frac{\Delta}{\|\Delta\|_2}$ then yields
\begin{align*}
\scriptE(\Delta) & \geq \|\Delta\|_2 \left(\alpha_1 - \tau_1
\frac{\log p}{n} \frac{\|\Delta\|_1^2}{\|\Delta\|_2^2} \right) \\
& \ge \|\Delta\|_2 \left(\alpha_1 - \frac{2R \tau_1 \log p}{n}
\frac{\|\Delta\|_1}{\|\Delta\|_2^2}\right) \\
& \geq \|\Delta\|_2 \left ( \alpha_1 - \sqrt{\frac{\log p}{n}}
\frac{\|\Delta\|_1}{\|\Delta\|_2} \right) \\
& = \alpha_1 \|\Delta\|_2 - \sqrt{\frac{\log p}{n}} \|\Delta\|_1,
\end{align*}
where the third inequality uses the assumption on the relative scaling
of $(n,p)$ and the fact that $\|\Delta\|_2 \ge 1$.
\end{proof}

On the other hand, if~\eqref{EqnLocalRSC} holds globally
over $\Delta \in \real^p$, we obtain~\eqref{EqnL2Bd} for
free:

\begin{mylemma}
\label{LemGlobalRSC}
If inequality~\eqref{EqnLocalRSC} holds for all $\Delta \in \real^p$
and $\numobs \ge 4R^2 \tau_1^2 \log p$, then~\eqref{EqnL2Bd} holds, as well.
\end{mylemma}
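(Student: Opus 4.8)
The plan is to derive~\eqref{EqnL2Bd} directly from the globally valid form of~\eqref{EqnLocalRSC}, exploiting the feasibility bound $\|\Delta\|_1 \le 2R$ together with the sample-size condition. Concretely, I would fix an arbitrary $\Delta \in \real^\pdim$ with $\|\Delta\|_2 \ge 1$ and apply the hypothesized global inequality to obtain the quadratic lower bound
\begin{equation*}
\scriptE(\Delta) \ge \alpha_1 \|\Delta\|_2^2 - \tau_1 \frac{\log p}{n} \|\Delta\|_1^2.
\end{equation*}
The idea is then to downgrade each quadratic term to the corresponding linear term demanded by~\eqref{EqnL2Bd}.

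For the curvature term, since $\|\Delta\|_2 \ge 1$ we have $\|\Delta\|_2^2 \ge \|\Delta\|_2$, so $\alpha_1 \|\Delta\|_2^2 \ge \alpha_1 \|\Delta\|_2$. For the penalty term, I would use the feasibility bound $\|\Delta\|_1 \le 2R$ (valid throughout this subsection, since $\betastar$ and $\betastar + \Delta$ both lie in the feasible set) to write $\|\Delta\|_1^2 \le 2R \|\Delta\|_1$, whence
\begin{equation*}
\tau_1 \frac{\log p}{n} \|\Delta\|_1^2 \le 2R\tau_1 \frac{\log p}{n} \|\Delta\|_1.
\end{equation*}

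The one substantive step is converting the factor $\frac{\log p}{n}$ into $\sqrt{\frac{\log p}{n}}$. Here the hypothesis $\numobs \ge 4R^2 \tau_1^2 \log p$ gives $\sqrt{\frac{\log p}{n}} \le \frac{1}{2R\tau_1}$, so $2R\tau_1 \sqrt{\frac{\log p}{n}} \le 1$ and therefore $2R\tau_1 \frac{\log p}{n} \le \sqrt{\frac{\log p}{n}}$. Combining the three bounds yields
\begin{equation*}
\scriptE(\Delta) \ge \alpha_1 \|\Delta\|_2 - \sqrt{\frac{\log p}{n}} \|\Delta\|_1,
\end{equation*}
which is exactly~\eqref{EqnL2Bd} with $\alpha_2 = \alpha_1$ and $\tau_2 = 1$, matching the constants produced in Lemma~\ref{LemLocalRSC}. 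Since $\Delta$ was arbitrary among vectors with $\|\Delta\|_2 \ge 1$, this establishes the claim.

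I do not expect any real obstacle: unlike Lemma~\ref{LemLocalRSC}, where one must first rescale $\Delta$ to unit norm and invoke convexity to transport the local bound outward, here the quadratic bound is assumed to hold globally, so the entire argument reduces to the two elementary term-by-term estimates above plus the scaling conversion. The only point requiring care is keeping track of the feasibility constraint $\|\Delta\|_1 \le 2R$, which is what allows the quadratic $\ell_1$-term to be linearized and is precisely the quantity controlled by the sample-size assumption.
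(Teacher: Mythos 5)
Your proposal is correct and follows essentially the same argument as the paper's proof: lower-bound $\alpha_1\|\Delta\|_2^2$ by $\alpha_1\|\Delta\|_2$ using $\|\Delta\|_2 \ge 1$, linearize the $\ell_1$-term via the feasibility bound $\|\Delta\|_1 \le 2R$, and absorb the factor $2R\tau_1\sqrt{\tfrac{\log p}{n}} \le 1$ using the sample-size condition. The paper compresses these three steps into a single displayed chain of inequalities, but the content is identical, including the resulting constants $\alpha_2 = \alpha_1$ and $\tau_2 = 1$.
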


\begin{proof}
Suppose $\|\Delta\|_2 \ge 1$. Then
\begin{equation*}
\alpha_1 \|\Delta\|_2^2 - \tau_1 \frac{\log p}{n} \|\Delta\|_1^2 \ge
\alpha_1 \|\Delta\|_2 - 2R \tau_1 \frac{\log p}{n} \|\Delta\|_1 \ge
\alpha_1 \|\Delta\|_2 - \sqrt{\frac{\log p}{n}} \|\Delta\|_1,
\end{equation*}
again using the assumption on the scaling of $(n,p)$.
\end{proof}

\subsection{Proof of Corollary~\ref{CorLinear}}
\label{AppCorLinear}

Note that $\scriptE(\Delta) = \Delta^T \GamHat \Delta$, so in
particular,
\begin{align*}
\scriptE(\Delta) & \ge \Delta^T \Sigma_x \Delta - |\Delta^T(\Sigma_x -
\GamHat) \Delta|.
\end{align*}
Applying Lemma 12 in~\cite{LohWai11a} with $s =
\frac{n}{\log p}$ to bound the second term, we have
\begin{align*}
\scriptE(\Delta) & \ge \lambda_{\min}(\Sigma_x) \|\Delta\|_2^2 -
\left(\frac{\lambda_{\min}(\Sigma_x)}{2} \|\Delta\|_2^2 + \frac{c \log
  p}{n} \|\Delta\|_1^2\right) \\
& = \frac{\lambda_{\min}(\Sigma_x)}{2} \|\Delta\|_2^2 - \frac{c \log
  p}{n} \|\Delta\|_1^2,
\end{align*}
a bound which holds for all $\Delta \in \real^\pdim$ with probability
at least $1 - c_1 \exp(-c_2 \numobs)$ whenever $\numobs \succsim \kdim
\log \pdim$. Then Lemma~\ref{LemGlobalRSC} in Appendix~\ref{AppGenRSC}
implies that the RSC condition~\eqref{EqnL2Bd} holds.  It remains
to verify the validity of the specified choice of $\lambda$.  We have
\begin{align*}
\|\nabla \EmpLoss(\betastar)\|_\infty = \|\GamHat \betastar -
\gamhat\|_\infty & = \|(\gamhat - \Sigma_x \betastar) + (\Sigma_x -
\GamHat)\betastar\|_\infty \\
& \leq \|(\gamhat - \Sigma_x \betastar)\|_\infty + \|(\Sigma_x -
\GamHat)\betastar\|_\infty.
\end{align*}
As shown in previous work~\citep{LohWai11a}, both of these terms are
upper-bounded by $c' \, \varphi \sqrt{\frac{\log \pdim}{\numobs}}$ with high
probability.  Consequently, the claim in the corollary follows by
applying Theorem~\ref{TheoEll12Meta}.


\subsection{Proof of Corollary~\ref{CorGLM}}
\label{AppCorGLM}

In the case of GLMs, we have
\begin{align*}
\scriptE(\Delta) & = \frac{1}{\numobs} \sum_{i=1}^\numobs
(\psi'(\inprod{x_i} {\betastar + \Delta}) -
\psi'(\inprod{x_i}{\betastar})) \, x_i^T\Delta.
\end{align*}
Applying the mean value theorem, we find that
\begin{align*}
\scriptE(\Delta) & = \frac{1}{\numobs} \sum_{i=1}^\numobs \psi''(
\inprod{x_i}{\betastar} + t_i \, \inprod{x_i}{\Delta}) \,
\big(\inprod{x_i}{ \Delta} \big)^2,
\end{align*}
where $t_i \in [0,1]$. From (the proof of) Proposition 2 in~\cite{NegRavWaiYu12}, we then have
\begin{equation}
  \label{EqnLocalRSC2}
  \scriptE(\Delta) \ge \alpha_1 \|\Delta\|_2^2 - \tau_1
  \sqrt{\frac{\log p}{n}} \|\Delta\|_1 \|\Delta\|_2, \qquad \forall
  \|\Delta\|_2 \le 1,
\end{equation}
with probability at least $1 - c_1 \exp(-c_2 n)$, for an appropriate choice of $\alpha_1$. Note that by the arithmetic
mean-geometric mean inequality,
\begin{equation*}
  \tau_1 \sqrt{\frac{\log p}{n}} \|\Delta\|_1 \|\Delta\|_2 \le
  \frac{\alpha_1}{2} \|\Delta\|_2^2 + \frac{\tau_1^2}{2\alpha_1}
\frac{\log p}{n} \|\Delta\|_1^2,
\end{equation*}
and consequently,
\begin{equation*}
  \scriptE(\Delta) \ge \frac{\alpha_1}{2} \|\Delta\|_2^2 -
  \frac{\tau_1^2}{2\alpha_1} \frac{\log p}{n} \|\Delta\|_1^2,
\end{equation*}
which establishes~\eqref{EqnLocalRSC}. Inequality~\eqref{EqnL2Bd} then
follows via Lemma~\ref{LemLocalRSC} in Appendix~\ref{AppGenRSC}.

It remains to show that there are universal constants $(c, c_1, c_2)$
such that
\begin{align}
  \label{EqnLambdaBeta}
\mprob \left(\|\nabla \EmpLoss(\betastar)\|_\infty \geq c
\sqrt{\frac{\log \pdim}{\numobs}}\right) & \leq c_1 \exp(-c_2 \log
\pdim).
\end{align}
For each $1 \le i \le n$ and $1 \le j \le p$,
define the random variable $V_{ij} \defn (\psi'(x_i^T \betastar) -
y_i) x_{ij}$.  Our goal is to bound $\max_{j=1,
  \ldots, \pdim } |\frac{1}{\numobs} \sum_{i=1}^\numobs V_{ij}|$.
Note that
\begin{align}
  \label{EqnBikeRide}
  \mprob \left[ \max_{j=1, \ldots, \pdim} \big|\frac{1} {\numobs}
    \sum_{i=1}^\numobs V_{ij} \big| \geq \delta \right] & \leq
  \mprob[\Aevent^c] + \mprob \left[ \max_{j=1, \ldots, \pdim} \big|
    \frac{1}{\numobs} \sum_{i=1}^\numobs V_{ij} \big |\ge \delta \; \mid
    \; \Aevent \right],
\end{align}
where
\begin{equation*}
	\Aevent \defn \left\{\max \limits_{j=1, \ldots, \pdim}
  \left\{ \frac{1}{\numobs} \sum_{i=1}^\numobs x_{ij}^2 \right\} \le
  2\E[x_{ij}^2] \right\}.
\end{equation*}
Since the $x_{ij}$'s are sub-Gaussian and
\mbox{$\numobs \succsim \log \pdim$,} there exist universal constants
$(c_1, c_2)$ such that $\mprob[\Aevent^c] \leq c_1 \exp(-c_2
\numobs)$.  The last step is to bound the second term on the right
side of~\eqref{EqnBikeRide}.  For any $t \in \real$, we
have
\begin{align*}
\log \E[\exp(t V_{ij}) \mid x_i] & = \log\left[ \exp(t x_{ij}
  \psi'(x_i^T \betastar)\right] \cdot \E[\exp(-t x_{ij} y_i)] \notag
\\
 & = t x_{ij} \psi'(x_i^T \betastar) + \left(\psi(-t x_{ij} + x_i^T
\betastar) - \psi(x_i^T \betastar) \right),
\end{align*}
using the fact that $\psi$ is the cumulant generating function for the
underlying exponential family. Thus, by a Taylor series expansion,
there is some $v_i \in [0,1]$ such that
\begin{align}
  \log \E[\exp(t V_{ij}) \mid x_i] & = \frac{t^2 x_{ij}^2}{2} \,
  \psi''(x_i^T \betastar - v_i \, t x_{ij}) \; \leq \; \frac{\alpha_u
    t^2 x_{ij}^2}{2},
\end{align}
where the inequality uses the boundedness of $\psi''$.  Consequently,
conditioned on the event $\Aevent$, the variable $\frac{1}{\numobs}
\sum_{i=1}^\numobs V_{ij}$ is sub-Gaussian with parameter at most
$\kappa = \alpha_u \cdot \max_{j = 1, \ldots, \pdim} \Exs[x_{ij}^2]$,
for each $j = 1, \ldots, \pdim$. By a union bound, we then have
\begin{align*}
\mprob \left[ \max_{j=1, \ldots, \pdim} \big| \frac{1}{\numobs}
  \sum_{i=1}^\numobs V_{ij} \big |\ge \delta \; \mid \; \Aevent
  \right] & \leq \pdim \, \exp\left(- \frac{\numobs \delta^2}{2
  \kappa^2} \right).
\end{align*}
The claimed $\ell_1$- and $\ell_2$-bounds then follow directly from
Theorem~\ref{TheoEll12Meta}.


\subsection{Proof of Corollary~\ref{CorGLasso}}
\label{AppCorGLasso}

We first verify condition~\eqref{EqnLocalRSC} in the case where
$\frobnorm{\Delta} \leq 1$.  A straightforward calculation yields
\begin{equation*}
\nabla^2 \Loss_n(\Theta) = \Theta^{-1} \otimes \Theta^{-1} =
\left(\Theta \otimes \Theta\right)^{-1}.
\end{equation*}
Moreover, letting $\myvec(\Delta) \in \real^{\pdim^2}$ denote the
vectorized form of the matrix $\Delta$, applying the mean value
theorem yields
\begin{align}
\label{EqnMVT}
\scriptE(\Delta) = \myvec(\Delta)^T \left(\nabla^2 \EmpLoss(\Thetastar
+ t \Delta) \right) \myvec(\Delta) & \geq \lambda_{\min}(\nabla^2
\EmpLoss(\Thetastar + t \Delta)) \; \frobnorm{\Theta}^2,
\end{align}
for some $t \in [0,1]$.  By standard properties of the Kronecker
product~\citep{HorJoh90}, we have
\begin{align*}
\lambda_{\min}(\nabla^2 \EmpLoss(\Thetastar + t\Delta)) & =
\opnorm{\Thetastar + t\Delta}_2^{-2} \ge \left(\opnorm{\Thetastar}_2
+ t\opnorm{\Delta}_2\right)^{-2} \\
& \geq \left(\opnorm{\Thetastar}_2 + 1\right)^{-2},
\end{align*}
using the fact that $\opnorm{\Delta}_2 \le \opnorm{\Delta}_F \le
1$. Plugging back into~\eqref{EqnMVT} yields
\begin{equation*}
\scriptE(\Delta) \ge \left(\opnorm{\Thetastar}_2 + 1 \right)^{-2} \,
\frobnorm{\Theta}^2,
\end{equation*}
so~\eqref{EqnLocalRSC} holds with
$\alpha_1 = \left(\opnorm{\Thetastar}_2 + 1\right)^{-2}$ and $\tau_1 =
0$.  Lemma~\ref{LemGlobalRSC} then implies~\eqref{EqnL2Bd}
with $\alpha_2 = \left(\opnorm{\Thetastar}_2 + 1\right)^{-2}$.
Finally, we need to establish that the given choice of $\lambda$
satisfies the requirement~\eqref{EqnLambdaChoice} of
Theorem~\ref{TheoEll12Meta}.  By the assumed deviation
condition~\eqref{EqnSigConc}, we have
\begin{equation*}
\opnorm{\nabla \Loss_n(\Thetastar)}_{\max} = \opnorm{\SigHat -
  (\Theta^{*})^{-1}}_{\max} = \opnorm{\SigHat - \Sigma}_{\max} \;
\leq \; c_0 \sqrt{\frac{\log p}{n}}.
\end{equation*}
Applying Theorem~\ref{TheoEll12Meta} then implies the desired result.


\section{Auxiliary Optimization-Theoretic Results}
\label{AppFastGlobal}

In this section, we provide proofs of the supporting lemmas used in
Section~\ref{SecOpt}.

\subsection{Derivation of Three-Step Procedure}
\label{AppCompIterate}

We begin by deriving the correctness of the three-step procedure given
in Section~\ref{SecUpdates}. Let $\betahat$ be the unconstrained
optimum of the program~\eqref{EqnCompUnconstr}.  If $g_{\lambda,
  \mu}(\betahat) \le R$, we clearly have the update given in step
(2). Suppose instead that $g_{\lambda, \mu}(\betahat) > R$. Then since
the program~\eqref{EqnCompGradUpdate} is convex, the iterate
$\beta^{t+1}$ must lie on the boundary of the feasible set; i.e.,
\begin{equation}
\label{EqnGBound}
g_{\lambda, \mu}(\beta^{t+1}) = R.
\end{equation}

By Lagrangian duality, the program~\eqref{EqnCompGradUpdate} is also
equivalent to
\begin{equation*}
\beta^{t+1} \in \arg\min_{g_{\lambda, \mu}(\beta) \le R'}
\left\{\frac{1}{2} \left\|\beta - \left(\beta^t - \frac{\nabla
  \Lossbar_n(\beta^t)}{\eta}\right)\right\|_2^2\right\},
\end{equation*}
for some choice of constraint parameter $R'$.  Note that this is
projection of $\beta^t - \frac{\nabla \Lossbar_n(\beta^t)}{\eta}$ onto
the set $\{ \beta \in \real^\pdim \, \mid \, \SIDESPEC(\beta) \leq R'
\}$. Since projection decreases the value of $g_{\lambda, \mu}$,
equation~\eqref{EqnGBound} implies that
\begin{equation*}
g_{\lambda, \mu}\left(\beta^t - \frac{\nabla
  \Lossbar_n(\beta^t)}{\eta}\right) \ge R.
\end{equation*}
In fact, since the projection will shrink the vector to the boundary
of the constraint set, \eqref{EqnGBound} forces $R' =
R$. This yields the update~\eqref{EqnRhobarProj} appearing in step
(3).

\subsection{Derivation of Updates for SCAD and MCP}
\label{AppUpdates}

We now derive the explicit form of the updates~\eqref{EqnXSCAD}
and~\eqref{EqnXMCP} for the SCAD and MCP regularizers, respectively. We
may rewrite the unconstrained program~\eqref{EqnCompUnconstr} as
\begin{align}
\label{EqnCompUnconstr2}
\beta^{t+1} & \in \arg \min_{\beta \in \real^\pdim} \left\{\frac{1}{2}
\left\|\beta - \left(\beta^t - \frac{\nabla
  \Lossbar_n(\beta^t)}{\eta}\right)\right\|_2^2 + \frac{1}{\eta} \cdot
\rho_\lambda(\beta) + \frac{\mu}{2\eta} \|\beta\|_2^2\right\} \notag \\
& = \arg \min_{\beta \in \real^\pdim} \left\{\left(\frac{1}{2} +
\frac{\mu}{2\eta} \right) \|\beta\|_2^2 - \beta^T \left(\beta^t -
\frac{\nabla \Lossbar_n(\beta^t)}{\eta}\right) + \frac{1}{\eta} \cdot
\rho_\lambda(\beta) \right\} \notag \\
& = \arg \min_{\beta \in \real^\pdim} \left\{\frac{1}{2} \left\|\beta
- \frac{1}{1 + \mu/\eta} \left(\beta^t - \frac{\nabla
  \Lossbar_n(\beta^t)}{\eta}\right)\right\|_2^2 + \frac{1/\eta}{1 +
  \mu/\eta} \cdot \rho_\lambda(\beta)\right\}.
\end{align}
Since the program in the last line of
equation~\eqref{EqnCompUnconstr2} decomposes by coordinate, it
suffices to solve the scalar optimization problem
\begin{equation}
\label{EqnScalarPenalty}
\xhat \in \arg\min_x \left\{\frac{1}{2} (x-z)^2 + \nu \rho(x;
\lambda)\right\},
\end{equation}
for general $z \in \real$ and $\nu > 0$. \\

We first consider the case when $\rho$ is the SCAD penalty. The
solution $\xhat$ of the program~\eqref{EqnScalarPenalty} in the case
when $\nu = 1$ is given in~\cite{FanLi01}; the
expression~\eqref{EqnXSCAD} for the more general case comes from
writing out the subgradient of the objective as
\begin{equation*}
  (x-z) + \nu \rho'(x; \lambda) = \begin{cases} (x-z) + \nu \lambda
    [-1, +1] & \mbox{if } x = 0, \\
(x-z) + \nu \lambda & \mbox{if } 0 < x \le \lambda, \\
(x-z) + \frac{\nu(a\lambda - x)}{a-1} & \mbox{if } \lambda \le x \le
    a \lambda, \\ 
x-z & \mbox{if } x \ge a\lambda,
  \end{cases}
\end{equation*}
using the equation for the SCAD derivative~\eqref{EqnSCADDeriv}, and
setting the subgradient equal to zero.

Similarly, when $\rho$ is the MCP parametrized by $(b, \lambda)$, the
subgradient of the objective takes the form
\begin{equation*}
  (x-z) + \nu \rho'(x; \lambda) = \begin{cases} (x-z) + \nu \lambda
    [-1, +1] & \mbox{if } x = 0, \\
(x-z) + \nu \lambda \left(1 - \frac{x}{b \lambda}\right) & \mbox{if }
    0 < x \le b \lambda, \\
x-z & \mbox{if } x \ge b\lambda,
\end{cases}
\end{equation*}
using the expression for the MCP derivative~\eqref{EqnMCPDeriv},
leading to the closed-form solution given in~\eqref{EqnXMCP}. This agrees with the expression provided in~\cite{BreHua11} for the special case when $\nu = 1$.

\subsection{Proof of Lemma~\ref{LemICB}}

We first show that if $\lambda \ge \frac{8}{L} \cdot \|\nabla
\Loss_n(\betastar)\|_\infty$, then for any feasible $\beta$ such that
\begin{equation}
  \label{EqnTol}
  \phi(\beta) \le \phi(\betastar) + \etabar,
\end{equation}
we have
\begin{equation}
  \label{EqnConeInter}
  \|\beta - \betastar\|_1 \le 8 \sqrt{k}\|\beta - \betastar\|_2 + 2\cdot \min\left(\frac{2\etabar}{\lambda L}, R\right).
\end{equation}

Defining the error vector $\Delta \defn \beta - \betastar$, \eqref{EqnTol} implies
\begin{equation*}
  \Loss_n(\betastar + \Delta) + \rho_\lambda(\betastar + \Delta) \le
\Loss_n(\betastar) + \rho_\lambda(\betastar) + \etabar,
\end{equation*}
so subtracting $\inprod{\nabla \Loss_n(\betastar)}{\Delta}$ from both sides gives
\begin{equation}
  \label{EqnOde}
  \scriptT(\betastar + \Delta, \betastar) + \rho_\lambda(\betastar +
  \Delta) - \rho_\lambda(\betastar) \le -\inprod{\nabla
    \Loss_n(\betastar)}{\Delta} + \etabar.
\end{equation}
We divide the argument into two cases. First suppose $\|\Delta\|_2 \le 3$. Note that if $\etabar \ge \frac{\lambda L}{4} \|\Delta\|_1$, the claim~\eqref{EqnConeInter} is trivially true; so assume $\etabar \le \frac{\lambda L}{4} \|\Delta\|_1$. Then the RSC condition~\eqref{EqnLocalRSCFirst}, together with~\eqref{EqnOde}, implies that
\begin{align}
  \label{EqnConeInter2}
  \alpha_1 \|\Delta\|_2^2 - \tau_1 \frac{\log p}{n} \|\Delta\|_1^2 +
  \rho_\lambda(\betastar + \Delta) - \rho_\lambda(\betastar) & \le \|\nabla
  \Loss_n(\betastar)\|_\infty \cdot \|\Delta\|_1 + \etabar \notag \\
& \le  \frac{\lambda L}{8} \|\Delta\|_1 + \frac{\lambda L}{4} \|\Delta\|_1.
\end{align}
Rearranging and using the assumption $\lambda L \ge 16 R\tau_1 \frac{\log p}{n}$, along with Lemma~\ref{LemRhoCond} in Appendix~\ref{AppGeneralProps}, we then have
\begin{align*}
\alpha_1 \|\Delta\|_2^2 & \le \rho_\lambda(\betastar) - \rho_\lambda(\betastar + \Delta) + \frac{\lambda L}{2} \|\Delta\|_1 \\
& \le \rho_\lambda(\betastar) - \rho_\lambda(\betastar + \Delta) + \frac{\rho_\lambda(\betastar) + \rho_\lambda(\betastar + \Delta)}{2} + \frac{\mu}{4} \|\Delta\|_2^2,
\end{align*}
implying that
\begin{equation*}
0 \le \left(\alpha_1 - \frac{\mu}{4}\right) \|\Delta\|_2^2 \le \frac{3}{2} \rho_\lambda(\betastar) - \frac{1}{2} \rho_\lambda(\betastar + \Delta),
\end{equation*}
so
\begin{equation}
\label{EqnSleigh}
\rho_\lambda(\betastar) - \rho_\lambda(\betastar + \Delta) \le 3 \rho_\lambda(\betastar) - \rho_\lambda(\betastar + \Delta) \le 3\lambda L\|\Delta_A\|_1 - \lambda L\|\Delta_{A^c}\|_1,
\end{equation}
by Lemma~\ref{LemEll1Reg} in Appendix~\ref{AppGeneralProps}. Furthermore, note that  the bound~\eqref{EqnConeInter2} also implies that
\begin{equation}
  \label{EqnEtabarBd}
  \rho_\lambda(\betastar + \Delta) - \rho_\lambda(\betastar) \le \frac{\lambda L}{2} \|\Delta\|_1 + \etabar.
\end{equation}
Combining~\eqref{EqnSleigh} and~\eqref{EqnEtabarBd} then gives
\begin{equation*}
\|\Delta_{A^c}\|_1  - 3 \|\Delta_A\|_1 \le \frac{1}{2} \|\Delta\|_1 + \frac{\etabar}{\lambda L} \le \frac{1}{2} \|\Delta_A\|_1 + \frac{1}{2} \|\Delta_{A^c}\|_1 + \frac{\etabar}{\lambda L},
\end{equation*}
so
\begin{equation*}
\|\Delta_{A^c}\|_1 \le 7 \|\Delta_A\|_1 + \frac{2\etabar}{\lambda L},
\end{equation*}
implying that
\begin{equation*}
\|\Delta\|_1 \le 8 \|\Delta_A\|_1 + \frac{2 \etabar}{\lambda L} \le 8 \sqrt{k} \|\Delta\|_2 + \frac{2\etabar}{\lambda L}.
\end{equation*}
In the case when $\|\Delta\|_2 \ge 3$, the RSC condition~\eqref{EqnLocalRSCSecond}
gives
\begin{align}
\label{EqnBells}
\alpha_2 \|\Delta\|_2 - \tau_2 \sqrt{\frac{\log p}{n}} \|\Delta\|_1
+ \rho_\lambda(\betastar + \Delta) - \rho_\lambda(\betastar) & \le \|\nabla \Loss_n(\betastar)\|_\infty \cdot \|\Delta\|_1 + \etabar \notag \\
& \le \frac{\lambda L}{8} \|\Delta\|_1 + \frac{\lambda L}{4} \|\Delta\|_1,
\end{align}
so
\begin{equation*}
\alpha_2 \|\Delta\|_2 \le \rho_\lambda(\betastar) - \rho_\lambda(\betastar + \Delta) + \left(\frac{3\lambda L}{8} + \tau_2 \sqrt{\frac{\log p}{n}}\right) \|\Delta\|_1.
\end{equation*}
In particular, if $\rho_\lambda(\betastar) - \rho_\lambda(\betastar + \Delta) \le 0$, we have
\begin{equation*}
\|\Delta\|_2 \le \frac{2R}{\alpha_2} \left(\frac{3 \lambda L}{8} + \tau_2 \sqrt{\frac{\log p}{n}}\right) < 3,
\end{equation*}
a contradiction. Hence, using Lemma~\ref{LemEll1Reg} in Appendix~\ref{AppGeneralProps}, we have
\begin{equation}
\label{EqnCarol}
0 \le \rho_\lambda(\betastar) - \rho_\lambda(\betastar + \Delta) \le \lambda L \|\Delta_A\|_1 - \lambda L \|\Delta_{A^c}\|_1.
\end{equation}
Note that under the scaling $\lambda L \ge 4\tau_2 \sqrt{\frac{\log p}{n}}$, the bound~\eqref{EqnBells} also implies~\eqref{EqnEtabarBd}. Combining~\eqref{EqnEtabarBd} and~\eqref{EqnCarol}, we then have
\begin{equation*}
  \|\Delta_{A^c}\|_1 - \|\Delta_A\|_1 \le \frac{1}{2} \|\Delta\|_1 +
  \frac{\etabar}{\lambda L} = \frac{1}{2} \|\Delta_{A^c}\|_1 +
  \frac{1}{2} \|\Delta_A\|_1 + \frac{\etabar}{\lambda L},
\end{equation*}
and consequently,
\begin{equation*}
\|\Delta_{A^c}\|_1 \le 3 \|\Delta_A\|_1 + \frac{2\etabar}{\lambda L},
\end{equation*}
so
\begin{equation*}
\|\Delta\|_1 \le 4 \|\Delta_A\|_1 + \frac{2\etabar}{\lambda L} \le
4\sqrt{k} \|\Delta\|_2 + \frac{2\etabar}{\lambda L}.
\end{equation*}
Using the trivial bound $\|\Delta\|_1 \le 2R$, we obtain the claim~\eqref{EqnConeInter}. \\

We now apply the implication~\eqref{EqnTol} to the vectors $\betahat$
and $\beta^t$. Note that by optimality of $\betahat$, we have
\begin{equation*}
	\phi(\betahat) \le \phi(\betastar),
\end{equation*}
and by the assumption~\eqref{EqnObjTol}, we also have
\begin{equation*}
\phi(\beta^t) \le \phi(\betahat) + \etabar \le \phi(\betastar) +
\etabar.
\end{equation*}
Hence,
\begin{align*}
\|\betahat - \betastar\|_1 & \le 8 \sqrt{k} \|\betahat - \betastar\|_2, \qquad
\text{and} \\
\|\beta^t - \betastar\|_1 & \le 8 \sqrt{k}
\|\beta^t - \betastar\|_2 + 2 \cdot \min\left(\frac{2\etabar}{\lambda
  L}, R\right).
\end{align*}
By the triangle inequality, we then have
\begin{align*}
 \|\beta^t - \betahat\|_1 & \le \|\betahat - \betastar\|_1 + \|\beta^t
 - \betastar\|_1 \\
& \le 8\sqrt{k} \cdot \Big(\|\betahat - \betastar\|_2 + \|\beta^t -
 \betastar\|_2\Big) + 2 \cdot \min\left(\frac{2\etabar}{\lambda L},
 R\right) \\
& \le 8\sqrt{k} \cdot \Big(2\|\betahat - \betastar\|_2 + \|\beta^t -
 \betahat\|_2 \Big) + 2 \cdot \min\left(\frac{2\etabar}{\lambda L},
 R\right),
\end{align*}
as claimed.

\subsection{Proof of Lemma~\ref{LemL2Assump}}
\label{AppL2Assump}

Our proof proceeds via induction on the iteration number $t$.  Note
that the base case $t = 0$ holds by assumption. Hence, it remains to
show that if $\|\betait{\iter} - \betahat\|_2 \leq 3$ for some integer
$t \geq 1$, then $\|\betait{\iter+1} - \betahat\|_2 \leq 3$, as well.
	
We assume for the sake of a contradiction that $\|\beta^{t+1} -
\betahat\|_2 > 3$. By the RSC condition~\eqref{EqnLocalRSCSecond} and
the relation~\eqref{EqnTCompare}, we have
\begin{equation}
  \label{EqnIterateZeroPrime}
  \scriptTBar(\beta^{t+1}, \betahat) \ge \alpha \|\betahat -
  \beta^{t+1}\|_2 - \tau \sqrt{\frac{\log p}{n}} \|\betahat -
  \beta^{t+1}\|_1 - \frac{\mu}{2} \|\betahat - \beta^{t+1}\|_2^2.
	\end{equation}
Furthermore, by convexity of $g \defn g_{\lambda, \mu}$, we have
\begin{equation}
  \label{EqnRhobarBd}
  g(\beta^{t+1}) - g(\betahat) - \inprod{\nabla g(\betahat)}{\beta^{t+1} - \betahat} \ge 0.
\end{equation}
Multiplying by $\lambda$ and summing with~\eqref{EqnIterateZeroPrime} then yields
\begin{align*}
& \phi(\beta^{t+1}) - \phi(\betahat) - \inprod{\nabla
    \phi(\betahat)}{\beta^{t+1} - \betahat} \\
& \qquad \ge \alpha
  \|\betahat - \beta^{t+1}\|_2 - \tau \sqrt{\frac{\log p}{n}}
  \|\betahat - \beta^{t+1}\|_1 - \frac{\mu}{2} \|\betahat -
  \beta^{t+1}\|_2^2.
\end{align*}
Together with the first-order optimality condition $\inprod{\nabla
  \phi(\betahat)}{\beta^{t+1} - \betahat} \ge 0$, we then have
\begin{align}
  \label{EqnL2Upper}
  \phi(\beta^{t+1}) - \phi(\betahat) & \ge \alpha \|\betahat - \beta^{t+1}\|_2 - \tau \sqrt{\frac{\log p}{n}} \|\betahat - \beta^{t+1}\|_1 - \frac{\mu}{2} \|\betahat - \beta^{t+1}\|_2^2.
\end{align}
	
Since $\|\betahat - \beta^t\|_2 \le 3$ by the induction
hypothesis, applying the RSC
condition~\eqref{EqnLocalRSCFirst} to the pair $(\betahat,
\beta^t)$ also gives
\begin{equation*}
  \Lossbar_n(\betahat) \ge \Lossbar_n(\beta^t) + \inprod{\nabla \Lossbar_n(\beta^t)}{\betahat - \beta^t} + \left(\alpha - \frac{\mu}{2}\right) \cdot \|\beta^t - \betahat\|_2^2 - \tau \frac{\log p}{n} \|\beta^t - \betahat\|_1^2.
\end{equation*}
Combining with the inequality
\begin{equation*}
  g(\betahat) \ge g(\beta^{t+1}) + \inprod{\nabla g(\beta^{t+1})}{\betahat - \beta^{t+1}},
\end{equation*}
we then have
\begin{align}
\label{EqnPhiBd}
\phi(\betahat) & \ge \Lossbar_n(\beta^t) + \inprod{\nabla
  \Lossbar_n(\beta^t)}{\betahat - \beta^t} + \lambda g(\beta^{t+1}) +
\lambda \inprod{\nabla g(\beta^{t+1})}{\betahat - \beta^{t+1}} \notag \\
& \qquad \qquad + \left(\alpha - \frac{\mu}{2}\right) \cdot \|\beta^t - \betahat\|_2^2 -
\tau \frac{\log p}{n} \|\beta^t - \betahat\|_1^2 \notag \\
& \ge \Lossbar_n(\beta^t) + \inprod{\nabla \Lossbar_n(\beta^t)}{\betahat - \beta^t} + \lambda g(\beta^{t+1}) \notag \\
& \qquad \qquad + \lambda \inprod{\nabla g(\beta^{t+1})}{\betahat - \beta^{t+1}} - \tau \frac{\log p}{n} \|\beta^t - \betahat\|_1^2.
\end{align}
Finally, the RSM condition~\eqref{EqnRSM} on the pair $(\beta^{t+1},
\beta^t)$ gives
\begin{align}
\label{EqnRSMExpand}
\phi(\beta^{t+1}) & \le \Lossbar_n(\beta^t) + \inprod{\nabla
  \Lossbar_n(\beta^t)}{\beta^{t+1} - \beta^t} + \lambda g(\beta^{t+1})
\\
& \qquad \qquad \qquad \qquad + \left(\alpha_3 - \frac{\mu}{2}\right) \|\beta^{t+1} -
\beta^t\|_2^2 + \tau \frac{\log p}{n} \|\beta^{t+1} - \beta^t\|_1^2
\notag \\
& \le \Lossbar_n(\beta^t) + \inprod{\nabla \Lossbar_n(\beta^t)}{\beta^{t+1} - \beta^t} + \lambda g(\beta^{t+1}) \notag \\
& \qquad \qquad \qquad \qquad + \frac{\eta}{2} \|\beta^{t+1} - \beta^t\|_2^2 + \frac{4R^2 \tau \log p}{n},
\end{align}
since $\frac{\eta}{2} \ge \alpha_3 - \frac{\mu}{2}$ by assumption, and
$\|\beta^{t+1} - \beta^t\|_1 \le 2R$. It is easy to check that the
update~\eqref{EqnCompGradUpdate} may be written equivalently as
\begin{equation*}
  \beta^{t+1} \in \arg \min_{g(\beta) \le R, \; \beta \in \Omega}
  \left\{\Lossbar_n(\beta^t) + \inprod{\nabla
    \Lossbar_n(\beta^t)}{\beta - \beta^t} + \frac{\eta}{2} \|\beta -
  \beta^t\|_2^2 + \lambda g(\beta)\right\},
\end{equation*}
and the optimality of $\beta^{t+1}$ then yields
\begin{equation}
  \label{EqnBetaT}
  \inprod{\nabla \Lossbar_n(\beta^t) + \eta (\beta^{t+1} - \beta^t) +
    \lambda \nabla g(\beta^{t+1})}{\beta^{t+1} - \betahat} \le 0.
\end{equation}
Summing up~\eqref{EqnPhiBd}, \eqref{EqnRSMExpand},
and~\eqref{EqnBetaT}, we then have
\begin{align*}
\phi(\beta^{t+1}) \!\! - \!\! \phi(\betahat) & \le \frac{\eta}{2} \|\beta^{t+1}
- \beta^t\|_2^2 + \eta \inprod{\beta^t - \beta^{t+1}}{\beta^{t+1} -
  \betahat} + \tau \frac{\log p}{n} \|\beta^t - \betahat\|_1^2 \notag \\
& \qquad \qquad \qquad \qquad \qquad \qquad \qquad \qquad \qquad +
\frac{4 R^2 \tau \log p}{n} \\
& = \frac{\eta}{2} \|\beta^t - \betahat\|_2^2 - \frac{\eta}{2}
\|\beta^{t+1} - \betahat\|_2^2 + \tau \frac{\log p}{n} \|\beta^t -
\betahat\|_1^2 + \!\! \frac{4 R^2 \tau \log p}{n}.
\end{align*}
Combining this last inequality with~\eqref{EqnL2Upper}, we have
\begin{align*}
\alpha \|\betahat - \beta^{t+1}\|_2 - & \tau \sqrt{\frac{\log p}{n}}
\|\betahat - \beta^{t+1}\|_1 \\
& \le \frac{\eta}{2} \|\beta^t -
\betahat\|_2^2 - \frac{\eta-\mu}{2} \|\beta^{t+1} -
\betahat\|_2^2 + \frac{8 R^2 \tau \log p}{n} \\
		& \le \frac{9\eta}{2} - \frac{3(\eta-\mu)}{2} \|\beta^{t+1} - \betahat\|_2 + \frac{8R^2 \tau \log p}{n},
\end{align*}
since $\|\beta^t - \betahat\|_2 \le 3$ by the induction hypothesis and
$\|\beta^{t+1} - \betahat\|_2 > 3$ by assumption, and using the fact
that $\eta \ge \mu$. It follows that
\begin{align*}
\left(\alpha + \frac{3(\eta - \mu)}{2}\right) \cdot \|\betahat -
\beta^{t+1}\|_2 & \le \frac{9\eta}{2} + \tau \sqrt{\frac{\log p}{n}}
\|\betahat - \beta^{t+1}\|_1 + \frac{8 R^2 \tau \log p}{n} \\
& \le \frac{9\eta}{2} + 2R\tau \sqrt{\frac{\log p}{n}} + \frac{8 R^2
  \tau \log p}{n} \\
& \le 3\left(\alpha + \frac{3(\eta - \mu)}{2}\right),
\end{align*}
where the final inequality holds whenever $2R\tau \sqrt{\frac{\log
    p}{n}} + \frac{8 R^2 \tau \log p}{n} \le 3\left(\alpha -
\frac{3\mu}{2}\right)$. Rearranging gives $\|\beta^{t+1} - \betahat\|_2 \le 3$,
providing the desired contradiction.


\subsection{Proof of Lemma~\ref{LemIterate}}
\label{AppLemIterate}

We begin with an auxiliary lemma:

\begin{mylemma}
\label{LemSunflowerSeed}
Under the conditions of Lemma~\ref{LemIterate}, we have
\begin{subequations}
\begin{align}
\label{EqnIterateTwo}
\scriptTBar(\betait{t}, \betahat) & \geq -2\tau \frac{\log
  p}{n}(\epsilon + \epsbar)^2, \quad \mbox{and} \\
\label{EqnIterateOne}
\phi(\beta^t) - \phi(\betahat) & \geq \frac{2\alpha - \mu}{4}
\|\betahat - \beta^t\|_2^2 - \frac{2 \tau \log p}{n} (\epsilon +
\epsbar)^2.
\end{align}
\end{subequations}
\end{mylemma}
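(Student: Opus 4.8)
The plan is to reduce both inequalities \eqref{EqnIterateTwo} and \eqref{EqnIterateOne} to a single lower bound on the modified Taylor error, namely the core estimate
\begin{equation}
\tag{$\dagger$}
\scriptTBar(\betait{t}, \betahat) \;\ge\; \frac{2\alpha - \mu}{4}\, \|\betait{t} - \betahat\|_2^2 \;-\; \frac{2\tau \log p}{n}(\epsilon + \epsbar)^2 .
\end{equation}
Once $(\dagger)$ is in hand, inequality \eqref{EqnIterateTwo} follows by discarding the nonnegative quadratic term (legitimate since $\mu < 2\alpha$), and \eqref{EqnIterateOne} follows by combining $(\dagger)$ with the elementary reduction $\phi(\betait{t}) - \phi(\betahat) \ge \scriptTBar(\betait{t}, \betahat)$ established below.

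To obtain the reduction, I would use that the objective of program~\eqref{EqnAlternative} is $\phi = \Lossbar_n + \lambda \SIDESPEC$, where $\SIDESPEC$ is convex by Assumption~\ref{AsRho}. Expanding $\Lossbar_n$ around $\betahat$ through the definition of $\scriptTBar$ and bounding $\SIDESPEC(\betait{t}) - \SIDESPEC(\betahat)$ below by its supporting hyperplane gives
\[
\phi(\betait{t}) - \phi(\betahat) \;\ge\; \inprod{\nabla \Lossbar_n(\betahat) + \lambda \nabla \SIDESPEC(\betahat)}{\betait{t} - \betahat} + \scriptTBar(\betait{t}, \betahat).
\]
The inner product equals $\inprod{\nabla \phi(\betahat)}{\betait{t} - \betahat}$, which is nonnegative by the first-order optimality of the global minimum $\betahat$ together with feasibility of $\betait{t}$, yielding $\phi(\betait{t}) - \phi(\betahat) \ge \scriptTBar(\betait{t}, \betahat)$ as claimed.

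The substantive part is establishing $(\dagger)$. First I would record that $\|\betait{t} - \betahat\|_2 \le 3$ by Lemma~\ref{LemL2Assump}, and that $\betahat \in \ball_2(3) \cap \ball_1(R)$ (feasibility gives $\betahat \in \ball_1(R)$, while the scaling of $\numobs$ forces $\|\betahat - \betastar\|_2 \le \tfrac12$, hence $\|\betahat\|_2 \le \tfrac32$). Thus $\betahat$ is an admissible base point for the local RSC bound~\eqref{EqnLocalRSCFirst}, giving $\scriptT(\betait{t}, \betahat) \ge \alpha_1 \|\betait{t} - \betahat\|_2^2 - \tau_1 \frac{\log p}{n}\|\betait{t} - \betahat\|_1^2$. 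Subtracting $\frac{\mu}{2}\|\cdot\|_2^2$ via~\eqref{EqnTCompare} and using $\alpha_1 \ge \alpha$ turns this into a lower bound on $\scriptTBar$ of the form $\frac{2\alpha - \mu}{2}\|\cdot\|_2^2 - \tau\frac{\log p}{n}\|\cdot\|_1^2$. I would then feed in the cone inequality of Lemma~\ref{LemICB}, which (using $\epsbar = 8\sqrt{k}\epsstat$ and $\epsilon = 2\min\{2\etabar/\lambda L, R\}$) reads $\|\betait{t} - \betahat\|_1 \le 8\sqrt{k}\|\betait{t} - \betahat\|_2 + 2\epsbar + \epsilon$, square it, and split the quadratic budget: retain $\frac{2\alpha-\mu}{4}\|\cdot\|_2^2$ and use the remaining $\frac{2\alpha-\mu}{4}\|\cdot\|_2^2$ to absorb the $k\|\cdot\|_2^2$ contribution of $\|\cdot\|_1^2$ under the sample-size condition $\frac{32 k \tau \log p}{n} \le \frac{2\alpha - \mu}{4}$; the leftover $(2\epsbar + \epsilon)^2$ term collapses into the additive error of order $\frac{\tau \log p}{n}(\epsilon + \epsbar)^2$.

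I expect the main obstacle to be exactly this last bookkeeping: one must carefully track the constant generated by expanding the squared cone inequality (the leading coefficient is $64k$, or $128k$ after an arithmetic-mean--geometric-mean split) and check it is dominated by the quadratic reserve permitted by the stated scaling, while collapsing the $(2\epsbar + \epsilon)^2$ remainder into the target $(\epsilon + \epsbar)^2$. This is routine inequality chasing once the two genuinely structural ingredients are secured, namely the decomposition $\phi = \Lossbar_n + \lambda\SIDESPEC$ with the optimality of $\betahat$, and the admissibility of $\betahat$ as a base point for local RSC, which rests on Lemmas~\ref{LemL2Assump} and~\ref{LemICB}.
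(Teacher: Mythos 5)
Your proposal assembles exactly the same ingredients as the paper's proof: the local RSC bound~\eqref{EqnLocalRSCFirst} applied with base point $\betahat$ (admissible since $\|\betahat\|_2 \le 3/2$ and $\|\beta^t - \betahat\|_2 \le 3$ by Lemma~\ref{LemL2Assump}), the relation~\eqref{EqnTCompare}, convexity of $\SIDESPEC$ plus first-order optimality of $\betahat$, and the squared cone inequality of Lemma~\ref{LemICB} together with the sample-size condition to absorb the $k\|\beta^t - \betahat\|_2^2$ contribution. For~\eqref{EqnIterateOne}, your ``reduce to $\scriptTBar$, then lower-bound $\scriptTBar$'' organization is just a repackaging of the paper's summation of its three displayed inequalities, and the constant bookkeeping you flag (a coefficient of at least $64k$, versus the stated scaling constant $32$) is slop that the paper shares: its proof silently replaces $32$ by a generic $c$.

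The one substantive divergence is~\eqref{EqnIterateTwo}. You prove the inequality exactly as printed, namely a lower bound on $\scriptTBar(\beta^t, \betahat)$, the Taylor error expanded \emph{around} $\betahat$, by discarding the quadratic term in your $(\dagger)$. The paper instead proves---and, in the proof of Lemma~\ref{LemIterate}, actually uses---the \emph{switched} inequality $\scriptTBar(\betahat, \beta^t) \ge -2\tau\frac{\log p}{n}(\epsilon+\epsbar)^2$, the expansion around $\beta^t$ evaluated at $\betahat$: it is invoked to control $\EmpLossBar(\beta^t) + \inprod{\nabla \EmpLossBar(\beta^t)}{\betahat - \beta^t} - \EmpLossBar(\betahat)$ in deriving~\eqref{EqnIterateThree}. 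Since $\EmpLossBar$ is nonconvex, the two Taylor errors are not interchangeable, and your $(\dagger)$ does not yield the switched version; obtaining it requires applying the RSC condition with $\beta^t$ playing the role of the base point $\beta_2$, as the paper does. So your argument is faithful to the lemma as stated (the statement/proof mismatch is the paper's own), but if it were substituted into the paper, the downstream proof of Lemma~\ref{LemIterate} would be missing the inequality it actually needs, and you would have to redo the $\scriptTBar$ bound with the roles of $\beta^t$ and $\betahat$ exchanged.
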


\noindent We prove this result later, taking it as given for the
moment. \\

Define
\begin{align*}
\phi_t(\beta) & \defn \EmpLossBar(\beta^t) + \inprod{\nabla
  \EmpLossBar(\beta^t)}{\beta - \beta^t} + \frac{\eta}{2} \|\beta -
\beta^t\|_2^2 + \lambda \SIDE(\beta),
\end{align*}
the objective function minimized over the
constraint set $\{\SIDE(\beta) \leq R\}$ at iteration $t$. For any
$\gamma \in [0,1]$, the vector $\beta_\gamma \defn \gamma \betahat +
(1-\gamma) \beta^t$ belongs to the constraint set, as well.
Consequently, by the optimality of $\betait{\iter+1}$ and feasibility
of $\beta_\gamma$, we have
\begin{align*}
\phi_t(\beta^{t+1}) \! \leq \phi_t(\beta_\gamma) & =
\EmpLossBar(\beta^t) \! + \! \inprod{\nabla \EmpLossBar(\beta^t)}{\gamma
  \betahat - \gamma \beta^t} \! + \! \frac{\eta \gamma^2}{2} \|\betahat -
\beta^t\|_2^2 + \lambda \SIDE(\beta_\gamma).
\end{align*}
Appealing to~\eqref{EqnIterateTwo}, we then have
\begin{align}
 \label{EqnIterateThree}
\phi_t(\beta^{t+1}) & \le (1-\gamma) \EmpLossBar(\beta^t) + \gamma
\EmpLossBar(\betahat) + 2\gamma \tau \frac{\log p}{n}(\epsilon +
\epsbar)^2  \notag \\
& \qquad \qquad \qquad \qquad + \frac{\eta \gamma^2}{2} \|\betahat - \beta^t\|_2^2 +
\lambda g(\beta_\gamma) \notag \\
& \stackrel{(i)}{\le} \phi(\beta^t) - \gamma(\phi(\beta^t) -
\phi(\betahat)) + 2\gamma \tau \frac{\log p}{n}(\epsilon + \epsbar)^2
+ \frac{\eta \gamma^2}{2} \|\betahat - \beta^t\|_2^2 \notag \\
& \le \phi(\beta^t) - \gamma (\phi(\beta^t) - \phi(\betahat)) + 2\tau
\frac{\log p}{n}(\epsilon + \epsbar)^2 + \frac{\eta \gamma^2}{2}
\|\betahat - \beta^t\|_2^2,
\end{align}
where inequality (i) incorporates the fact that
\begin{equation*}
	g(\beta_\gamma) \le \gamma g(\betahat) + (1-\gamma) g(\beta^t),
\end{equation*}
by the convexity of $g$.

By the RSM condition~\eqref{EqnRSM}, we also have
\begin{equation*}
\scriptTBar(\betait{\iter+1}, \betait{\iter}) \leq \frac{\eta}{2}
\|\beta^{t+1} - \beta^t\|_2^2 + \tau \frac{\log p}{n} \|\beta^{t+1} -
\beta^t\|_1^2,
\end{equation*}
since $\alpha_3 - \mu \le \frac{\eta}{2}$ by assumption, and adding
$\lambda g(\beta^{t+1})$ to both sides gives
\begin{align*}
\phi(\beta^{t+1}) & \le \EmpLossBar(\beta^t) + \inprod{\nabla
  \EmpLossBar(\beta^t)}{\beta^{t+1} - \beta^t} + \frac{\eta}{2}
\|\beta^{t+1} - \beta^t\|_2^2 \\
& \qquad \qquad + \tau \frac{\log p}{n} \|\beta^{t+1} -
\beta^t\|_1^2 + \lambda \SIDE(\beta^{t+1}) \\
& = \phi_t(\beta^{t+1}) + \tau \frac{\log p}{n} \|\beta^{t+1}
- \beta^t\|_1^2.
\end{align*}
Combining with~\eqref{EqnIterateThree} then yields
\begin{align}
\label{EqnOatmeal}
\phi(\beta^{t+1}) & \le \phi(\beta^t) - \gamma (\phi(\beta^t) -
\phi(\betahat)) + \frac{\eta \gamma^2}{2} \|\betahat - \beta^t\|_2^2 \notag \\
& \qquad +
\tau \frac{\log p}{n} \|\beta^{t+1} - \beta^t\|_1^2 + 2\tau \frac{\log
  p}{n} (\epsilon + \epsbar)^2.
\end{align}

By the triangle inequality, we have
\begin{align*}
\|\betait{\iter+1} - \betait{\iter}\|_1^2 & \leq
\big(\|\Delta^{\iter+1}\|_1 + \|\Delta^{\iter}\|_1\big)^2 \; \leq \; 2
\|\Delta^{\iter+1}\|^2_1 + 2 \|\Delta^{\iter}\|^2_1,
\end{align*}
where we have defined $\Delta^t \defn \beta^t - \betahat$. Combined
with~\eqref{EqnOatmeal}, we therefore have
\begin{multline*}
\phi(\beta^{t+1}) \leq \phi(\beta^t) - \gamma (\phi(\beta^t) -
\phi(\betahat)) + \frac{\eta \gamma^2}{2} \|\Delta^t\|_2^2 \\ + 2 \tau
\frac{\log p}{n}(\|\Delta^{t+1}\|_1^2 + \|\Delta^t\|_1^2) + 2 \SQUIRREL,
\end{multline*}
where $\SQUIRREL \defn \tau \frac{\log \pdim}{\numobs} (\epsilon +
\epsbar)^2$. Then applying Lemma~\ref{LemICB} to bound the
$\ell_1$-norms, we have
\begin{align}
\phi(\beta^{t+1}) & \leq \phi(\beta^t) - \gamma (\phi(\beta^t) -
\phi(\betahat)) + \frac{\eta \gamma^2}{2} \|\Delta^t\|_2^2 \notag \\
& \qquad \qquad \qquad + ck \tau
\frac{\log p}{n}(\|\Delta^{t+1}\|_2^2 + \|\Delta^t\|_2^2) + c'
\SQUIRREL \nonumber \\
& = \phi(\beta^t) - \gamma(\phi(\beta^t) - \phi(\betahat)) +
\left(\frac{\eta \gamma^2}{2} + ck\tau \frac{\log p}{n}\right)
\|\Delta^t\|_2^2 \nonumber \\
\label{EqnChickaDee}
& \qquad \qquad \qquad
\qquad \qquad \qquad + ck\tau
\frac{\log p}{n} \|\Delta^{t+1}\|_2^2 + c'\SQUIRREL.
\end{align}

Now introduce the shorthand $\delta_t \defn \phi(\beta^t) -
\phi(\betahat)$ and $\CHICKA = k \tau \frac{\log \pdim}{\numobs}$. By
applying~\eqref{EqnIterateOne} and subtracting
$\phi(\betahat)$ from both sides of~\eqref{EqnChickaDee},
we have
\begin{multline*}
\delta_{t+1} \leq \big (1 - \gamma \big) \delta_t + \frac{\eta
  \gamma^2 + c \CHICKA}{\alpha - \mupar/2} \left(\delta_t +
2\SQUIRREL\right) \\ + \frac{c \CHICKA }{\alpha - \mupar/2}
\left(\delta_{t+1} + 2\SQUIRREL\right) + c' \SQUIRREL.
\end{multline*}
Choosing $\gamma = \frac{2\alpha - \mupar}{4\eta} \in (0,1)$ yields
\begin{multline*}
\left(1 - \frac{c \CHICKA}{\alpha - \mupar/2} \right) \delta_{t+1}
\leq \left(1 - \frac{2\alpha - \mupar}{8\eta} + \frac{c
  \CHICKA}{\alpha - \mupar/2} \right) \delta_t \\
+ 2 \, \left(\frac{2\alpha - \mupar} {8\eta} + \frac{2c \CHICKA
}{\alpha - \mupar/2} + c' \right) \SQUIRREL,
\end{multline*}
or $\delta_{t+1} \le \kappa \delta_t + \xi (\epsilon + \epsbar)^2$,
where $\kappa$ and $\xi$ were previously defined in~\eqref{EqnKappa} and~\eqref{EqnXi}, respectively.  Finally,
iterating the procedure yields
\begin{align}
  \delta_t \le \kappa^{t-T} \delta_T + \xi (\epsilon + \epsbar)^2 (1 +
  \kappa + \kappa^2 + \cdots + \kappa^{t-T-1}) \; \le \; \kappa^{t-T}
  \delta_T + \frac{\xi (\epsilon + \epsbar)^2}{1-\kappa},
\end{align}
as claimed. \\

The only remaining step is to prove the auxiliary lemma. \\

\textbf{Proof of Lemma~\ref{LemSunflowerSeed}:} By the RSC condition~\eqref{EqnLocalRSCFirst} and the
assumption~\eqref{EqnL2Assump}, we have
\begin{align}
\label{EqnIterateZero}
\scriptTBar(\beta^t, \betahat) & \geq \left(\alpha - \frac{\mupar}{2} \right) \,
\|\betahat - \beta^t\|_2^2 - \tau \frac{\log p}{n} \|\betahat -
\beta^t\|_1^2.
\end{align}
Furthermore, by convexity of $g$, we have
\begin{align}
\label{EqnRhobarBd2}
\lambda\Big(\SIDE(\beta^t) - \SIDE(\betahat) - \inprod{\nabla
  \SIDE(\betahat)}{\beta^t - \betahat}\Big) & \geq 0,
\end{align}
and the first-order optimality condition for $\betahat$ gives
\begin{equation}
\label{EqnPhiOpt2}
\inprod{\nabla \phi(\betahat)}{\beta^t - \betahat} \ge 0.
\end{equation}
Summing~\eqref{EqnIterateZero}, \eqref{EqnRhobarBd2},
and~\eqref{EqnPhiOpt2} then yields
\begin{align*}
\phi(\beta^t) - \phi(\betahat) & \geq \left(\alpha - \frac{\mupar}{2} \right) \,
\|\betahat - \beta^t\|_2^2 - \tau \frac{\log p}{n}\|\betahat -
\beta^t\|_1^2.
\end{align*}
Applying Lemma~\ref{LemICB} to bound the term $\|\betahat -
\beta^t\|_1^2$ and using the assumption $\frac{ck \tau \log p}{n}
\le \frac{2\alpha - \mu}{4}$ yields the bound~\eqref{EqnIterateOne}.
On the other hand, applying Lemma~\ref{LemICB} directly to~\eqref{EqnIterateZero} with $\beta^t$ and $\betahat$
switched gives
\begin{align*}
\scriptTBar(\betahat, \beta^t) & \ge \left(\alpha - \frac{\mu}{2}\right) \|\betahat -
\beta^t\|_2^2 - \tau \frac{\log p}{n}\left(c k \|\betahat -
\beta^t\|_2^2 + 2(\epsilon + \epsbar)^2\right) \notag \\
& \ge -2\tau \frac{\log p}{n}(\epsilon + \epsbar)^2.
\end{align*}
This establishes~\eqref{EqnIterateTwo}.


\section{Verifying RSC/RSM Conditions}
\label{AppGLM}

In this Appendix, we provide a proof of Proposition~\ref{PropGLM}, which verifies the RSC~\eqref{EqnRSC2} and RSM~\eqref{EqnRSM} conditions for GLMs.

\subsection{Main Argument}
\label{SecVerify}

Using the notation for GLMs in Section~\ref{SecGLM}, we introduce
the shorthand \mbox{$\Delta \defn \beta_1 - \beta_2$} and observe
that, by the mean value theorem, we have
\begin{equation}
\label{EqnTexpand}
\scriptT(\beta_1, \beta_2) = \frac{1}{\numobs} \sum_{i=1}^\numobs
\psi''\big(\inprod{\beta_1}{x_i}) + t_i \inprod{\Delta}{x_i} \big) \,
(\inprod{\Delta}{x_i})^2,
\end{equation}
for some $t_i \in [0,1]$. The $t_i$'s are i.i.d. random variables,
with each $t_i$ depending only on the random vector $x_i$. \\

\textbf{Proof of bound~\eqref{EqnRSMglm}:} The proof of this upper bound is relatively straightforward given
earlier results~\citep{LohWai12}.  From the Taylor series
expansion~\eqref{EqnTexpand} and the boundedness assumption
$\|\psi''\|_\infty \leq \alpha_u$, we have
\begin{align*}
\scriptT(\beta_1, \beta_2) & \leq \alpha_u \cdot \frac{1}{\numobs}
\sum_{i=1}^\numobs \big(\inprod{\Delta}{x_i} \big)^2.
\end{align*}
By known results on restricted eigenvalues for ordinary linear
regression (cf.\ Lemma 13 in~\cite{LohWai11a}), we also have
\begin{equation*}
\frac{1}{\numobs} \sum_{i=1}^n (\inprod{\Delta}{x_i})^2 \le
\lambda_{\max}(\CovX) \left( \frac{3}{2} \|\Delta\|_2^2 + \frac{\log
  p}{n}\|\Delta\|_1^2\right),
\end{equation*}
with probability at least $1 - c_1 \exp(-c_2 n)$. Combining the two
inequalities yields the desired result. \\

\textbf{Proof of bounds~\eqref{EqnRSCglm}:}
The proof of the RSC bound is much more involved, and we provide only
high-level details here, deferring the bulk of the technical analysis
to later in the appendix.  We define
\begin{align*}
\alpha_\ell \defn \left(\inf_{|t| \le 2T} \psi''(t)\right) \,
\frac{\lambda_{\min}(\CovX)}{8},
\end{align*}
where $T$ is a suitably chosen constant depending only on
$\lambda_{\min}(\CovX)$ and the sub-Gaussian parameter $\sigma_x$. (In
particular, see~\eqref{EqnChooseTau} below, and take $T =
3\tau$.) The core of the proof is based on the following lemma,
proved in Section~\ref{AppTaylorGLM}:

\begin{mylemma}
\label{LemTaylorGLM}
With probability at least $1 - c_1 \exp(-c_2 n)$, we have
\begin{align*}
  \scriptT(\beta_1, \beta_2) & \geq \alpha_\ell \|\Delta\|_2^2 - c
  \sigma_x \|\Delta\|_1 \|\Delta\|_2 \sqrt{\frac{\log p}{n}},
\end{align*}
uniformly over all pairs $(\beta_1, \beta_2)$ such that $\beta_2 \in
\ball_2(3) \cap \ball_1(R)$, $\|\beta_1 - \beta_2\|_2 \le 3$, and
\begin{equation}
  \label{EqnDeltaBd}
  \frac{\|\Delta\|_1}{\|\Delta\|_2} \le \frac{\alpha_\ell}{c
    \sigma_x}\sqrt{\frac{n}{\log p}}.
\end{equation}
\end{mylemma}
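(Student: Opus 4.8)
The plan is to pass from the mean-value representation~\eqref{EqnTexpand} to a \emph{truncated quadratic} empirical process whose expectation is controlled from below by $\lambda_{\min}(\CovX)$, and whose fluctuations around that expectation are controlled uniformly over the stated set. First I would discard the curvature outside a compact window. Since $\psi$ is convex we have $\psi'' \ge 0$ everywhere, and $\psi''(u) \ge \inf_{|s|\le 2T}\psi''(s)$ for $|u|\le 2T$, where $\inf_{|s|\le 2T}\psi''(s)$ is exactly the positive constant in the definition of $\alpha_\ell$. Writing the argument of $\psi''$ in~\eqref{EqnTexpand} as $\inprod{\beta_2}{x_i} + (1+t_i)\inprod{\Delta}{x_i}$ with $1+t_i\in[1,2]$, the event $\{|\inprod{\beta_2}{x_i}|\le T\}\cap\{|\inprod{\Delta}{x_i}|\le T/2\}$ forces this argument into $[-2T,2T]$, so
\begin{equation*}
\scriptT(\beta_1,\beta_2)\;\ge\;\Big(\inf_{|s|\le 2T}\psi''(s)\Big)\,\frac{1}{\numobs}\sum_{i=1}^\numobs(\inprod{x_i}{\Delta})^2\,\Ind\{|\inprod{x_i}{\beta_2}|\le T\}\,\Ind\{|\inprod{x_i}{\Delta}|\le T/2\},
\end{equation*}
which removes the inconvenient dependence on the unknown $t_i$.

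Next I would replace the non-Lipschitz indicator by a smooth surrogate amenable to symmetrization. Define the truncated quadratic $\chi_T(u)\defn u^2$ for $|u|\le T/4$, $\chi_T(u)\defn(T/2-|u|)^2$ for $T/4<|u|\le T/2$, and $\chi_T(u)\defn 0$ for $|u|>T/2$; this function is continuous, $T$-Lipschitz, bounded by $T^2/16$, and satisfies $\chi_T(u)\le u^2\Ind\{|u|\le T/2\}$ pointwise. The crux is then a uniform lower bound on
\begin{equation*}
\MartinPingPong(\beta_2,\Delta)\defn\frac{1}{\numobs}\sum_{i=1}^\numobs\chi_T(\inprod{x_i}{\Delta})\,\Ind\{|\inprod{x_i}{\beta_2}|\le T\}
\end{equation*}
over $\beta_2\in\ball_2(3)\cap\ball_1(R)$ and $\Delta$ in the cone~\eqref{EqnDeltaBd} with $\|\Delta\|_2\le 3$, which I would control in two pieces. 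For the mean, choosing $T$ a sufficiently large multiple of $\sigma_x$ (this is the role of $T=3\tau$ in the definition of $\alpha_\ell$) and using sub-Gaussianity of $\inprod{x_i}{\beta_2}$ (parameter $\le 3\sigma_x$ since $\|\beta_2\|_2\le 3$) and of $\inprod{x_i}{\Delta}$, truncation costs only a constant factor: $\E[\chi_T(\inprod{x_i}{\Delta})\Ind\{|\inprod{x_i}{\beta_2}|\le T\}]\ge\tfrac14\,\Delta^\top\CovX\Delta\ge\tfrac14\lambda_{\min}(\CovX)\|\Delta\|_2^2$, which after multiplying by $\inf_{|s|\le 2T}\psi''(s)$ reproduces the leading term $\alpha_\ell\|\Delta\|_2^2$ (the factor $1/8$ in $\alpha_\ell$ absorbing this constant loss).

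For the fluctuations I would bound $\sup|\MartinPingPong(\beta_2,\Delta)-\E\MartinPingPong(\beta_2,\Delta)|$ by a standard symmetrization step followed by the Ledoux--Talagrand contraction inequality. Because $u\mapsto\chi_T(u)$ is $T$-Lipschitz and the multiplier $\Ind\{|\inprod{x_i}{\beta_2}|\le T\}$ is bounded, the symmetrized process is dominated by $cT\,\E\big[\sup\tfrac1\numobs\sum_i\rade{i}\inprod{x_i}{\Delta}\big]\le cT\,\big\|\tfrac1\numobs\sum_i\rade{i}x_i\big\|_\infty\,\|\Delta\|_1$, and the sub-Gaussian $\ell_\infty$-bound $\big\|\tfrac1\numobs\sum_i\rade{i}x_i\big\|_\infty\precsim\sigma_x\sqrt{\log\pdim/\numobs}$ holds with probability at least $1-c_1\exp(-c_2\numobs)$. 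A peeling argument over dyadic shells $\|\Delta\|_2\in[2^j,2^{j+1}]$, combined with Talagrand concentration to upgrade the expected sup to a high-probability statement, supplies the missing factor $\|\Delta\|_2$ and yields the uniform deviation $c\sigma_x\|\Delta\|_1\|\Delta\|_2\sqrt{\log\pdim/\numobs}$. Combining the two pieces gives the claimed bound uniformly over the stated set, with the cone condition~\eqref{EqnDeltaBd} ensuring we remain in the regime where the truncated expectation dominates.

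The main obstacle is this fluctuation step: obtaining a uniform bound over the high-dimensional, non-compact index set of pairs $(\beta_2,\Delta)$ at the sharp $\sqrt{\log\pdim/\numobs}$ rate while simultaneously (i) preserving the correct bilinear $\|\Delta\|_1\|\Delta\|_2$ dependence---which forces the shell/peeling decomposition rather than a single contraction bound---and (ii) choosing the truncation level $T$ large enough that the truncated expectation retains a constant fraction of $\lambda_{\min}(\CovX)\|\Delta\|_2^2$ yet small enough that $\inf_{|s|\le 2T}\psi''(s)$ stays a fixed positive constant. Balancing these two competing demands on $T$, and checking the sub-Gaussian tail computations that make both the mean bound and the discarding of the $\beta_2$-indicator lose only constants, is the technical heart of the argument and is where I expect the bulk of the deferred work in Section~\ref{AppTaylorGLM} to lie.
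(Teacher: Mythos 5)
Your overall architecture---restrict the curvature of $\psi''$ to a compact window, replace the hard truncation of the quadratic by a Lipschitz surrogate, lower-bound the expectation of the truncated process, and make the fluctuation bound uniform by symmetrization plus peeling---is the same as the paper's, and your treatment of the mean (Cauchy--Schwarz plus sub-Gaussian tails, with the truncation level a large multiple of $\sigma_x$ so that only a constant fraction of $\lambda_{\min}(\CovX)\|\Delta\|_2^2$ is lost) matches the computation leading to~\eqref{EqnMeanBd}. The gap is in the fluctuation step, which you rightly identify as the heart of the proof. Your process keeps the hard indicator $\Ind\{|\inprod{x_i}{\beta_2}| \le T\}$ as a multiplier, and this multiplier depends on $\beta_2$, one of the variables over which the supremum is taken. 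The Ledoux--Talagrand contraction principle applies to $\sup_t |\sum_i \varepsilon_i \phi(f_t(x_i))|$ for a \emph{fixed} contraction $\phi$; it cannot pass through a bounded multiplier that itself varies with the supremum index, and the product $\chi_T(\inprod{x_i}{\Delta})\cdot\Ind\{|\inprod{x_i}{\beta_2}|\le T\}$ is not a contraction of any single linear form. A symptom of the problem is that your final deviation bound carries no dependence on $R$ at all, whereas the supremum over $\beta_2 \in \ball_1(R)$ must be paid for: in the paper it appears as the $R$-dependent term in~\eqref{EqnComp4}, which is only absorbed because $R\sqrt{\log p / n} \precsim 1$. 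This is precisely why the paper smooths \emph{both} truncations---the quadratic by $\varphi$ and the $\beta$-indicator by the trapezoid $\gamma_T$---and then, after symmetrization (Lemma~\ref{LemSymmetrization}) and Rademacher-to-Gaussian comparison (Lemma~\ref{LemRadGauss}), invokes the Sudakov--Fernique inequality (Lemma~\ref{LemGaussComp}): because each factor is bounded and Lipschitz, the variance increments of the product process split into a $\beta$-increment plus a $\Delta$-increment, so the whole process is dominated by a sum of two \emph{linear} Gaussian processes, one indexed by $\beta$ (costing $R\sigma_x\sqrt{\log p/n}$) and one by $\Delta$ (costing $\delta \sigma_x \sqrt{\log p /n}$). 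Salvaging your indicator version would require genuinely different machinery (e.g., VC/bracketing entropy for the half-space class), not contraction.

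The second gap is the bilinear factor $\|\Delta\|_1\|\Delta\|_2$. With the truncation level of $\chi_T$ fixed at a constant $T$, both its Lipschitz constant and its envelope are constants, so any per-shell deviation bound is of order $T\sigma_x\|\Delta\|_1\sqrt{\log p/n}$ and does \emph{not} shrink as the shell radius $\|\Delta\|_2 \sim 2^{j}$ shrinks; on shells with $\|\Delta\|_2 \ll 1$ this exceeds the target $\sigma_x\|\Delta\|_1\|\Delta\|_2\sqrt{\log p/n}$ by the unbounded factor $T/2^{j}$, and no union bound repairs this (there are also infinitely many shells as $\|\Delta\|_2 \downarrow 0$). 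Peeling cannot manufacture a factor that the single-shell bound does not already contain. The paper's fix is to let the truncation level scale with the argument: it uses $\varphi_{\tau'\|\Delta\|_2}$, whose Lipschitz constant is $\tau'\|\Delta\|_2$ (still compatible with the curvature window since $\|\Delta\|_2 \le 3$ forces the argument of $\psi''$ into $[-2T,2T]$ when $T \ge 3\tau'$), normalizes the process by $\|\Delta\|_2^{-2}$ via the homogeneity property $c^{-2}\varphi_{ct}(cu) = \varphi_t(u)$, and then peels over the \emph{ratio} $\|\Delta\|_1/\|\Delta\|_2$, which lives in the bounded range~\eqref{EqnMartinIncompetentOutlaw} and hence requires only logarithmically many shells. (A minor point: the high-probability upgrade needs only the bounded differences inequality, as in~\eqref{EqnAzumaHoeff}, since the normalized summands are bounded by a constant; Talagrand concentration is not required.) With these two repairs---Lipschitz trapezoid plus Gaussian comparison in place of contraction, and $\|\Delta\|_2$-scaled truncation with ratio-peeling in place of fixed-$T$ shell-peeling---your outline becomes the paper's argument.
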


Taking Lemma~\ref{LemTaylorGLM} as given, we now complete the proof of
the RSC condition~\eqref{EqnRSCglm}.  By the arithmetic mean-geometric
mean inequality, we have
\begin{align*}
  c \sigma_x \|\Delta\|_1 \|\Delta\|_2 \sqrt{\frac{\log p}{n}} & \le
  \frac{\alpha_\ell}{2} \|\Delta\|_2^2 + \frac{c^2
    \sigma_x^2}{2\alpha_\ell} \frac{\log p}{n} \|\Delta\|_1^2,
\end{align*}
so Lemma~\ref{LemTaylorGLM} implies that~\eqref{EqnGLMFirst} holds uniformly over all pairs
$(\beta_1, \beta_2)$ such that $\beta_2 \in \ball_2(3) \cap
\ball_1(R)$ and $\|\beta_1 - \beta_2\|_2 \le 3$, whenever the
bound~\eqref{EqnDeltaBd} holds.  On the other hand, if the
bound~\eqref{EqnDeltaBd} does not hold, then the lower bound in~\eqref{EqnGLMFirst} is negative. By convexity of
$\EmpLoss$, we have $\scriptT(\beta_1, \beta_2) \ge 0$, so~\eqref{EqnGLMFirst} holds trivially in that case.

We now show that~\eqref{EqnGLMSecond} holds: in particular,
consider a pair $(\beta_1, \beta_2)$ with $\beta_2 \in \ball_2(3)$ and
$\|\beta_1 - \beta_2\|_2 \ge 3$. For any $t \in [0,1]$, the convexity
of $\EmpLoss$ implies that
\begin{align*}
\EmpLoss(\beta_2 + t \Delta) & \leq t \EmpLoss(\beta_2 + \Delta) +
(1-t) \EmpLoss(\beta_2),
\end{align*}
where $\Delta \defn \beta_1 - \beta_2$.  Rearranging yields
\begin{equation*}
	\EmpLoss(\beta_2 + \Delta) - \EmpLoss(\beta_2) \ge
        \frac{\EmpLoss(\beta_2 + t\Delta) - \EmpLoss(\beta_2)}{t},
\end{equation*}
so
\begin{equation}
\label{EqnTaylorIneq}
\scriptT(\beta_2 + \Delta, \beta_2) \ge \frac{\scriptT(\beta_2 +
  t\Delta, \beta_2)}{t}.
\end{equation}
Now choose $t = \frac{3}{\|\Delta\|_2} \in [0,1]$ so that
$\|t\Delta\|_2 = 1$.  Introducing the shorthand $\alpha_1 \defn
\frac{\alpha_\ell}{2}$ and $\tau_1 \defn \frac{c^2
  \sigma_x^2}{2\alpha_\ell}$, we may apply~\eqref{EqnGLMFirst} to obtain
\begin{align}
\label{EqnPadawan}
\frac{\scriptT(\beta_2 + t\Delta, \beta_2)}{t} & \ge
\frac{\|\Delta\|_2}{3} \left(\alpha_1 \left(\frac{3
  \|\Delta\|_2}{\|\Delta\|_2}\right)^2 - \tau_1 \frac{\log p}{n}
\left(\frac{3 \|\Delta\|_1}{\|\Delta\|_2}\right)^2\right) \notag \\
& = 3 \alpha_1
\|\Delta\|_2 - 9\tau_1 \frac{\log p}{n}
\frac{\|\Delta\|_1^2}{\|\Delta\|_2}.
\end{align}
Note that~\eqref{EqnGLMSecond} holds trivially unless
$\frac{\|\Delta\|_1}{\|\Delta\|_2} \leq \frac{\alpha_\ell}{2 c
  \sigma_x} \sqrt{\frac{\numobs}{\log \pdim}}$, due to the convexity
of $\EmpLoss$. In that case, \eqref{EqnTaylorIneq}
and~\eqref{EqnPadawan} together imply
\begin{align*}
\scriptT(\beta_2 + \Delta, \beta_2) & \geq 3 \alpha_1 \|\Delta\|_2 -
\frac{9 \tau_1 \, \alpha_\ell}{2 c \sigma_x} \sqrt{\frac{\log p}{n}}
\|\Delta\|_1,
\end{align*}
which is exactly the bound~\eqref{EqnGLMSecond}.

\subsection{Proof of Lemma~\ref{LemTaylorGLM}}
\label{AppTaylorGLM}

\newcommand{\taubar}{\ensuremath{\tau'}}

For a truncation level $\taubar > 0$ to be chosen, define the functions
\begin{equation*}
  \varphi_{\taubar}(u) =
	\begin{cases}
	  u^2, & \text{if } |u| \le \frac{\taubar}{2}, \\
	  (\taubar - u)^2, & \text{if } \frac{\taubar}{2} \le |u| \le \taubar, \\
	  0, & \text{if } |u| \ge \taubar.
	\end{cases}
\end{equation*}
By construction, $\varphi_{\taubar}$ is $\taubar$-Lipschitz and
\begin{equation}
\label{EqnPhiProps}
	\varphi_{\taubar}(u) \le u^2 \cdot \Ind\{|u| \le \taubar\}, \quad
        \mbox{for all $u \in \real$.}
\end{equation}
In addition, we define the trapezoidal function
\begin{equation*}
\gamma_\taubar(u) =
\begin{cases}
    1, & \text{if } |u| \le \frac{\taubar}{2}, \\
    2 - \frac{2}{\taubar} |u|, & \text{if } \frac{\taubar}{2} \le |u| \le \taubar, \\
		0, & \text{if } |u| \ge \taubar,
\end{cases}
\end{equation*}
and note that $\gamma_\taubar$ is $\frac{2}{\taubar}$-Lipschitz and
$\gamma_\taubar(u) \le \Ind\{|u| \le \taubar\}$.

Taking $T \ge 3\taubar$ so that $T \ge \taubar\|\Delta\|_2$ (since $\|\Delta\|_2 \le 3$ by assumption), and defining
\begin{equation*}
L_\psi(T) \defn \inf_{|u| \le 2T} \psi''(u),
\end{equation*}
we have the following inequality:
\begin{align}
\label{EqnTaylorBd1}
\scriptT(\beta + \Delta, \beta) & = \frac{1}{n} \sum_{i=1}^n
\psi''(x_i^T \beta + t_i \cdot x_i^T \Delta) \cdot(x_i^T \Delta)^2
\notag \\
& \ge L_{\psi}(T) \cdot \sum_{i=1}^n (x_i^T \Delta)^2 \cdot \Ind\{|x_i^T \Delta| \le \taubar \|\Delta\|_2\} \cdot \Ind\{|x_i^T \beta| \le T\} \notag \\
& \ge L_\psi(T) \cdot \frac{1}{n} \sum_{i=1}^n \varphi_{\taubar \|\Delta\|_2} (x_i^T \Delta) \cdot \gamma_T(x_i^T \beta),
\end{align}
where the first equality is the expansion~\eqref{EqnTexpand} and the
second inequality uses the bound~\eqref{EqnPhiProps}.

Now define the subset of $\real^\pdim \times \real^\pdim$ via
\begin{equation*}
\Aregion_\delta \defn \left\{(\beta, \Delta): \beta \in \ball_2(3)
\cap \ball_1(R), \; \Delta \in \ball_2(3), \;
\frac{\|\Delta\|_1}{\|\Delta\|_2} \le \delta\right\},
\end{equation*}
as well as the random variable
\begin{align*}
Z(\delta) & \defn \sup_{(\beta, \Delta) \in \Aregion_\delta}
\frac{1}{\|\Delta\|_2^2}\left|\frac{1}{n} \sum_{i=1}^n \varphi_{\taubar
  \|\Delta\|_2} (x_i^T \Delta) \cdot \gamma_T(x_i^T \beta) -
\E\left[\varphi_{\taubar \|\Delta\|_2} (x_i^T \Delta) \, \gamma_T(x_i^T
  \beta)\right]\right|.
\end{align*}
For any pair $(\beta, \Delta) \in \Aregion_\delta$, we have
\begin{align*}
	\E\big[(x_i^T \Delta)^2 & - \varphi_{\taubar \|\Delta\|_2} (x_i^T \Delta) \cdot \gamma_T(x_i^T \beta)\big] \\
	& \le \E\left[(x_i^T \Delta)^2 \Ind\left\{|x_i^T \Delta| \ge \frac{\taubar \|\Delta\|_2}{2}\right\}\right] + \E\left[(x_i^T \Delta)^2 \Ind\left\{|x_i^T \beta| \ge \frac{T}{2}\right\}\right] \\
	& \le \sqrt{\E\left[(x_i^T \Delta)^4\right]} \cdot \left(\sqrt{\mprob\left(|x_i^T \Delta| \ge \frac{\taubar \|\Delta\|_2}{2}\right)} + \sqrt{\mprob\left(|x_i^T \beta| \ge \frac{T}{2}\right)}\right) \\
	& \le \sigma_x^2 \|\Delta\|_2^2 \cdot c \exp\left(-\frac{c' \tau^{'2}}{\sigma_x^2}\right),
\end{align*}
where we have used Cauchy-Schwarz and a tail bound for sub-Gaussians, assuming $\beta \in \ball_2(3)$. It follows that for $\taubar$ chosen such that
\begin{equation}
	\label{EqnChooseTau}
	c\sigma_x^2 \exp\left(-\frac{c'\tau^{'2}}{\sigma_x^2}\right) = \frac{\lambda_{\min}\left(\E[x_i x_i^T]\right)}{2},
\end{equation}
we have the lower bound
\begin{equation}
	\label{EqnMeanBd}
	\E\left[\varphi_{\taubar \|\Delta\|_2}(x_i^T \Delta) \cdot \gamma_T(x_i^T \beta)\right] \ge \frac{\lambda_{\min}\left(\E[x_i x_i^T]\right)}{2} \cdot \|\Delta\|_2^2.
\end{equation}

By construction of $\varphi$, each summand in the expression for $Z(\delta)$ is sandwiched as
\begin{equation*}
	0 \le \frac{1}{\|\Delta\|_2^2} \cdot \varphi_{\taubar \|\Delta\|_2} (x_i^T \Delta) \cdot \gamma_T(x_i^T \beta) \le \frac{\tau^{'2}}{4}.
	\end{equation*}
Consequently, applying the bounded differences inequality yields
\begin{equation}
	\label{EqnAzumaHoeff}
	\mprob\left(Z(\delta) \ge \E[Z(\delta)] + \frac{\lambda_{\min}\left(\E[x_i x_i^T]\right)}{4}\right) \le c_1 \exp(-c_2 n).
\end{equation}
	
Furthermore, by Lemmas~\ref{LemSymmetrization} and~\ref{LemRadGauss}
in Appendix~\ref{AppConcentrate}, we have
\begin{equation}
  \label{EqnComp0}
  \E[Z(\delta)] \le 2\sqrt{\frac{\pi}{2}} \cdot
  \E\left[\sup_{(\beta, \Delta) \in \Aregion_\delta}
    \frac{1}{\|\Delta\|_2^2}\left|\frac{1}{n} \sum_{i=1}^n g_i
    \Big(\varphi_{\taubar \|\Delta\|_2} (x_i^T \Delta) \cdot
    \gamma_T(x_i^T \beta)\Big)\right|\right],
\end{equation}
where the $g_i$'s are i.i.d.\ standard Gaussians. Conditioned on
$\{x_i\}_{i=1}^n$, define the Gaussian processes
\begin{equation*}
  Z_{\beta, \Delta} \defn \frac{1}{\|\Delta\|_2^2} \cdot
  \frac{1}{n} \sum_{i=1}^n g_i \Big(\varphi_{\taubar \|\Delta\|_2}
  (x_i^T \Delta) \cdot \gamma_T(x_i^T \beta)\Big),
\end{equation*}
and note that for pairs $(\beta, \Delta)$ and $(\betatil, \Deltatil)$, we have
\begin{align*}
  \var\left(Z_{\beta, \Delta} - Z_{\betatil, \Deltatil}\right) &
  \le 2 \var\left(Z_{\beta, \Delta} - Z_{\betatil,
    \Delta}\right) + 2 \var\left(Z_{\betatil, \Delta} -
  Z_{\betatil, \Deltatil}\right),
\end{align*}
with
\begin{align*}
\var\left(Z_{\beta, \Delta} - Z_{\betatil, \Delta}\right) & =
\frac{1}{\|\Delta\|_2^4} \cdot \frac{1}{n^2} \sum_{i=1}^n
\varphi_{\taubar \|\Delta\|_2}^2(x_i^T \Delta) \cdot \left(\gamma_T(x_i^T
\beta) - \gamma_T(x_i^T \betatil)\right)^2 \\
& \le \frac{1}{n^2} \sum_{i=1}^n \frac{\tau^{'4}}{16} \cdot \frac{4}{T^2} \left(x_i^T(\beta - \betatil)\right)^2,
\end{align*}
since $\varphi_{\taubar \|\Delta\|_2} \le \frac{\tau^{'2} \|\Delta\|_2^2}{4}$ and $\gamma_T$ is $\frac{2}{T}$-Lipschitz. Similarly, using the homogeneity property
\begin{equation*}
\frac{1}{c^2} \cdot \varphi_{ct} (cu) = \varphi_t(u), \qquad \forall c > 0,
\end{equation*}
and the fact that $\varphi_{\taubar \|\Delta\|_2}$ is $\taubar \|\Delta\|_2$-Lipschitz, we have
\begin{align*}
\var \left(Z_{\betatil, \Delta} \! - \! Z_{\betatil, \Deltatil}\right) & \le
\frac{1}{n^2} \sum_{i=1}^n
\gamma_T^2(x_i^T \betatil) \left(\frac{\varphi_{\taubar \|\Delta\|_2}
(x_i^T \Delta)}{\|\Delta\|_2^2} - \frac{\varphi_{\taubar \|\Deltatil\|_2} (x_i^T
\Deltatil)}{\|\Deltatil\|_2^2}\right)^2 \\
& = \frac{1}{n^2} \sum_{i=1}^n \frac{\gamma_T^2 (x_i^T \betatil)}{\|\Delta\|_2^4} \left(\varphi_{\taubar \|\Delta\|_2}(x_i^T \Delta) - \varphi_{\taubar \|\Delta\|_2} \left(x_i^T \Deltatil \cdot \frac{\|\Delta\|_2}{\|\Deltatil\|_2}\right) \right)^2 \\
& \le \frac{1}{n^2} \sum_{i=1}^n \frac{\tau^{'2}}{\|\Delta\|_2^2}
\left(x_i^T \Delta - x_i^T \Deltatil \cdot \frac{\|\Delta\|_2}{\|\Deltatil\|_2}\right)^2 \\
& = \frac{1}{n^2} \sum_{i=1}^n \tau^{'2} \left(\frac{x_i^T \Delta}{\|\Delta\|_2} - \frac{x_i^T \Deltatil}{\|\Deltatil\|_2}\right)^2.
\end{align*}
Defining the centered Gaussian process
\begin{equation*}
Y_{\beta, \Delta} \defn \frac{\tau^{'2}}{\sqrt{2}T} \cdot \frac{1}{n}
\sum_{i=1}^n \ghat_i \cdot x_i^T \beta + \frac{\sqrt{2}\taubar}{\|\Delta\|_2}
\cdot \frac{1}{n} \sum_{i=1}^n \gtil_i \cdot x_i^T \Delta,
\end{equation*}
where the $\ghat_i$'s and $\gtil_i$'s are independent standard
Gaussians, it follows that
\begin{equation*}
 \var \left(Z_{\beta, \Delta} - Z_{\betatil, \Deltatil}\right) \le
 \var \left(Y_{\beta, \Delta} - Y_{\betatil, \Deltatil}\right).
\end{equation*}
Applying Lemma~\ref{LemGaussComp} in Appendix~\ref{AppConcentrate}, we
then have
\begin{equation}
 \label{EqnComp1}
 \E\left[\sup_{(\beta, \Delta) \in \Aregion_\delta} Z_{\beta,
     \Delta}\right] \le 2 \cdot \E\left[\sup_{(\beta, \Delta) \in
     \Aregion_\delta} Y_{\beta, \Delta}\right].
\end{equation}
Note further (cf.\ p.77 of~\cite{LedTal91}) that
\begin{equation}
\label{EqnComp2}
 \E \left[\sup_{(\beta, \Delta) \in \Aregion_\delta} |Z_{\beta,
     \Delta}|\right] \le \E\left[|Z_{\beta_0, \Delta_0}|\right] + 2
 \E\left[\sup_{(\beta, \Delta) \in \Aregion_\delta} Z_{\beta,
     \Delta}\right],
\end{equation}
for any $(\beta_0, \Delta_0) \in \Aregion_\delta$, and furthermore,
\begin{equation}
\label{EqnComp3}
\E\left[|Z_{\beta_0, \Delta_0}|\right] \le
\sqrt{\frac{2}{\pi}} \cdot \sqrt{\var\left(Z_{\beta_0,
    \Delta_0}\right)} \le c_0 \cdot
\sqrt{\frac{2}{\pi}} \cdot \sqrt{\frac{\tau^{'2}}{4n}}.
\end{equation}
Finally,
\begin{align}
\label{EqnComp4}
\E \left[\sup_{(\beta, \Delta) \in \Aregion_\delta} Y_{\beta,
    \Delta}\right] & \le \frac{\tau^{'2} R}{\sqrt{2}T} \cdot \E
\left[\left\|\frac{1}{n} \sum_{i=1}^n \ghat_i
  x_i\right\|_\infty\right] + \sqrt{2} \taubar \delta \cdot
\E\left[\left\|\frac{1}{n} \sum_{i=1}^n \gtil_i
  x_i\right\|_\infty\right] \notag \\
& \le \frac{c\tau^{'2} R \sigma_x}{T} \sqrt{\frac{\log p}{n}} + c\taubar
\delta \sigma_x \cdot \sqrt{\frac{\log p}{n}},
\end{align}
by Lemma~\ref{LemMaxSubGProd} in
Appendix~\ref{AppConcentrate}. Combining~\eqref{EqnComp0}, \eqref{EqnComp1}, \eqref{EqnComp2},
\eqref{EqnComp3}, and~\eqref{EqnComp4}, we then obtain
\begin{equation}
  \label{EqnCompFinal}
\E[Z(\delta)] \le \frac{c'\tau^{'2} R \sigma_x}{T} \sqrt{\frac{\log
    p}{n}} + c' \taubar \delta \sigma_x \cdot \sqrt{\frac{\log p}{n}}.
\end{equation}
Finally, combining~\eqref{EqnMeanBd},
\eqref{EqnAzumaHoeff}, and~\eqref{EqnCompFinal}, we see that
under the scaling $R \sqrt{\frac{\log p}{n}} \precsim 1$, we have
\begin{align}
\label{EqnCompFinal2}
\frac{1}{\|\Delta\|_2^2} \cdot \frac{1}{n} \sum_{i=1}^n &
\varphi_{\taubar \|\Delta\|_2} (x_i^T \Delta) \cdot
\gamma_T(x_i^T \beta) \notag \\
& \ge \frac{\lambda_{\min}\left(\E[x_i
    x_i^T]\right)}{4} - \left(\frac{c'\tau^{'2} R\sigma_x}{T}
\sqrt{\frac{\log p}{n}} + c' \taubar \delta \sigma_x
\sqrt{\frac{\log p}{n}}\right) \notag \\
& \ge \frac{\lambda_{\min}\left(\E[x_i x_i^T]\right)}{8} -
c'\taubar \delta \sigma_x\sqrt{\frac{\log p}{n}},
\end{align}
uniformly over all $(\beta, \Delta) \in \Aregion_\delta$, with
probability at least $1 - c_1 \exp(-c_2 n)$.

It remains to extend this bound to one that is uniform in the ratio
$\frac{\|\Delta\|_1}{\|\Delta\|_2}$, which we do via a peeling
argument~\citep{Alex87, vandeGeer}. Consider the inequality
\begin{align}
\label{EqnHummingBird}
\frac{1}{\|\Delta\|_2^2} \cdot \frac{1}{n} \sum_{i=1}^n \varphi_{\taubar
  \|\Delta\|_2} (x_i^T \Delta) \cdot \gamma_T(x_i^T \beta) & \ge
\frac{\lambda_{\min}\left(\E[x_i x_i^T]\right)}{8} - 2c'\taubar \sigma_x
\frac{\|\Delta\|_1}{\|\Delta\|_2} \sqrt{\frac{\log p}{n}},
\end{align}
as well as the event
\begin{align*}
\Annoying \defn \Bigg\{ \mbox{Inequality~\eqref{EqnHummingBird} holds
  $\forall \|\beta\|_2 \le 3$ and $\frac{\|\Delta\|_1}{\|\Delta\|_2}
  \le \frac{\lambda_{\min}\left(\E[x_i
      x_i^T]\right)}{16c'\tau\sigma_x}\sqrt{\frac{n}{\log p}}$}
\Bigg\}.
\end{align*}
Define the function
\begin{equation}
 \label{EqnFdef}
f(\beta, \Delta; X) \defn \frac{\lambda_{\min}\left(\E[x_i
    x_i^T]\right)}{8} - \frac{1}{\|\Delta\|_2^2} \cdot \frac{1}{n}
\sum_{i=1}^n \varphi_{\taubar \|\Delta\|_2}(x_i^T \Delta) \cdot \gamma_T(x_i^T \beta),
\end{equation}
along with
\begin{equation*}
  g(\delta) \defn c' \taubar \sigma_x \delta \sqrt{\frac{\log p}{n}},
  \qquad \text{and} \qquad h(\beta, \Delta) \defn
  \frac{\|\Delta\|_1}{\|\Delta\|_2}.
\end{equation*}
Note that~\eqref{EqnCompFinal2} implies
\begin{equation}
  \label{EqnPeel}
  \mprob\left(\sup_{h(\beta, \Delta) \le \delta} f(\beta, \Delta; X) \ge g(\delta)\right) \le c_1 \exp(-c_2n), \quad \text{for any } \delta > 0,
\end{equation}
where the $\sup$ is also restricted to $\{(\beta, \Delta): \beta \in \ball_2(3) \cap \ball_1(R), \; \Delta \in \ball_2(3)\}$.

Since $\frac{\|\Delta\|_1}{\|\Delta\|_2} \ge 1$, we have
\begin{equation}
\label{EqnMartinIncompetentOutlaw}
  1 \leq \; h(\beta, \Delta) \; \leq \frac{\lambda_{\min}\left(\E [x_i
      x_i^T] \right)}{16 c' \taubar \sigma_x}\sqrt{\frac{\numobs}{\log
      \pdim}},
\end{equation}
over the region of interest. For each integer $m \geq 1$, define the
set
\begin{align*}
  \MartinPingPong_m \defn \left \{ (\beta, \Delta) \, \mid \, 2^{m-1} \mu \le
  g(h(\beta, \Delta)) \le 2^m \mu \right\},
\end{align*}
where $\mu = c' \taubar \sigma_x \sqrt{\frac{\log p}{n}}$. By a union bound, we then have
\begin{align*}
  \mprob(\Annoying^c) & \le \sum_{m=1}^M \mprob\left(\exists (\beta,
  \Delta) \in \MartinPingPong_m \mbox{ s.t. } f(\beta, \Delta; X) \ge
  2g(h(\beta, \Delta))\right),
\end{align*}
where the index $m$ ranges up to \mbox{$M \defn
  \Big\lceil \log\left(c \sqrt{\frac{\numobs}{\log \pdim }}
  \right)\Big \rceil$} over the relevant
region~\eqref{EqnMartinIncompetentOutlaw}.  By the definition~\eqref{EqnFdef}
of $f$, we have
\begin{align*}
  \mprob(\Annoying^c) & \le \sum_{m=1}^M \mprob\left(\sup_{h(\beta,
    \Delta) \le g^{-1}(2^m \mu)} f(\beta, \Delta; X) \ge 2^m \mu\right)
  \; \stackrel{(i)}{\leq} \; M \cdot c_1 \exp(-c_2 n),
\end{align*}
where inequality (i) applies the tail bound~\eqref{EqnPeel}. It
follows that
\begin{equation*}
  \mprob(\Annoying^c) \le c_1 \exp\left(-c_2n +
  \log\log\left(\frac{n}{\log p}\right)\right) \; \leq c_1' \exp \left(
  - c_2' \numobs \right).
\end{equation*}
Multiplying through by $\|\Delta\|_2^2$ then yields the desired
result.


\section{Auxiliary Results}
\label{AppConcentrate}
 
In this section, we provide some auxiliary results that are useful for
our proofs.  The first lemma concerns symmetrization and
desymmetrization of empirical processes via Rademacher random
variables:
\begin{mylemma} [Lemma 2.3.6 in~\cite{vanWel96}]
	\label{LemSymmetrization}
	Let $\{Z_i\}_{i=1}^n$ be independent zero-mean stochastic processes. Then
	\begin{equation*}
		\frac{1}{2} \E\left[\sup_{t \in T}\left|\sum_{i=1}^n \epsilon_i Z_i(t_i)\right|\right] \! \le \! \E\left[\sup_{t \in T} \left|\sum_{i=1}^n Z_i(t_i)\right|\right] \! \le \! 2 \E\left[\sup_{t \in T}\left|\sum_{i=1}^n \epsilon_i (Z_i(t_i) - \mu_i)\right|\right],
	\end{equation*}
	where the $\epsilon_i$'s are independent Rademacher variables and the functions $\mu_i: \scriptF \rightarrow \real$ are arbitrary.
\end{mylemma}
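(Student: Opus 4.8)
The statement is the standard symmetrization/desymmetrization inequality, and the plan is to establish both directions by the classical ``ghost sample'' argument. Throughout, let $\{Z_i'\}_{i=1}^n$ denote an independent copy of $\{Z_i\}_{i=1}^n$, defined on the same space and independent of both $\{Z_i\}$ and the Rademacher sequence $\{\epsilon_i\}$.

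For the right-hand (symmetrization) inequality, the plan is to inflate the sum with the ghost sample and then randomize its signs. Since each $Z_i$ is zero-mean, for each fixed $t$ we have $\E_{Z'}[\sum_i Z_i'(t_i)] = 0$, so $\sum_i Z_i(t_i) = \E_{Z'}[\sum_i (Z_i(t_i) - Z_i'(t_i))]$; passing the expectation over $Z'$ outside the absolute value and supremum via Jensen's inequality gives
$$\E\left[\sup_{t \in T}\left|\sum_{i=1}^n Z_i(t_i)\right|\right] \le \E\left[\sup_{t \in T}\left|\sum_{i=1}^n \bigl(Z_i(t_i) - Z_i'(t_i)\bigr)\right|\right].$$
The differences $\{Z_i - Z_i'\}_{i=1}^n$ are jointly symmetric and independent across $i$, so multiplying each by an independent Rademacher sign leaves the joint law unchanged, and the right side equals $\E[\sup_t |\sum_i \epsilon_i(Z_i(t_i) - Z_i'(t_i))|]$. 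Writing $Z_i(t_i) - Z_i'(t_i) = (Z_i(t_i) - \mu_i) - (Z_i'(t_i) - \mu_i)$ for an arbitrary deterministic $\mu_i$ (the constants cancel) and applying the triangle inequality splits this into two terms of identical distribution, each equal to $\E[\sup_t|\sum_i \epsilon_i(Z_i(t_i) - \mu_i)|]$, which produces the factor of $2$.

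For the left-hand (desymmetrization) inequality, the plan is to condition on the signs and group summands by sign. For fixed $\epsilon$, partition the indices into $S_+ = \{i : \epsilon_i = +1\}$ and $S_- = \{i : \epsilon_i = -1\}$, so that
$$\sum_{i=1}^n \epsilon_i Z_i(t_i) = \sum_{i \in S_+} Z_i(t_i) - \sum_{i \in S_-} Z_i(t_i),$$
and the triangle inequality bounds $\sup_t|\sum_i \epsilon_i Z_i(t_i)|$ by the sum of the suprema of the two subset sums. For any fixed subset $S$, the zero-mean property lets us pad $\sum_{i \in S} Z_i$ with the complementary ghost terms $\sum_{i \notin S} Z_i'$, whose conditional mean vanishes; applying Jensen and recognizing the padded sum as having the same law as $\sum_i Z_i$ yields $\E[\sup_t |\sum_{i\in S} Z_i(t_i)|] \le \E[\sup_t|\sum_{i} Z_i(t_i)|]$. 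Summing the two contributions gives the factor of $2$, and since this bound is independent of $\epsilon$, taking the expectation over the signs completes the argument.

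The main obstacle is not any single computation but the measurability bookkeeping: for suprema over an uncountable index set $T$ the quantities above need not be measurable, so the statement must be read with outer expectations and Jensen applied in its outer-expectation form, exactly as in van der Vaart and Wellner. A secondary point requiring care is that inserting the Rademacher signs is distribution-preserving, which rests on the \emph{joint} symmetry of the independent differences $Z_i - Z_i'$ rather than merely their marginal symmetry. Since both directions are entirely standard, in the paper we simply cite Lemma 2.3.6 of~\cite{vanWel96}.
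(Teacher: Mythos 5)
Your proposal is correct: both directions are the standard ghost-sample argument (Jensen via an independent copy, sign-insertion using the joint symmetry of the independent differences for symmetrization; conditioning on the signs and padding each sign-group with zero-mean ghost terms for desymmetrization), and your measurability caveat about outer expectations is the right one. The paper itself offers no proof — it states the lemma as a citation to Lemma 2.3.6 of van der Vaart and Wellner — and your sketch is precisely the textbook proof of that cited result, so there is nothing to reconcile.
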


We also have a useful lemma that bounds the Gaussian
complexity in terms of the Rademacher complexity:

\begin{mylemma} [Lemma 4.5 in~\cite{LedTal91}]
	\label{LemRadGauss}
	Let $Z_1, \dots, Z_n$ be independent stochastic processes. Then
	\begin{equation*}
		\E\left[\sup_{t \in T} \left|\sum_{i=1}^n \epsilon_i
                  Z_i(t_i) \right|\right] \le \sqrt{\frac{\pi}{2}}
                \cdot \E\left[\sup_{t \in T} \left|\sum_{i=1}^n g_i
                  Z_i(t_i)\right|\right],
	\end{equation*}
	where the $\epsilon_i$'s are Rademacher variables and the
        $g_i$'s are standard normal.
\end{mylemma}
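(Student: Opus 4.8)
The plan is to exploit the sign--magnitude decomposition of a Gaussian variable together with Jensen's inequality. First I would observe that for each $i$, a standard normal $g_i$ admits the distributional representation $g_i \stackrel{d}{=} \epsilon_i |g_i|$, where $\epsilon_i = \sign(g_i)$ is a Rademacher variable independent of the half-normal magnitude $|g_i|$; this follows from the symmetry of the Gaussian density about zero. Consequently, taking the $\epsilon_i$'s appearing in the statement to be exactly these independent signs, I could rewrite the Gaussian average by conditioning on the magnitudes and then on the signs:
\[
\E\Big[\sup_{t \in T}\Big|\sum_{i=1}^n g_i Z_i(t_i)\Big|\Big]
= \E_\epsilon \, \E_{|g|}\Big[\sup_{t \in T}\Big|\sum_{i=1}^n \epsilon_i |g_i|\, Z_i(t_i)\Big|\Big].
\]

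The second step is to note that the map $x = (x_1, \dots, x_n) \mapsto \sup_{t \in T}\big|\sum_i x_i Z_i(t_i)\big|$ is convex, being a supremum of absolute values of linear functionals of $x$. Freezing the signs $\epsilon$ and applying Jensen's inequality to the inner expectation over the magnitudes, which are i.i.d.\ with common mean $\E|g_i| = \sqrt{2/\pi}$, I would pull the expectation inside:
\[
\E_{|g|}\Big[\sup_{t}\Big|\sum_i \epsilon_i |g_i|\, Z_i(t_i)\Big|\Big]
\;\ge\; \sup_t \Big|\sum_i \epsilon_i \,\E|g_i|\, Z_i(t_i)\Big|
\;=\; \sqrt{\tfrac{2}{\pi}}\;\sup_t\Big|\sum_i \epsilon_i Z_i(t_i)\Big|.
\]
Taking the outer expectation over $\epsilon$ then yields $\E[\sup_t|\sum_i g_i Z_i(t_i)|] \ge \sqrt{2/\pi}\,\E[\sup_t|\sum_i \epsilon_i Z_i(t_i)|]$, and rearranging produces exactly the claimed constant $\sqrt{\pi/2}$.

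The main obstacle is less in the algebra than in the measure-theoretic bookkeeping. I would need the supremum over $t \in T$ to be a genuinely measurable random variable so that the interchange of expectation and supremum in the Jensen step is legitimate; this is standard but requires $T$ to be countable or the processes $\{Z_i(t_i)\}$ to be separable, a convention I would invoke explicitly. I would also confirm integrability (finiteness of the Gaussian complexity) so the bound is not vacuous, and check that the independence of each $\epsilon_i$ from $|g_i|$ is used correctly when conditioning—all routine once the separability hypothesis is in place.
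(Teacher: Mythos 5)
Your proof is correct: the sign--magnitude decomposition $g_i \stackrel{d}{=} \epsilon_i |g_i|$ with $\epsilon_i$ independent of $|g_i|$, followed by Jensen's inequality applied to the convex map $x \mapsto \sup_{t \in T} \big|\sum_i x_i Z_i(t_i)\big|$ together with $\E|g_i| = \sqrt{2/\pi}$, is exactly the classical argument for this Rademacher--Gaussian comparison. The paper itself gives no proof---it simply quotes the result as Lemma 4.5 of Ledoux and Talagrand (1991)---and your argument is the standard one underlying that cited lemma, so the two approaches coincide.
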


\noindent We next state a version of the Sudakov-Fernique comparison
inequality:

\begin{mylemma} [Corollary 3.14 in~\cite{LedTal91}]
\label{LemGaussComp}
Given a countable index set $T$, let $\{ X(t), t \in T \}$ and $\{
Y(t), t \in T \}$ be centered Gaussian processes such that
\begin{equation*}
  \var\left(Y(s) - Y(t)\right) \le \var\left(X(s) -
  X(t)\right), \qquad \forall (s,t) \in T \times T.
\end{equation*}
Then
\begin{equation*}
  \E\left[\sup_{t \in T} Y(t)\right] \le 2 \cdot
  \E\left[\sup_{t \in T} X(t)\right].
\end{equation*}
\end{mylemma}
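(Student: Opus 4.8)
The statement is the Sudakov--Fernique comparison inequality, and since it is quoted from~\cite{LedTal91} rather than proved in the paper, the plan is to sketch the standard Gaussian interpolation (``smart path'') argument. In fact this argument yields the sharper conclusion $\E[\sup_{t \in T} Y(t)] \le \E[\sup_{t \in T} X(t)]$, from which the stated bound follows trivially, since $\sup_{t} X(t) \ge X(t_0)$ for any fixed $t_0$ gives $\E[\sup_t X(t)] \ge 0$.

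First I would reduce to a finite index set. Because $T$ is countable, write it as an increasing union $T = \bigcup_m T_m$ of finite sets; each $\sup_{t \in T_m} X(t)$ increases to $\sup_{t \in T} X(t)$, and since every partial supremum dominates a fixed integrable coordinate $X(t_0)$, monotone convergence lets me pass to the limit on both sides, reducing the claim to $T = \{1, \dots, n\}$. Realizing $X$ and $Y$ as independent centered Gaussian vectors on a common space, I would then introduce the interpolation $Z(s) = \sqrt{s}\, X + \sqrt{1-s}\, Y$ for $s \in [0,1]$ together with the smooth surrogate maximum $f_\beta(x) = \beta^{-1} \log \sum_i e^{\beta x_i}$, and study $\phi(s) = \E[f_\beta(Z(s))]$. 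Differentiating in $s$ and applying Gaussian integration by parts (Stein's identity), using independence of $X$ and $Y$, gives
\begin{equation*}
\phi'(s) = \tfrac{1}{2} \sum_{i,j} \big(\E[X_i X_j] - \E[Y_i Y_j]\big)\, \E\big[\partial_i \partial_j f_\beta(Z(s))\big].
\end{equation*}

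The crux of the argument --- and the step I expect to be the main obstacle --- is converting this \emph{covariance} comparison into the \emph{increment-variance} comparison that is actually hypothesized; this is precisely what distinguishes Sudakov--Fernique from the stronger-hypothesis Slepian lemma. The key structural fact is that the softmax Hessian has vanishing row sums: writing $p_i = e^{\beta x_i}/\sum_k e^{\beta x_k}$, one computes $\partial_i \partial_j f_\beta = \beta(\delta_{ij} p_i - p_i p_j)$, so $\sum_j \partial_i\partial_j f_\beta = 0$. Substituting $\E[X_i X_j] = \tfrac{1}{2}\big(\E[X_i^2] + \E[X_j^2] - \var(X_i - X_j)\big)$ and exploiting the zero-row-sum property, every diagonal variance term cancels, leaving
\begin{equation*}
\phi'(s) = -\tfrac{1}{4} \sum_{i \neq j} \big(\var(X_i - X_j) - \var(Y_i - Y_j)\big)\, \E\big[\partial_i \partial_j f_\beta(Z(s))\big].
\end{equation*}
For $i \neq j$ the off-diagonal Hessian entry $\partial_i\partial_j f_\beta = -\beta p_i p_j$ is nonpositive, while the hypothesis forces $\var(X_i - X_j) - \var(Y_i - Y_j) \ge 0$; hence every summand is nonpositive and $\phi'(s) \ge 0$. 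It follows that $\E[f_\beta(X)] = \phi(1) \ge \phi(0) = \E[f_\beta(Y)]$.

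Finally I would let $\beta \to \infty$. Since $\max_i x_i \le f_\beta(x) \le \max_i x_i + \beta^{-1}\log n$, taking the limit yields $\E[\max_i X_i] \ge \E[\max_i Y_i]$, the comparison on $T_n$; undoing the finite-set reduction via monotone convergence then gives $\E[\sup_t Y(t)] \le \E[\sup_t X(t)]$, which implies the stated bound. Beyond the algebraic cancellation above, the only remaining care is the routine justification of differentiating under the expectation and of the Gaussian integration-by-parts identity, both of which are standard given the smoothness of $f_\beta$ and the Gaussian tails of $Z(s)$.
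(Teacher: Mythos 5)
Your proposal is correct, but it is worth being clear about what the paper actually does here: Lemma~\ref{LemGaussComp} is stated purely as a citation (Corollary 3.14 of Ledoux--Talagrand) in the auxiliary-results appendix, and no proof is given in the paper at all. Your interpolation argument is therefore a genuinely different, self-contained route. Moreover, it proves something strictly stronger: the Chatterjee-style smart-path computation with the softmax surrogate $f_\beta(x) = \beta^{-1}\log\sum_i e^{\beta x_i}$ yields the sharp Sudakov--Fernique constant, $\E[\sup_t Y(t)] \le \E[\sup_t X(t)]$, whereas the Ledoux--Talagrand corollary cited by the paper carries the factor $2$ (an artifact of their derivation through Slepian-type comparison theorems rather than direct interpolation). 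Your reduction from constant $1$ to the stated constant $2$ is also handled correctly, since centering gives $\E[\sup_t X(t)] \ge \E[X(t_0)] = 0$. The key algebraic step --- using the zero row sums of the softmax Hessian, $\sum_j \partial_i\partial_j f_\beta = 0$, to trade the covariance comparison for the increment-variance comparison, and then the sign $\partial_i\partial_j f_\beta = -\beta p_i p_j \le 0$ off the diagonal --- is exactly right, and is precisely what separates Sudakov--Fernique from Slepian. What each approach buys: the paper's citation keeps the appendix short and defers to a standard reference; your proof makes the paper self-contained and sharpens the constant (which would propagate harmlessly, since the lemma is only used in~\eqref{EqnComp1} up to constants). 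The only point needing care beyond what you flag is the behavior of $\phi'(s)$ at the endpoints $s \in \{0,1\}$: the factors $1/\sqrt{s}$ and $1/\sqrt{1-s}$ in $\frac{d}{ds}Z(s)$ are singular there, but they cancel after Gaussian integration by parts, so $\phi'$ extends continuously to $[0,1]$ and the fundamental theorem of calculus applies; this is routine but should be said.
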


A zero-mean random variable $Z$ is sub-Gaussian with parameter
$\sigma$ if $\mprob(Z > t) \leq \exp(- \frac{t^2}{2 \sigma^2})$ for
all $t \geq 0$.  The next lemma provides a standard bound on the
expected maximum of $N$ such variables (cf.\ equation (3.6) in~\cite{LedTal91}):

\begin{mylemma}
  \label{LemSubGMax}
Suppose $X_1, \dots, X_N$ are zero-mean sub-Gaussian random variables
such that $\max \limits_{j=1, \ldots, N} \|X_j\|_{\psi_2} \le
\sigma$. Then $\E\left[\max \limits_{j=1, \ldots, \pdim} |X_j| \right]
\le c_0 \, \sigma \sqrt{\log N}$, where $c_0 > 0$ is a universal
constant.
\end{mylemma}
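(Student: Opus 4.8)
The plan is to establish the bound via a standard moment-generating-function argument combined with Jensen's inequality, optimizing over a free parameter at the very end. First I would translate the hypothesis into a clean exponential-moment inequality: since $\|X_j\|_{\psi_2} \le \sigma$ (equivalently, each $X_j$ satisfies the stated sub-Gaussian tail bound) controls both the upper and lower tails of $X_j$, there is a universal constant $c$ such that $\E[\exp(\lambda X_j)] \le \exp(c\lambda^2 \sigma^2)$ for all $\lambda \in \real$, and hence, after accounting for both tails of the absolute value, $\E[\exp(\lambda |X_j|)] \le 2\exp(c\lambda^2 \sigma^2)$. This is the step where one must pass between the different-but-equivalent notions of a sub-Gaussian parameter, and it introduces only universal constants.

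Next, for any fixed $\lambda > 0$, I would apply Jensen's inequality to the convex map $x \mapsto \exp(\lambda x)$ to obtain $\exp\!\big(\lambda\, \E[\max_j |X_j|]\big) \le \E[\exp(\lambda \max_j |X_j|)]$. Bounding the maximum of the exponentials by their sum and invoking the moment bound from the first step gives
\begin{equation*}
\exp\!\Big(\lambda\, \E\big[\max_j |X_j|\big]\Big) \;\le\; \sum_{j=1}^N \E[\exp(\lambda |X_j|)] \;\le\; 2N \exp(c\lambda^2 \sigma^2).
\end{equation*}
Taking logarithms and dividing by $\lambda$ then produces the one-parameter family of bounds $\E[\max_j |X_j|] \le \frac{\log(2N)}{\lambda} + c\lambda \sigma^2$, valid for every $\lambda > 0$.

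Finally, I would optimize the right-hand side over $\lambda$; choosing $\lambda \asymp \frac{\sqrt{\log N}}{\sigma}$ balances the two terms and yields a bound of the form $c_0\, \sigma \sqrt{\log(2N)} \le c_0'\, \sigma \sqrt{\log N}$, valid for $N \ge 2$ (the case $N = 1$ being immediate from $\E|X_1| \precsim \sigma$). The only genuine subtlety is the first step — producing a usable exponential-moment inequality from the stated tail condition — but this is a routine equivalence for sub-Gaussian variables whose constants are absorbed into $c_0$. An alternative route that sidesteps the MGF entirely is to integrate the union-bounded tail $\mprob(\max_j |X_j| > t) \le \min\{1,\, 2N\exp(-t^2/2\sigma^2)\}$, splitting the integral at the threshold $t_0 = \sigma\sqrt{2\log(2N)}$ where the two competing bounds cross; this delivers the same rate directly.
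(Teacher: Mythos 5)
Your proof is correct. The paper itself does not prove this lemma at all --- it is stated as a standard fact with a pointer to equation (3.6) of Ledoux and Talagrand --- so your argument is not so much a different route as the canonical self-contained derivation of the inequality the paper outsources to a citation. The chain you use (pass from the $\psi_2$/tail hypothesis to an MGF bound $\E[\exp(\lambda X_j)] \le \exp(c\lambda^2\sigma^2)$, handle $|X_j|$ by combining both tails, apply Jensen and the max-by-sum bound to get $\E[\max_j |X_j|] \le \frac{\log(2N)}{\lambda} + c\lambda\sigma^2$, then optimize $\lambda \asymp \sqrt{\log N}/\sigma$) is exactly the textbook argument behind the cited inequality, and your alternative via integrating the union-bounded tail with a split at $t_0 = \sigma\sqrt{2\log(2N)}$ is equally valid and delivers the same rate. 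The only caveat, which is an artifact of the lemma's statement rather than of your proof, is the case $N=1$: there the claimed bound reads $c_0\sigma\sqrt{\log 1} = 0$, which cannot follow from $\E|X_1| \precsim \sigma$; the statement is implicitly intended for $N \ge 2$ (in the paper it is only invoked with $N = p$ large), so your restriction to $N \ge 2$ is the right reading and the parenthetical about $N = 1$ can simply be dropped.
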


We also have a lemma about maxima of products of sub-Gaussian
variables:

\begin{mylemma}
\label{LemMaxSubGProd}
Suppose $\{g_i\}_{i=1}^n$ are i.i.d.\ standard Gaussians and
$\{X_i\}_{i=1}^n \subseteq \real^p$ are i.i.d.\ sub-Gaussian vectors
with parameter bounded by $\sigma_x$. Then as long as $\numobs \geq c
\sqrt{\log \pdim}$ for some constant $c > 0$, we have
\begin{equation*}
\E\left[\left\|\frac{1}{n} \sum_{i=1}^\numobs g_i X_i\right\|_\infty
  \right] \le c' \sigma_x \sqrt{\frac{\log \pdim}{\numobs}}.
\end{equation*}
\end{mylemma}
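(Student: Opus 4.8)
The plan is to decouple the Gaussian multipliers $g_i$ from the sub-Gaussian vectors $X_i$ by conditioning, thereby reducing the problem to a maximum of conditionally \emph{Gaussian} (hence sub-Gaussian) linear combinations, to which Lemma~\ref{LemSubGMax} applies directly. Writing the $\ell_\infty$-norm as a coordinatewise maximum, observe that conditionally on $\{X_i\}_{i=1}^\numobs$, each coordinate $\frac{1}{\numobs}\sum_{i=1}^\numobs g_i X_{ij}$ is a zero-mean Gaussian with standard deviation $\sigma_j \defn \frac{1}{\numobs}\big(\sum_{i=1}^\numobs X_{ij}^2\big)^{1/2}$, and is in particular sub-Gaussian with parameter $\sigma_j$. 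First I would apply Lemma~\ref{LemSubGMax} with $N = \pdim$ conditionally on the $X_i$'s to obtain
\begin{equation*}
\E\Big[\big\|\tfrac{1}{\numobs}\textstyle\sum_{i} g_i X_i\big\|_\infty \,\Big|\, \{X_i\}\Big] \le c_0\sqrt{\log \pdim}\,\max_{j}\sigma_j,
\end{equation*}
and then take expectations over the $X_i$'s via the tower property.

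It then remains to show $\E[\max_j \sigma_j]\lesssim \sigma_x/\sqrt{\numobs}$. Since $\max_j\sigma_j = \numobs^{-1/2}\big(\max_j \frac{1}{\numobs}\sum_i X_{ij}^2\big)^{1/2}$, Jensen's inequality (concavity of the square root) reduces this to establishing the second-moment bound $\E\big[\max_j \frac{1}{\numobs}\sum_i X_{ij}^2\big]\le C\sigma_x^2$. Combining the two bounds then yields $\E\big[\|\frac{1}{\numobs}\sum_i g_i X_i\|_\infty\big]\le c'\sigma_x\sqrt{\frac{\log \pdim}{\numobs}}$, as claimed.

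The hard part will be controlling $\max_j \frac{1}{\numobs}\sum_i X_{ij}^2$ uniformly over the $\pdim$ coordinates. Here I would use that each $X_{ij}^2$ is sub-exponential with $\|X_{ij}^2\|_{\psi_1}\lesssim\sigma_x^2$ and mean at most $C\sigma_x^2$, so that Bernstein's inequality gives $\mprob\big(\frac{1}{\numobs}\sum_i (X_{ij}^2 - \E X_{ij}^2) > u\big)\le \exp\big(-c\numobs\min(u^2/\sigma_x^4,\,u/\sigma_x^2)\big)$ for each fixed $j$. A union bound over $j = 1,\dots,\pdim$ together with integration of this tail shows that the expected maximum deviation is of order $\sigma_x^2\big(\sqrt{\frac{\log \pdim}{\numobs}} + \frac{\log \pdim}{\numobs}\big)$, which is $O(\sigma_x^2)$ under the stated scaling of $(\numobs,\pdim)$; adding back the uniformly bounded means $\max_j \E X_{ij}^2 \le C\sigma_x^2$ gives the required second-moment bound. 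This concentration step is precisely where the sample-size condition is invoked. I note that conditioning on $X$ is essential: the products $g_i X_{ij}$ are only sub-exponential (not sub-Gaussian), so Lemma~\ref{LemSubGMax} cannot be applied to them directly, and it is the Gaussianity of the $g_i$ that restores an exact sub-Gaussian tail at the coordinate level after conditioning.
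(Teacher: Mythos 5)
Your proposal is correct and follows essentially the same route as the paper's proof: condition on $\{X_i\}_{i=1}^n$, apply Lemma~\ref{LemSubGMax} to the $p$ conditionally Gaussian coordinates, and then control the maximum of the empirical second moments $Z_j = \frac{1}{n}\sum_{i=1}^n X_{ij}^2$ via sub-exponential tail bounds, a union bound over coordinates, and tail integration. The only cosmetic difference is that you pass through Jensen's inequality to reduce to bounding $\E[\max_j Z_j]$, whereas the paper integrates the tail of $\max_j \sqrt{Z_j}$ directly; both arguments invoke the sample-size assumption at exactly the same concentration step.
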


\begin{proof}
Conditioned on $\{X_i\}_{i=1}^n$, for each $j = 1, \ldots, \pdim$, the
variable $\left|\frac{1}{n} \sum_{i=1}^n g_i X_{ij}\right|$ is
zero-mean and sub-Gaussian with parameter bounded by
$\frac{\sigma_x}{\numobs} \, \sqrt{\sum_{i=1}^\numobs
  X_{ij}^2}$. Hence, by Lemma~\ref{LemSubGMax}, we have
\begin{equation*}
\E\left[\left\|\frac{1}{n} \sum_{i=1}^\numobs g_i X_i\right\|_\infty
  \Bigg | X\right] \le \frac{c_0 \sigma_x}{\numobs} \cdot \max_{j=1,
  \ldots, \pdim} \sqrt{\sum_{i=1}^\numobs X_{ij}^2} \cdot \sqrt{\log
  \pdim},
 \end{equation*} 
implying that
\begin{equation}
  \label{EqnCookieMonster}
\E\left[\left\|\frac{1}{n} \sum_{i=1}^n g_i X_i\right\|_\infty \right]
\le c_0 \sigma_x \sqrt{\frac{\log p}{n}} \cdot \E\left[\max_j
  \sqrt{\frac{\sum_{i=1}^n X_{ij}^2}{n}}\right].
\end{equation}
Furthermore, $Z_j \defn \frac{\sum_{i=1}^n X_{ij}^2}{n}$ is an i.i.d.\ average of subexponential variables, each with parameter
bounded by $c \sigma_x$. Since $\Exs[Z_j] \leq 2
\sigma_x^2$, we have
\begin{align}
\label{EqnZbound}
\mprob \left (Z_j - \Exs[Z_j] \ge u + 2 \sigma_x^2 \right) \le c_1
\exp \left(- \frac{c_2 \numobs u}{\sigma_x} \right), \qquad \mbox{$\forall u \geq 0$ and $1 \le j \le p$.}
\end{align}
Now fix some $t \geq \sqrt{2 \sigma_x^2}$.  Since the
$\{Z_j\}_{j=1}^\pdim$ are all nonnegative, we have
\begin{align*}
\E \left[\max_{j=1, \ldots, \pdim} \sqrt{Z_j} \right] & \leq t +
\int_t^\infty \mprob\left(\max_{j=1, \ldots, \pdim} \sqrt{Z_j} > s
\right) ds \\
& \le t + \sum_{j=1}^p \int_t^\infty \mprob \left(\sqrt{Z_j} > s
\right) ds \\
& \leq t + c_1 \pdim \int_t^\infty \exp\left(-\frac{c_2 n (s^2 - 2
  \sigma_x^2)}{\sigma_x}\right) ds
\end{align*}
where the final inequality follows from the bound~\eqref{EqnZbound}
with $u = s^2 - 2 \sigma_x^2$, valid as long as $s^2 \geq t^2 \geq 2
\sigma_x^2$. Integrating, we have the bound
\begin{align*}
\E \left[\max_{j=1, \ldots, \pdim} \sqrt{Z_j} \right] & \leq t + c_1'
p\sigma_x \exp\left(-\frac{c_2' \numobs (t^2 - 2
  \sigma_x^2)}{\sigma_x^2}\right).
\end{align*}
Since $\numobs \succsim \sqrt{\log p}$ by assumption, setting $t$ equal to
a constant implies $\E \left[ \max_j \sqrt{Z_j} \right] = \order(1)$,
which combined with~\eqref{EqnCookieMonster} gives the
desired result.
\end{proof}


\section{Capped-$\ell_1$ Penalty}
\label{AppCapped}

In this section, we show how our results on nonconvex but
subdifferentiable regularizers may be extended to include certain
types of more complicated regularizers that do not possess
(sub)gradients everywhere, such as the capped-$\ell_1$ penalty.

In order to handle the case when $\rho_\lambda$ has points where
neither a gradient nor subderivative exists, we assume the existence
of a function $\rhotil_\lambda$ (possibly defined according to the
particular local optimum $\betatil$ of interest), such that the
following conditions hold:

\begin{assumption}
	\label{AsRhotil}
	\mbox{}
\begin{enumerate}
	\item[(i)] The function $\rhotil_\lambda$ is
          differentiable/subdifferentiable everywhere, and $\|\nabla
          \rhotil_\lambda(\betatil)\|_\infty \le \lambda L$.
	\item[(ii)] For all $\beta \in \real^p$, we have
          $\rhotil_\lambda(\beta) \ge \rho_\lambda(\beta)$.
	\item[(iii)] The equality $\rhotil_\lambda(\betatil) = \rho_\lambda(\betatil)$ holds.
	\item[(iv)] There exists $\mu_1 \ge 0$ such that $\rhotil_\lambda(\beta) + \frac{\mu_1}{2} \|\beta\|_2^2$ is convex.
	\item[(v)] For some index set $A$ with $|A| \le k$ and some parameter $\mu_2 \ge 0$, we have
	\begin{equation*}
		\rhotil_\lambda(\betastar) - \rhotil_\lambda(\betatil)
                \le \lambda L \|\betatil_A - \betastar_A\|_1 - \lambda
                L \|\betatil_{A^c} - \betastar_{A^c}\|_1 + \frac{\mu_2}{2}
                \|\betatil - \betastar\|_2^2.
	\end{equation*}
\end{enumerate}
\end{assumption}

\noindent In addition, we assume conditions (i)--(iii) of
Assumption~\ref{AsRho} in Section~\ref{SecNonconvexRegExas} above.

When $\rho_\lambda(\beta) + \frac{\mu_1}{2} \|\beta\|_2^2$ is convex
for some $\mu_1 \ge 0$ (as in the case of SCAD or MCP), we may take
$\rhotil_\lambda = \rho_\lambda$ and $\mu_2 = 0$ (cf.\
Lemma~\ref{LemEll1Reg} in Appendix~\ref{AppGeneralProps}). When no
such convexification of $\rho_\lambda$ exists (as in the case of the
capped-$\ell_1$ penalty), we instead construct a separate convex
function $\rhotil_\lambda$ to upper-bound $\rho_\lambda$ and take
$\mu_1 = 0$.

Under the conditions of Assumption~\ref{AsRhotil}, we have the
following variant of Theorems~\ref{TheoEll12Meta}
and~\ref{TheoPredMeta}:

\begin{mytheorem}
\label{TheoEll12Meta2}
Suppose $\Loss_n$ satisfies the RSC conditions~\eqref{EqnRSC}, and the
functions $\myrho$ and $\rhotil_\lambda$ satisfy
Assumption~\ref{AsRho} and Assumption~\ref{AsRhotil},
respectively. Suppose $\lambda$ is chosen according to the
bound~\eqref{EqnLambdaChoice} and $n \ge \frac{16R^2
  \max(\tau_1^2, \tau_2^2)}{\alpha_2^2} \log p$. Then for any stationary point $\betatil$ of the program~\eqref{EqnNonconvexReg}, we have
\begin{equation*}
 \|\betatil - \betastar\|_2 \le \frac{7 \lambda L \sqrt{k}}{4\alpha_1 -
   2\mu_1 - 2\mu_2}, \qquad \mbox{and} \qquad \|\betatil -
 \betastar\|_1 \le \frac{28 \lambda L k}{2\alpha_1 - \mu_1 - \mu_2},
	\end{equation*}
	along with the prediction error bound
	\begin{equation*}
		\inprod{\nabla \Loss_n(\betatil) - \nabla \Loss_n(\betastar)}{\nutil} \le \lambda^2 L^2 k \left(\frac{21}{8\alpha_1 - 4\mu_1 - 4\mu_2)} + \frac{49(\mu_1 + \mu_2)}{8(2\alpha_1 - \mu_1 - \mu_2)^2}\right).
	\end{equation*}
\end{mytheorem}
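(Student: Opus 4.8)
The plan is to replay the proofs of Theorems~\ref{TheoEll12Meta} and~\ref{TheoPredMeta} almost verbatim, letting the auxiliary majorizer $\rhotil_\lambda$ take over the three roles that $\rho_\lambda$ played there. Concretely, condition (i) of Assumption~\ref{AsRhotil} supplies the gradient bound $\|\nabla \rhotil_\lambda(\betatil)\|_\infty \le \lambda L$ that replaces Lemma~\ref{LemRhoCond}(a); condition (iv) supplies the weak-convexity linearization that replaces~\eqref{EqnRhotilConvex}; and condition (v) directly supplies the cone-type inequality that, in the original argument, was produced by~\eqref{EqnReindeer} together with Lemma~\ref{LemEll1Reg}. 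Since conditions (iv)--(v) of Assumption~\ref{AsRho} are unavailable here, I will \emph{not} invoke Lemma~\ref{LemRhoCond}(b); instead I will control $\|\nutil\|_1$ throughout by splitting it as $\|\nutil_A\|_1 + \|\nutil_{A^c}\|_1$, where $A$ is the index set (with $|A| \le k$) furnished by condition (v).

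The one genuinely new step---and the main obstacle---is to show that the stationarity of $\betatil$ for the true composite objective $\Loss_n + \rho_\lambda$ transfers to the surrogate $\Loss_n + \rhotil_\lambda$, i.e.\ that $\inprod{\nabla \Loss_n(\betatil) + \nabla \rhotil_\lambda(\betatil)}{\beta - \betatil} \ge 0$ for every feasible $\beta$, to be used in place of~\eqref{EqnFirstOrder}. This is where conditions (i)--(iii) of Assumption~\ref{AsRhotil} enter: because $\rhotil_\lambda \ge \rho_\lambda$ everywhere with equality at $\betatil$, any point locally optimal for $\Loss_n + \rho_\lambda$ is also locally optimal for $\Loss_n + \rhotil_\lambda$ (the surrogate dominates the true objective and agrees with it at $\betatil$), and since $\rhotil_\lambda$ is differentiable/convex-subdifferentiable at $\betatil$ with a genuine (sub)gradient of bounded norm, the usual first-order variational inequality holds for the surrogate. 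I would record this surrogate first-order condition first and invoke it everywhere below.

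With the surrogate first-order condition in hand, the $\ell_1$/$\ell_2$ bounds follow the template of Theorem~\ref{TheoEll12Meta}. First I would show $\|\nutil\|_2 \le 1$ by the same contradiction argument: if $\|\nutil\|_2 > 1$, combine~\eqref{EqnL2Bd}, the surrogate condition at $\beta = \betastar$, and H\"older's inequality, bounding $\|\nabla \rhotil_\lambda(\betatil)\|_\infty \le \lambda L$ via (i) and $\|\nabla \Loss_n(\betastar)\|_\infty \le \tfrac{\lambda L}{2}$ via~\eqref{EqnLambdaChoice}, to get $\|\nutil\|_2 \le \tfrac{2R}{\alpha_2}\bigl(\tfrac{3\lambda L}{2} + \tau_2 \sqrt{\tfrac{\log p}{n}}\bigr) \le 1$, a contradiction. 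Then local RSC~\eqref{EqnLocalRSC} applies; feeding in the surrogate condition, the linearization from (iv), and the cone bound from (v), and bounding the combined perturbation coefficient $\|\nabla \Loss_n(\betastar)\|_\infty + 4R\tau_1 \tfrac{\log p}{n} \le \tfrac{3\lambda L}{4}$ (from the $\lambda$-choice, the sample-size hypothesis, and $\|\nutil\|_1 \le 2R$), I expect to arrive at $\bigl(\alpha_1 - \tfrac{\mu_1 + \mu_2}{2}\bigr)\|\nutil\|_2^2 \le \tfrac{7\lambda L}{4}\|\nutil_A\|_1 - \tfrac{\lambda L}{4}\|\nutil_{A^c}\|_1$. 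Nonnegativity of the left side yields the cone condition $\|\nutil_{A^c}\|_1 \le 7\|\nutil_A\|_1$; dropping the negative term and using $\|\nutil_A\|_1 \le \sqrt{k}\|\nutil\|_2$ gives the stated $\ell_2$-bound, and $\|\nutil\|_1 \le 8\|\nutil_A\|_1 \le 8\sqrt{k}\|\nutil\|_2$ gives the $\ell_1$-bound.

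For the prediction-error bound I would follow Theorem~\ref{TheoPredMeta}: starting again from the surrogate first-order condition and the linearization (iv), then applying (v) and $\|\nabla \Loss_n(\betastar)\|_\infty \le \tfrac{\lambda L}{2}$, I expect $\inprod{\nabla \Loss_n(\betatil) - \nabla \Loss_n(\betastar)}{\nutil} \le \tfrac{3\lambda L}{2}\|\nutil_A\|_1 - \tfrac{\lambda L}{2}\|\nutil_{A^c}\|_1 + \tfrac{\mu_1 + \mu_2}{2}\|\nutil\|_2^2 \le \tfrac{3\lambda L \sqrt{k}}{2}\|\nutil\|_2 + \tfrac{\mu_1+\mu_2}{2}\|\nutil\|_2^2$; substituting the $\ell_2$-bound just obtained reproduces the two displayed terms. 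The only bookkeeping beyond the original proofs is tracking $\mu_1$ and $\mu_2$ separately: they enter additively as $\mu_1 + \mu_2$, since (iv) contributes $\tfrac{\mu_1}{2}\|\nutil\|_2^2$ and (v) contributes $\tfrac{\mu_2}{2}\|\nutil\|_2^2$. No arithmetic step is harder than in the convex-majorizer case, so I anticipate essentially all of the conceptual effort lies in justifying the surrogate first-order condition described above.
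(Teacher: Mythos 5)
Your proposal is correct and follows essentially the same route as the paper's own (sketched) proof: the key step in both is the observation that, since $\rhotil_\lambda \ge \rho_\lambda$ with equality at $\betatil$, local optimality for $\Loss_n + \rho_\lambda$ transfers to $\Loss_n + \rhotil_\lambda$, so the first-order condition~\eqref{EqnFirstOrder} holds with $\rho_\lambda$ replaced by $\rhotil_\lambda$, after which the arguments of Theorems~\ref{TheoEll12Meta} and~\ref{TheoPredMeta} are replayed with condition (iv) supplying the linearization (contributing $\mu_1$) and condition (v) supplying the cone-type inequality (contributing $\mu_2$) in place of Lemma~\ref{LemRhoCond}(b) and Lemma~\ref{LemEll1Reg}. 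Your direct $\|\nutil_A\|_1 + \|\nutil_{A^c}\|_1$ splitting of the H\"older term is exactly the modification the paper's ``remainder of the proof is exactly as before'' intends (Lemma~\ref{LemRhoCond}(b) being unavailable for $\rhotil_\lambda$), and it reproduces the stated constants $7$, $28$, $21$, and $49$ precisely.
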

\begin{proof}
	
The proof is essentially the same as the proofs of
Theorems~\ref{TheoEll12Meta} and~\ref{TheoPredMeta}, so we only
mention a few key modifications here.  First note that any local
minimum $\betatil$ of the program~\eqref{EqnNonconvexReg} is a local
minimum of $\Loss_n + \rhotil_\lambda$, since
\begin{equation*}
\EmpLoss(\betatil) + \rhotil_\lambda(\betatil) = \EmpLoss(\betatil) +
\rho_\lambda(\betatil) \leq \EmpLoss(\beta) + \rho_\lambda(\beta) \le
\EmpLoss(\beta) + \rhotil_\lambda(\beta),
\end{equation*}
locally for all $\beta$ in the constraint set, where the first
inequality comes from the fact that $\betatil$ is a local minimum of
$\Loss_n + \rho_\lambda$, and the second inequality holds because
$\rhotil_\lambda$ upper-bounds $\rho_\lambda$. Hence, the first-order
condition~\eqref{EqnFirstOrder} still holds with $\rho_\lambda$
replaced by $\rhotil_\lambda$. Consequently, \eqref{EqnPlug} holds, as well.

Next, note that~\eqref{EqnRhotilConvex} holds as before,
with $\rho_\lambda$ replaced by $\rhotil_\lambda$ and $\mu$ replaced
by $\mu_1$. By condition (v) on $\rhotil_\lambda$, we then have~\eqref{EqnPrePreCone} with $\mu$ replaced by $\mu_1 +
\mu_2$. The remainder of the proof is exactly as before.
\end{proof}

Specializing now to the case of the capped-$\ell_1$ penalty, we have
the following lemma. For a fixed parameter $c \ge 1$, the
capped-$\ell_1$ penalty~\citep{ZhaZha12} is given by
\begin{align}
\label{EqnCapped}
\myrho(t) & \defn \min\left\{\frac{\lambda^2 c}{2}, \; \lambda
|t|\right\}.
\end{align}

\begin{mylemma}
\label{LemCapped}
The capped-$\ell_1$ regularizer~\eqref{EqnCapped} with parameter $c$
satisfies the conditions of Assumption~\ref{AsRhotil}, with $\mu_1 = 0$, $\mu_2 = \frac{1}{c}$, and $L = 1$.
\end{mylemma}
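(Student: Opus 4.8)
The plan is to first dispatch the easy structural conditions and then to build a local convex majorant of the capped-$\ell_1$ penalty tailored to the stationary point $\betatil$ at hand. Conditions (i)--(iii) of Assumption~\ref{AsRho} are immediate from the formula~\eqref{EqnCapped}: the penalty is even with $\myrho(0)=0$, it is nondecreasing in $|t|$, and the ratio $\myrho(t)/t$ equals $\lambda$ on $(0,\lambda c/2]$ and $\lambda^2 c/(2t)$ thereafter, hence is nonincreasing; the slope near the origin is $\lambda$, which forces $L=1$. The key object is the majorant $\rhotil_\lambda$, which I would define separably and in a $\betatil$-dependent way: on each coordinate $j$ with $|\betatil_j|\le \lambda c/2$ (the linear regime) take the coordinate function to be $\lambda|t|$, and on each coordinate with $|\betatil_j|>\lambda c/2$ (the flat regime) take it to be the constant $\lambda^2 c/2$. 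Each piece is convex and dominates the scalar capped penalty $\min\{\lambda^2 c/2,\,\lambda|t|\}$, and by construction each piece agrees with $\myrho$ at $\betatil_j$.

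Granting this construction, conditions (i)--(iv) of Assumption~\ref{AsRhotil} follow directly. Convexity of each coordinate piece yields condition (iv) with $\mu_1=0$, and differentiability/subdifferentiability everywhere is clear. Condition (ii), namely $\rhotil_\lambda\ge\myrho$ pointwise, holds coordinatewise since $\lambda|t|\ge\min\{\lambda^2 c/2,\lambda|t|\}$ and $\lambda^2 c/2\ge\myrho(t)$ for every $t$. Condition (iii) holds because the two pieces were chosen precisely to touch $\myrho$ at $\betatil_j$ in each regime. For condition (i), the subgradient of $\rhotil_\lambda$ at $\betatil_j$ has magnitude $\lambda$ on linear-regime coordinates and $0$ on flat-regime coordinates, so $\|\nabla\rhotil_\lambda(\betatil)\|_\infty\le\lambda=\lambda L$.

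The substantive step is condition (v), and I would take $A=\supp(\betastar)$ so that $|A|=k$. Writing $\nutil=\betatil-\betastar$ and using separability, I would split $\rhotil_\lambda(\betastar)-\rhotil_\lambda(\betatil)$ coordinatewise into three groups: indices in $A$; indices of $A^c$ in the linear regime; and indices of $A^c$ in the flat regime. On $A$ each coordinate function is $\lambda$-Lipschitz, giving a contribution at most $\lambda\|\nutil_A\|_1$, matching the $+\lambda L\|\nutil_A\|_1$ term. On the linear-regime coordinates of $A^c$ we have $\betastar_j=0$, so the contribution is exactly $-\lambda|\nutil_j|$, matching the $-\lambda L\|\nutil_{A^c}\|_1$ term. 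The genuine obstruction is the flat-regime part of $A^c$: there $\betastar_j=0$ while $|\betatil_j|=|\nutil_j|>\lambda c/2$, the coordinate contribution to $\rhotil_\lambda(\betastar)-\rhotil_\lambda(\betatil)$ vanishes, yet the target right-hand side charges $-\lambda|\nutil_j|$. I would absorb this deficit into the quadratic slack $\tfrac{\mu_2}{2}\|\nutil\|_2^2$: because $|\nutil_j|>\lambda c/2$ on exactly these coordinates, the quadratic term controls $\lambda|\nutil_j|$, and comparing the two against the threshold $\lambda c/2$ is precisely what fixes the admissible nonconvexity constant $\mu_2$ (with $\mu_1=0$ and $L=1$). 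This flat-regime-outside-the-support accounting is where the true nonconvexity of the capped penalty is paid for, and so it is the main obstacle; carefully tracking the constant here is what produces the stated $\mu_2=1/c$. As a safeguard one may instead take $A$ to be the index set of the $k$ largest entries of $\nutil$, as in Lemma~\ref{LemEll1Reg}, which routes the large flat-regime coordinates into $A$ and lets one mirror that lemma's support-reduction argument when sharpening the constant.
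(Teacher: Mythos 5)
Your construction is exactly the paper's: the paper also builds the majorant coordinate-by-coordinate, taking $\lambda|t|$ on coordinates with $|\betatil_j| \le \lambda c/2$ and the constant $\lambda^2 c/2$ on the rest, verifies conditions (i)--(iv) of Assumption~\ref{AsRhotil} just as you do, and takes $A = S$, the support of $\betastar$, in condition (v). The one genuine gap is the step you leave open, and it cannot be closed: your accounting does not give $\mu_2 = 1/c$. On a flat-regime coordinate $j$ off the support, the deficit is $\lambda|\nutil_j|$ and the only available information is $|\nutil_j| > \lambda c/2$; the requirement $\lambda|\nutil_j| \le \frac{\mu_2}{2}\nutil_j^2$ forces $\frac{\mu_2}{2} \ge \lambda/|\nutil_j|$, whose supremum over $|\nutil_j| > \lambda c/2$ is $2/c$. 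So your argument proves the lemma with $\mu_2 = 4/c$ (sharp for this accounting), not $1/c$. Your safeguard of routing large coordinates into $A$ does not rescue the constant, because $|A| \le k$: take $k = 1$, $\betastar = (a,0,0,0)$ with $0 < a \le \lambda c/2$, and $\betatil = (a,b,b,b)$ with $b = 0.6\lambda c$. Then $\rhotil_\lambda(\betastar) - \rhotil_\lambda(\betatil) = 0$ for your majorant, while for every $A$ with $|A| \le 1$ the right-hand side of condition (v) with $\mu_2 = 1/c$ is at most $-\lambda b + \frac{3b^2}{2c} = -0.1\lambda b < 0$. (No admissible majorant does better here: any convex $f \ge \myrho$ with $f(\betatil) = \myrho(\betatil)$ has $f(\betastar) \ge f(\betatil)$, since $\psi(t) \defn f\bigl((a,t,t,t)\bigr)$ is convex, equals $\myrho(\betatil)$ at $t = b$, and is at least $\myrho(\betatil)$ for all $t \ge \lambda c/2$, which rules out $\psi(0) < \psi(b)$.)

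The discrepancy traces to the paper, not to a missing idea on your part. At this step the paper writes
\begin{equation*}
\rhotil_\lambda(\betastar) - \rhotil_\lambda(\betatil) = \sum_{j \in S}\bigl(\rhotil_\lambda(\betastar_j) - \rhotil_\lambda(\betatil_j)\bigr) - \sum_{j \notin S}\rhotil_\lambda(\betatil_j),
\end{equation*}
thereby crediting $-\lambda^2 c/2$ for each flat coordinate off the support, and then absorbs $\sum_{j \notin S}\bigl(\lambda|\betatil_j| - \myrho(\betatil_j)\bigr) \le \frac{1}{c}\|\nutil_{S^c}\|_2^2$ via the pointwise bound $\lambda|t| - \myrho(t) \le t^2/c$. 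But this identity presumes that every coordinate function of $\rhotil_\lambda$ vanishes at zero, which is false on flat coordinates, where that function is the constant $\lambda^2 c/2$; the correct contribution of such a coordinate is $0$, exactly as in your decomposition. In short, your proof is the corrected form of the paper's argument and establishes the lemma with $\mu_2 = 4/c$ in place of $1/c$; this weakening only affects absolute constants in Theorem~\ref{TheoEll12Meta2}.
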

	
\begin{proof}
We will show how to construct an appropriate choice of
$\rhotil_\lambda$. Note that $\myrho$ is piecewise linear and locally
equal to $|t|$ in the range $\left[-\frac{\lambda c}{2}, \frac{\lambda
    c}{2}\right]$, and takes on a constant value outside that
region. However, $\myrho$ does not have either a gradient or
subgradient at $t = \pm \frac{\lambda c}{2}$, hence is not
``convexifiable'' by adding a squared-$\ell_2$ term.

We begin by defining the function $\rhotil: \real \rightarrow \real$
via
\begin{equation*}
\rhotil_\lambda(t) =
\begin{cases}
\lambda |t|, & \mbox{if} \quad |t| \le \frac{\lambda c}{2}, \\
\frac{\lambda^2 c}{2}, & \mbox{if} \quad |t| > \frac{\lambda c}{2}.
\end{cases}
\end{equation*}
For a fixed local optimum $\betatil$, note that we have
$\rhotil_\lambda(\beta) = \sum_{j \in T} \lambda |\betatil_j| +
\sum_{j \in T^c} \frac{\lambda^2 c}{2}$, where \mbox{$T \defn \left\{j
  \, \mid |\betatil_j| \le \frac{\lambda c}{2}\right\}$.} Clearly, $\rhotil_\lambda$ is a convex upper bound on
$\rho_\lambda$, with \mbox{$\rhotil_\lambda(\betatil) =
  \rho_\lambda(\betatil)$.} Furthermore, by the convexity of
$\rhotil_\lambda$, we have
\begin{align}
\label{EqnPolingBicyling}
\inprod{\nabla \rhotil_\lambda(\betatil)}{\betastar - \betatil} & \le
\rhotil_\lambda(\betastar) - \rhotil_\lambda(\betatil) \; = \; \sum_{j
  \in S} \left(\rhotil_\lambda(\betastar_j) -
\rhotil_\lambda(\betatil_j)\right) - \sum_{j \notin S}
\rhotil_\lambda(\betatil_j),
\end{align}
using decomposability of $\rhotil$.  For $j \in T$, we have
\begin{equation*}
	\rhotil_\lambda(\betastar_j) - \rhotil_\lambda(\betatil_j) \le \lambda
|\betastar_j| - \lambda |\betatil_j| \le \lambda |\nutil_j|,
\end{equation*}
whereas
for $j \notin T$, we have $\rhotil_\lambda(\betastar_j) -
\rhotil_\lambda(\betatil_j) = 0 \le \lambda |\nutil_j|$.  Combined
with the bound~\eqref{EqnPolingBicyling}, we obtain
\begin{align}
\label{EqnRhotilGrad}
\inprod{\nabla \rhotil_\lambda(\betatil)}{\betastar - \betatil} & \le
\sum_{j \in S} \lambda |\nutil_j| - \sum_{j \notin S}
\rhotil_\lambda(\betatil_j) \nonumber \\
& = \lambda \|\nutil_S\|_1 - \sum_{j
  \notin S} \myrho(\betatil_j) \nonumber \\
& = \lambda \|\nutil_S\|_1 - \lambda \|\nutil_{S^c}\|_1 + \sum_{j
  \notin S} \left(\lambda |\betatil_j| - \myrho(\betatil_j)\right).
\end{align}
Now observe that
\begin{equation*}
\lambda |t| - \myrho(t) = \begin{cases} 0, & \quad \text{if} \quad |t|
  \le \frac{\lambda c}{2}, \\
	\lambda |t| - \frac{\lambda^2 c}{2}, & \quad \text{if} \quad |t| > \frac{\lambda c}{2},
\end{cases}
\end{equation*}
and moreover, the derivative of $\frac{t^2}{c}$ always exceeds
$\lambda$ for $|t| > \frac{\lambda c}{2}$.  Consequently, we have
$\lambda |t| - \myrho(t) \le \frac{t^2}{c}$ for all $t \in \real$.
Substituting this bound into~\eqref{EqnRhotilGrad} yields
\begin{equation*}
\inprod{\nabla \rhotil_\lambda(\betatil)}{\betastar - \betatil} \le
\lambda \|\nutil_S\|_1 - \lambda \|\nutil_{S^c}\|_1 + \frac{1}{c}
\|\nutil_{S^c}\|_2^2,
\end{equation*}
which is condition (v) of Assumption~\ref{AsRhotil} on
$\rhotil_\lambda$ with $L = 1$, $A = S$, and $\mu_2 =
\frac{1}{c}$. The remaining conditions are easy to verify (see also~\cite{ZhaZha12}).
\end{proof}


\bibliography{refs.bib}

\end{document}